\PassOptionsToPackage{style=alphabetic,backend=biber,maxbibnames=99, maxcitenames=99, url=false}{biblatex} 
\PassOptionsToPackage
{left=2.5cm,
	right=2.5cm,
	top=1cm,
	bottom=1cm,
	footskip=18pt,
	includehead,
	includefoot,
	headsep=12pt}
{geometry} 
\documentclass[reqno,11pt]{article}
\usepackage{amsthm}
\usepackage{amsfonts}
\usepackage[colorlinks=true,linkcolor=blue,citecolor=blue]{hyperref}
\usepackage{biblatex}
\usepackage{geometry} 
\usepackage{stmaryrd}
\usepackage{longtable}
\usepackage{amssymb}
\usepackage{amsmath}
\usepackage{graphicx}
\usepackage{eucal, bbm}
\usepackage{enumitem}
\usepackage{dsfont}
\usepackage{longtable}
\usepackage{array}

\numberwithin{equation}{section}
\theoremstyle{plain}

\newtheorem{theorem}{Theorem}[section]
\newtheorem{corollary}[theorem]{Corollary}
\newtheorem{problem}[theorem]{Problem}
\newtheorem{lemma}[theorem]{Lemma}
\newtheorem{proposition}[theorem]{Proposition}

\newtheorem{assumption}[theorem]{Assumption}
\newtheorem{definition}[theorem]{Definition}
\newtheorem{remark}[theorem]{Remark}

\theoremstyle{remark}
\newtheorem{example}[theorem]{Example}

\renewcommand{\d}{{\mathrm d}}

\newcommand{\restr}[1]{\lower3pt\hbox{$|_{#1}$}}

\newcommand{\mres}{\mathbin{\vrule height 1.6ex depth 0pt width 0.13ex\vrule height 0.13ex depth 0pt width 1.3ex}}

\newcommand{\nchi}{{\raise.3ex\hbox{$\chi$}}}

\newcommand{\B}{\mathbb{B}}

\newcommand{\bbM}{\mathbb{M}}
\newcommand{\N}{\mathbb{N}}

\renewcommand{\P}{\mathbb{P}}

\newcommand{\R}{\mathbb{R}}

\newcommand{\W}{\mathbb{W}}
\newcommand{\X}{\mathbb{X}}
\newcommand{\Y}{\mathbb{Y}}
\newcommand{\Z}{\mathbb{Z}}

\newcommand{\XX}{\mathscr{X}}

\newcommand{\cA}{{\ensuremath{\mathcal A}}}
\newcommand{\cB}{{\ensuremath{\mathcal B}}}
\newcommand{\calC}{{\ensuremath{\mathcal C}}}

\newcommand{\cF}{{\ensuremath{\mathcal F}}}

\newcommand{\cH}{{\ensuremath{\mathcal H}}}

\newcommand{\cP}{{\ensuremath{\mathcal P}}}

\newcommand{\cT}{{\ensuremath{\mathcal T}}}

\newcommand{\mmu}{{\boldsymbol \mu}}

\newcommand{\sfg}{{\mathsf g}}

\newcommand{\sfw}{{\mathsf W}}

\newcommand{\sfC}{{\mathsf C}}

\newcommand{\sfW}{{\mathcal W}}

\newcommand{\frg}{{\mathfrak  g}}

\newcommand{\frP}{{\mathfrak  P}}

\newcommand{\rmd}{{\mathrm d}}

\newcommand{\rmt}{{\mathrm t}}

\newcommand{\rmA}{{\mathrm A}}
\newcommand{\rmB}{{\mathrm B}}

\newcommand{\rmF}{{\mathrm F}}

\newcommand{\rmH}{{\mathrm H}}

\newcommand{\rmN}{{\mathrm N}}

\newcommand{\rmQ}{{\mathrm Q}}

\newcommand{\rmT}{{\mathrm T}}

\newcommand{\tta}{{\mathtt a}}

\newcommand{\ttc}{{\mathtt c}}

\newcommand{\ttm}{{\mathtt m}}

\newcommand{\ttt}{{\mathtt t}}

\newcommand{\ttA}{{\mathtt A}}

\newcommand{\ttG}{{\mathtt G}}

\newcommand{\ttM}{{\mathtt M}}
\newcommand{\ttN}{{\mathtt N}}

\newcommand{\ttP}{{\mathtt P}}

\renewcommand{\P}{\ensuremath{\mathbb{M}}}

\title{Optimal transport between laws of random probability measures and the strict Monge problem
}
\begin{document}
	

	\author{
		Alessandro Pinzi\thanks{Bocconi University, Department of Decision Sciences, via Roentgen 1, 20136 Milano (Italy). Email:\textsf{alessandro.pinzi@phd.unibocconi.it}. Orcid: \textsf{https://orcid.org/0009-0007-9146-5434}}
	}
	
	
	\maketitle

	\begin{abstract}
		We consider an optimal transport problem between laws of random probability measures: given a base cost function,
		we build the associated OT cost between probability measures that in turn we
		use to define the OT cost between probability measures over probability measures. This setting admits a finer reformulation in terms of laws of random couplings, which retain more information than ordinary couplings. One of the main contributions of the paper is the characterization of the optimal ones in terms of Kantorovich potentials.
		
		Similarly, we also introduce the strict Monge problem, whose admissible competitors are more restrictive than in the usual Monge formulation. In this setting, we will
		give sufficient conditions under which the value of this problem is the same
		as the one considered above, in the spirit of the result by A. Pratelli. Then, for $p>1$,
		when the underlying cost is the distance to the power $p$ in a strictly convex Banach space, we will
		give sufficient conditions under which the optimal random coupling is unique
		and induced by a solution of the strict Monge problem, resembling the Brenier
		theorem.
	\end{abstract}
	
	{\small
		\noindent {\bf Keywords: }optimal transport, random measures, Lions' lifting, Monge problem.
		
		\par
		\noindent {\bf 2020 MSC:} 49Q22, 28A33, 60B05.
		
	}
	
	{\small\tableofcontents}
	
	\newcommand{\mduality}[2]{[#1,#2]}
	\newcommand{\rW}[1]{\cP_{#1}(\rmH)}
	\newcommand{\cPP}{\cP\kern-4.5pt\cP}
	\newcommand{\RW}[1]{\cP\kern-4.5pt\cP_{#1}(\rmH)}
	\newcommand{\RWfin}[1]{\ensuremath{\cP\kern-4.5pt\cP_{#1}(\R^d)}}
	\newcommand{\RWr}[1]{\cP\kern-4.5pt\cP_{#1}^{r\kern-1pt r}(\rmH)}
	\newcommand{\RWrd}[1]{\ensuremath{\cP\kern-4.5pt\cP_{#1}^{r\kern-1pt r}(\R^d)}}
	\newcommand{\RWru}[1]{\ensuremath{\cP\kern-4.5pt\cP_{#1}^{r\kern-1pt r}(\R)}}
	\newcommand{\RWgr}[1]{\cP\kern-4.5pt\cP_{#1}^{gr\kern-1pt r}(\rmH)}
	\newcommand{\RWgrd}[1]{\ensuremath{\cP\kern-4.5pt\cP_{#1}^{g\kern-1pt r\kern-1pt r}(\R^d)}}
	\newcommand{\RWW}[1]{\cP\kern-4.5pt\cP_{#1}(\rmH\times\rmH)}
	\newcommand{\RWWdet}[1]{\cP\kern-4.5pt\cP_{#1}^{\rm det} (\rmH\times\rmH)}
	\newcommand{\msc}[2]{[#1,#2]}
	\newcommand{\mmsc}[2]{[\kern-2pt[#1,#2]\kern-2pt]}
	\newcommand{\shto}{\shortrightarrow}
	\newcommand{\Rinf}{\ensuremath{\R\cup\{\infty\}}}
	\newcommand{\bpartial}{\boldsymbol{\partial}}
	\newcommand{\bpartialt}{\boldsymbol{\partial}_{\rmt}}
	\newcommand{\Ast}{{\mathop{\scalebox{1}{\raisebox{-0.2ex}{$\ast$}}}}}%
	\newcommand{\RGamma}{\mathrm{R}\Gamma}
	\newcommand{\lift}[1]{#1^\ell}
	\newcommand{\cPdet}[1]{{\cP^{\rm det}_{#1}}}
	\newcommand{\LF}{\hat{\brmF}}
	\newcommand{\bttP}{{\boldsymbol{\mathtt P}}}

	\renewcommand{\X}{\mathrm{X}}
	\renewcommand{\Y}{\mathrm{Y}}
	\renewcommand{\B}{\mathrm{B}}
	\renewcommand{\W}{W}
	\renewcommand{\Z}{Z}
	\renewcommand{\XX}{\mathcal{X}}
	
	\newcommand{\C}{\mathsf{C}}
	\newcommand{\M}{\ttM}
	\newcommand{\m}{\ttm}
	\newcommand{\n}{\ttm}
	\newcommand{\PP}{\mathcal{P}}
	\renewcommand{\mmu}{\mu}

	\newcommand{\cost}{\mathop{\mathsf{c}}\nolimits}
	
	\newcommand{\Law}{\operatorname{Law}}
	
	\newcommand{\PPdiff}{\PP^{\operatorname{diff}}}
	
	\newcommand{\PPdet}{\PP^{\operatorname{det}}}

	\section{Introduction}
	In this paper, we study an Optimal Transport problem between laws of random probability measures in an iterated fashion: given a cost function $\cost\colon\X_1\times \X_2 \to [0,+\infty]$, we associate to it the optimal transport cost $\C\colon\PP(\X_1)\times \PP(\X_2) \to [0,+\infty]$ defined as 
	\begin{equation}
		\C(\mu_1,\mu_2) := \min \left\{ \int_{\X_1\times \X_2} \cost(x_1,x_2) d\pi(x_1,x_2) \ : \ \pi \in \Gamma(\mu_1,\mu_2) \right\},
	\end{equation}
	and we use it again to define an OT cost $\calC\colon \PP(\PP(\X_1)) \times \PP(\PP(\X_2)) \to [0,+\infty]$ as
	\begin{equation}\label{eq: OTRM intro}
		\calC(\ttM_1,\ttM_2):=\min \left\{ \int_{\PP(\X_1)\times \PP(\X_2)} \C(\mu_1,\mu_2) d\Pi(\mu_1,\mu_2) \ : \ \Pi \in \Gamma(\M_1,\M_2) \right\}.
	\end{equation}    
	We will mainly work in the setting in which $\X_1$ and $\X_2$ are Polish spaces and $\cost$ is lower semicontinuous, sometimes treating separately the case in which $\cost$ is continuous. With $\Gamma(\mu_1,\mu_2)$ we denote the set of transport couplings, see also Section \ref{subsec: C-conc theory}. Note that we will always write that the minimum is attained even in the case in which the value is $+\infty$, taking care in saying when it is finitely valued.
	
	Clearly, the general theory already gives a characterization of the optimal transport plans in terms of the optimal dual potentials. However, in this setting \eqref{eq: OTRM intro} can be reformulated as
	\begin{equation}\label{eq: OTRM rand intro}
		\calC(\M_1,\M_2) = \min\left\{  \int_{\PP(\X_1\times \X_2)} \int_{\X_1\times \X_2} \cost(x_1,x_2) d\pi(x_1,x_2) d\ttP(\pi) \ : \ \ttP \in \RGamma  (\M_1,\M_2) \right\},
	\end{equation}
	where $\RGamma(\ttM_1,\ttM_2)$ is the set of \textit{(laws of) random couplings}, that are measures $\ttP\in \PP(\PP(\X_1\times \X_2))$ such that the double push-forward of the projections $\operatorname{pr}^i:\X_1\times \X_2 \to \X_i$, for $i=1,2$, returns the given random measures:
	\[\operatorname{pr}^1_{\sharp \sharp}\ttP = \ttM_1, \quad \operatorname{pr}^2_{\sharp\sharp} \ttP = \ttM_2.\]
	
	\begin{example}\label{ex: wass}
		A natural example of this construction is the Wasserstein-on-Wasserstein distance: let $(\X,\rmd)$ be a complete and separable metric space. In the previous setting, consider $\X_1 = \X_2 = \X$ and $\cost(x_1,x_2) = \rmd^p(x_1,x_2)$ for a fixed $p\in[1,+\infty)$. Then $\C(\mu_1,\mu_2) = \sfw_{p,\rmd}^p(\mu_1,\mu_2)$ is the $p$-th power of the $L^p$-Wasserstein distance over $\PP(\X)$, in the extended sense since we are not yet restricting to measures with finite $p$-moment and the value can be $+\infty$. Then, $\calC(\ttM_1,\ttM_2) = \sfW_{p,d}^p(\ttM_1,\ttM_2)$ is the $p$-th power of the (extended) Wasserstein-on-Wasserstein distance, called also $L^p$-Wasserstein distance on random measures. See also \ref{subsec: wass}.
	\end{example}
	
	In general, random couplings contain more information than couplings: by a measurable selection argument, we can show that the map 
	\begin{equation}
		(\operatorname{pr}^1_\sharp, \operatorname{pr}^2_\sharp)_\sharp \colon \RGamma(\ttM_1,\ttM_2) \to \Gamma(\ttM_1,\ttM_2)
	\end{equation}
	is surjective, but in general is not bijective. For example, if $\ttM_1 = \delta_{\mu_1}$ and $\ttM_2=\delta_{\mu_2}$ for some $\mu_1\in \PP(\X_1)$ and $\mu_2 \in \PP(\X_2)$, then $\Gamma(\ttM_1,\ttM_2)$ is a singleton, but $\RGamma(\ttM_1,\ttM_2)$ contains $\delta_\pi$ for all $\pi\in \Gamma(\mu_1,\mu_2)$. For more details, see Lemma \ref{lemma: rand coupl}. Thus, in this setting, understanding the structure of optimal random couplings will lead to a better understanding of the problem, rather than simply applying the general theory to \eqref{eq: OTRM intro}.
	
	An example of random couplings are the \textit{fully deterministic} ones, that are induced by maps of the form $\rmT:\X_1\times \PP(\X_1) \to \X_2$. Indeed, if $\rmT$ satisfies $\cT_\sharp \ttM_1 = \ttM_2$, with $\cT(\mu_1):= \rmT(\cdot, \mu_1)_\sharp \mu_1$, then 
	\begin{equation}\label{eq: repr fully intro}
		\ttP:= \int_{\PP(\X_1)} \delta_{(\operatorname{id},\rmT(\cdot,\mu_1))_\sharp\mu_1} d\ttM_1(\mu_1) \in \RGamma(\ttM_1,\ttM_2).
	\end{equation}
	
	Our first goal is to characterize optimal random couplings through a suitable extension of $\C$-cyclical monotonicity and $\C$-concavity. Our second goal is to introduce the \textit{strict Monge problem} using fully deterministic competitors, and to understand when it is exact and when optimal random couplings are induced by measurable maps.
	
	\bigskip
	
	\noindent\textbf{Random couplings and Kantorovich potentials.} A crucial turning point in OT theory is \textit{duality}. In the setting of \eqref{eq: OTRM intro}, the standard theory tells us that 
	\[
	\calC(\ttM_1,\ttM_2) = \sup \left\{ \int_{\PP(\X_1)} \phi_1 \, d\ttM_1 + \int_{\PP(\X_2)} \phi_2 \, d\ttM_2 \;:\; \phi_1(\mu_1)+\phi_2(\mu_2)\le \sfC(\mu_1,\mu_2) \right\}.
	\]
	From now on, we impose the following assumption: for some Borel functions $\tta_i\colon \X_i \to [0,+\infty]$
	\begin{equation}\label{eq: as intro}
		\cost(x_1,x_2)\leq \tta_1(x_1) + \tta_2(x_2) \quad \text{ and } \int_{\PP(\X_i)}\int_{\X_i} \tta_i(x_i) d\mu_i(x_i) d\ttM_i(\mu_i)<+\infty \quad \text{for }i=1,2.
	\end{equation}
	Define also $\PP_{\tta_i}(\X_i) := \{\mu_i \in \PP(\X_i) : \tta_i \in L^1(\X_i,\mu_i)\}$.
	Then, $\sfC(\mu_1,\mu_2)<+\infty$ for all $(\mu_1,\mu_2)\in \PP_{\tta_1}(\X_1) \times \PP_{\tta_2}(\X_2)$ and $\calC(\ttM_1,\ttM_2)<+\infty$, so that we can apply the general theory for optimal transport: the maximum is attained by a pair of functions $(\phi_1,\phi_2)\in L^1(\ttM_1)\times L^1(\ttM_2)$ satisfying that $\phi_1$ is $\sfC$-concave and $\phi_2$ is the $\sfC$-transform of $\phi_1$. 
	
	\begin{example}\label{ex: wass 2}
		In the Wasserstein case of Example \ref{ex: wass}, a natural choice is $\tta_1(x) = \tta_2(x) := 2^{p-1}\rmd^p(x,x_0)$ for some fixed point $x_0\in \X$. This choice gives $\PP_{\tta_1}(\X) = \PP_{\tta_2}(\X) = \PP_p(\X)$, and \eqref{eq: as intro} is equivalent to $\ttM_i \in \PP_p(\PP_p(\X))$. Moreover, in this particular case we have that $(\PP_p(\X),\sfw_{p,\rmd})$ and $(\PP_p(\PP_p(\X)),\sfW_{p,\rmd})$ are complete and separable metric spaces (see \ref{subsec: wass}). 
	\end{example}
	
	At this point, recall that, for couplings $\Pi \in \Gamma(\ttM_1,\ttM_2)$, optimality is characterized by looking closer at the theory of $\C$-concave functions. Indeed, the following are equivalent:
	\begin{enumerate}
		\item $\Pi \in \Gamma_{o,\C}(\ttM_1,\ttM_2)$;
		\item $\Pi$ is concentrated on a $\C$-cyclically monotone subset of $\PP_{\tta_1}(\X_1) \times \PP_{\tta_2}(\X_2)$ (see Definition \ref{def:C-conc});
		\item there exists $\phi_1:\PP_{\tta_1}(\X_1) \to [-\infty,+\infty)$ such that $\Pi$ is concentrated on $\partial_{\C}^+\phi_1$ (see Definition \ref{def:C-conc}).
	\end{enumerate}
	In particular, the equivalence between 2. and 3. is given by the celebrated Rockafellar--Ruschendorf theorem, that states that every $\C$-cyclically monotone subset is contained in the $\C$-superdifferential of some $\C$-concave function (see Theorem \ref{thm:RR}).
	
	One of the main results of this paper (see Theorem \ref{thm: char opt RGamma}) is a similar characterization for optimal random couplings. We will introduce the notion of \textit{total} $\sfC$\textit{-cyclically monotone} subsets $\rmF \subset \PP_{\tta_1\oplus \tta_2}(\X_1\times \X_2)$ (see Definition \ref{def: total cycl mon}) and of \textit{total} $\sfC$\textit{-superdifferential} $\partial_{\ttt,\sfC}^+\phi_1 \subset \PP_{\tta_1\oplus\tta_2}(\X_1\times \X_2)$ of a function $\phi_1\colon\PP_{\tta_1}(\X_1) \to[-\infty,+\infty)$ (see Definition \ref{def: total super}), and we will prove that, given $\ttP \in \RGamma(\ttM_1,\ttM_2)$, the following are equivalent:
	\begin{enumerate}
		\item $\ttP$ is optimal;
		\item $\ttP$ is concentrated on a total $\C$-cyclically monotone set;
		\item there exists $\phi_1:\PP_{\tta_1}(\X_1) \to [-\infty,+\infty)$ $\C$-concave such that $\ttP$ is concentrated on $\partial^+_{\ttt,\C}\phi_1$. 
	\end{enumerate}
	Moreover, we also have the `total' counterpart of Rockafellar--Ruschendorf theorem, showing that any total $\C$-cyclically monotone subset is contained on the total $\C$-superdifferential of some $\C$-concave function (see Proposition \ref{prop: char tot mon}).

	This result extends \cite[Section 4 \& 5]{PS25convex} (see also \cite{beiglbock2025brenier}), where the case $\X_1 = \X_2 = \rmH$ separable Hilbert space, with $\cost(x_1,x_2) = |x_1-x_2|^2$, was treated. However, in this specific setting, the author with G. Savar\'e proved the result heavily relying on the Lions' lifting $L^2(\Omega,\mathcal{F},\mathbb{\P};\rmH)$ of $\PP_2(\rmH)$, while in Section \ref{sec: OT gen cost} we propose a novel strategy of proof that is intrinsic and does not rely on any kind of representations. 
	
	Anyway, in Section \ref{sec: Lions}, we show that a lifting procedure can be performed in this setting as well. This is interesting on its own, and gives also a different proof of the results in Section \ref{sec: OT gen cost}, that is indeed similar to the one proposed in \cite{PS25convex}.
	
	\bigskip

	\noindent \textbf{Lions' lifting.} Assume that $\cost:\X_1\times \X_2 \to[0,+\infty]$ is continuous. Following \cite{CSS2,PS25convex}, let $(\rmQ,\cF_{\rmQ}, \bbM)$ be a fixed standard Borel atomless probability space. Define the law maps
	\begin{equation} \label{eq:lift intro}
		\begin{aligned}
			\iota_1\colon \XX_1 & \to \PP(\X_1), \qquad \quad
			\iota_2\colon \XX_2  \to \PP(\X_2), \qquad \quad
			\iota_{1,2}\colon \XX_1\times \XX_2 \to \PP(\X_1\times \X_2), \qquad
			\\
			\Z_1 & \mapsto (\Z_1)_\sharp \bbM 
			\hspace{1.73cm} \Z_2 \mapsto (\Z_2)_\sharp \bbM
			\hspace{1.95cm} (\Z_1,\Z_2) \mapsto (\Z_1,\Z_2)_\sharp \bbM.
		\end{aligned}
	\end{equation}
	where $\XX_i:=L^0\big( \rmQ, \cF_{\rmQ},\bbM; \X_i \big)$ is the set of measurable random variables (quotiented by the $\bbM$-a.e. equivalence relation) endowed with the topology of the convergence in probability. We can use these maps to lift our OT problem: define 
	\[\hat{\C}: \XX_1 \times \XX_2 \to [0,+\infty], \quad \hat{\C}(\Z_1,\Z_2):= \int_{\rmQ} \cost\big(\Z_1(q),\Z_2(q) \big) d\bbM(q),\]
	and use it to define the optimal transport problem 
	\[\hat{\calC}(\ttm_1,\ttm_2):= \min_{\frP\in\Gamma(\ttm_1,\ttm_2)} \left\{ \int_{\XX_1\times \XX_2} \hat{\C}(\Z_1,\Z_2) d\frP(\Z_1,\Z_2) \right\} \qquad \forall \ttm_i \in \PP(\XX_i).\]
	
	We can relate $\sfC$-concavity and $\hat{\C}$-concavity: using the notation of \eqref{eq: as intro}, we can define $\XX_{i,\tta_i} := \{\Z_i \in \XX_i : \int_{\rmQ} \tta_i(\Z_i(q)) d\bbM(q)<+\infty\}$, for $i=1,2$, which naturally restrict \eqref{eq:lift intro} to
	\begin{equation}\label{eq:lift intro tta}
		\begin{aligned}
			\iota_1\colon \XX_{1,\tta_1} & \to \PP_{\tta_1}(\X_1), \qquad \quad
			\iota_2\colon \XX_{2,\tta_2}  \to \PP_{\tta_2}(\X_2), \qquad \quad
			\iota_{1,2}\colon \XX_{1,\tta_1} \times \XX_{2,\tta_2} \to \PP_{\tta_1\oplus \tta_2}(\X_1\times \X_2).
		\end{aligned}
	\end{equation}
	At this point, we will show that a function $\phi_1:\PP_{\tta_1}(\X_1) \to [-\infty,+\infty)$ is $\C$-concave if and only if $\hat{\phi}_1:= \phi_1 \circ \iota_1 \colon \XX_{1,\tta_1} \to [-\infty,+\infty)$ is $\hat{\C}$-concave, and a similar statements holds for total $\C$-cyclical monotonicity (resp. total $\C$-superdifferential) and $\hat{\C}$-cyclical monotonicity (resp. $\hat{\C}$-superdifferential). These results are proven independently of the ones presented above and as a byproduct they give an alternative proof for the characterization of optimal random couplings.
	
	\begin{example}\label{ex: wass 3}
		In the context of Examples \ref{ex: wass} and \ref{ex: wass 2}, since $\X_1 = \X_2 = \X$ and $\tta_1 = \tta_2$, we only have one lifted space $\XX_{1,\tta_1} = \XX_{2,\tta_2} = \XX_p := L^p(\rmQ,\mathcal{F}_\rmQ,\bbM; \X)$, that is also a complete and separable metric space when endowed with the $L^p$-distance.
	\end{example}

	It is important to note that, as it is presented here, this strategy only works in the case that $\cost$ is continuous (still possibly infinitely valued). For the general case, we need to consider a different lifting procedure, see \eqref{eq:82} and \eqref{eq:83}, and then the same approach works.
	All the details about it are presented in Section \ref{sec: Lions}.

	\normalcolor
	
	\bigskip
	
	\noindent\textbf{Strict Monge formulation.} The usual Monge formulation that can be naturally associated to the OT cost $\calC$ is
	\begin{equation}\label{eq: monge cost intro}
		\calC_{M}(\ttM_1,\ttM_2):=\inf\left\{\int_{\PP(\X_1)} \C(\mu_1, \cT(\mu_1)) d\ttM_1(\mu_1)  \,: \, \cT\colon\PP(\X_1) \to \PP(\X_2) \text{ Borel s.t. }\cT_\sharp \ttM_1 = \ttM_2\right\}.
	\end{equation}
	However, this setting allows us to introduce a stricter version of it: taking a competitor for \eqref{eq: monge cost intro}, a natural question is whether the optimal cost $\sfC$ between $\mu_1$ and $\cT(\mu_1)$ can be realized, or at least approximated, by an optimal map $\rmT_{\mu_1} \colon \X_1 \to \X_2$. This argument leads to the \textit{strict Monge problem}:	
	\begin{equation}\label{eq: strict cost intro}
		\begin{aligned}
			\calC_{sM}(\ttM_1,\ttM_2):=\inf\Bigg\{\int_{\PP(\X_1)} \int_{\X_1} \cost(x_1,\rmT(x_1,\mu_1)) d\mu_1(x_1) d\ttM_1(\mu_1) \, : \quad
			\\
			\rmT:\X_1 \times \PP(\X_1) \to \X_2 \text{ Borel,\  }\cT_\sharp \ttM_1 = \ttM_2\Bigg\},
		\end{aligned}
	\end{equation}
	where in \eqref{eq: strict cost intro} the map $\cT\colon \PP(\X_1)\to\PP(\X_2)$ is defined as $\cT(\mu_1):= \rmT(\cdot,\mu_1)_\sharp \mu_1$. In general, not all the maps $\cT:\PP(\X_1) \to \PP(\X_2)$ can be represented in this way (see \cite{LS25}), and this gives that the problem \eqref{eq: strict cost intro} is usually defined on a more restrictive class than \eqref{eq: monge cost intro}. 
	
	Recall that, according to formula \eqref{eq: repr fully intro}, any competitor for the strict Monge formulation induces a fully deterministic random coupling.
	This shows the trivial inequalities $\calC \leq \calC_{M} \leq \calC_{sM}$.
	At this point, it is natural to ask: when $\calC(\ttM_1,\ttM_2) = \calC_{sM}(\ttM_1,\ttM_2)$? 
	Inspired by the general results by A. Pratelli \cite{pratelli2007equality}, we prove the following result (see Theorems \ref{thm: pratelli} and \ref{thm: pratelli2}).
	
	\begin{theorem}
		Let $\ttM_1\in \PP(\PP(\X_1))$ be atomless and concentrated on atomless measures. Then:
		\begin{enumerate}
			\item Assume $\cost:\X_1\times \X_2 \to [0,+\infty)$ is continuous and bounded, then for all $\ttM_2\in \PP(\PP(\X_2))$ it holds $\calC(\ttM_1,\ttM_2) = \calC_{sM}(\ttM_1,\ttM_2)$.
			\item Assume $\X_1 = \X_2 = \X$, $(\X,\rmd)$ is a complete and separable metric space. Let $p\in[1,+\infty)$. If furthermore $\ttM_1, \ttM_2 \in \PP_p(\PP_p(\X))$, then the same holds, that is for all $\varepsilon>0$ there exists $\rmT^\varepsilon:\X \times \PP(\X) \to \X$ such that, if $\cT^\varepsilon(\mu_1):= \rmT^\varepsilon(\cdot,\mu_1)_\sharp \mu_1$, 
			\[\cT^\varepsilon_\sharp \ttM_1 = \ttM_2 \quad \text{ and }\quad \int_{\PP(\X)}\int_{\X} \rmd^p(x_1,\rmT^\varepsilon(x_1,\mu_1)) d\mu_1(x_1) d\ttM_1(\mu_1) \leq \sfW_p^p(\ttM_1, \ttM_2) + \varepsilon.\]
		\end{enumerate}
	\end{theorem}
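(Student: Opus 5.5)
The plan is to mimic Pratelli's strategy at both levels, combining density of "nice" competitors with stability of the optimal cost. Since $\calC\le\calC_{sM}$ is trivial, only $\calC_{sM}\le\calC$ needs proof. By Pratelli's theorem \cite{pratelli2007equality} applied to the Polish space $\PP(\X_1)$ with atomless $\ttM_1$ and continuous bounded cost $\C$, we have $\calC=\calC_M$; likewise at the inner level, for every atomless $\mu_1$, $\C(\mu_1,\mu_2)=\inf\{\int\cost(x,T(x))\,d\mu_1 : T_\sharp\mu_1=\mu_2\}$. Here $\C$ is continuous and bounded on $\PP(\X_1)\times\PP(\X_2)$ for the narrow topology when $\cost$ is; this is standard stability of optimal transport. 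The difficulty is making the inner near-optimal maps depend measurably on $\mu_1$ while keeping the exact push-forward constraint $\cT_\sharp\ttM_1=\ttM_2$.

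\textbf{Step 1 (outer discretization).} Fix $\varepsilon>0$ and an optimal $\Pi\in\Gamma(\ttM_1,\ttM_2)$. As in Pratelli's proof, partition $\PP(\X_1)$ and $\PP(\X_2)$ into finitely many Borel sets of small diameter, up to a set of small measure. By atomlessness of $\ttM_1$, a Borel map $\cT$ with $\cT_\sharp\ttM_1=\ttM_2$ can be built so that $(\operatorname{id},\cT)_\sharp\ttM_1$ is close to $\Pi$ and the cost increases by at most $\varepsilon$. Concretely, this is done via isomorphism of atomless standard Borel spaces with $([0,1],\mathcal L^1)$ and a measurable rearrangement. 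I would rather invoke $\calC=\calC_M$ directly and take $\cT$ with $\int\C(\mu_1,\cT(\mu_1))\,d\ttM_1\le\calC+\varepsilon$.

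\textbf{Step 2 (measurable inner maps).} Given $\cT$, I need a Borel $\rmT(x,\mu_1)$ with $\rmT(\cdot,\mu_1)_\sharp\mu_1=\cT(\mu_1)$ exactly and $\int\cost(x,\rmT(x,\mu_1))\,d\mu_1\le\C(\mu_1,\cT(\mu_1))+\varepsilon$. The plan is to use a measurable selection of optimal plans $\mu_1\mapsto\pi_{\mu_1}\in\Gamma_o(\mu_1,\cT(\mu_1))$, available by the measurable selection underlying Lemma \ref{lemma: rand coupl}. Then apply a parametrized version of Pratelli's construction. Choose a fixed countable Borel partition of $\X_1\times\X_2$ into small cells; for each $\mu_1$, the masses $\pi_{\mu_1}(A_i\times B_j)$ depend measurably on $\mu_1$. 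Using a measurable family of Borel isomorphisms between $(\X_1,\mu_1)$ and $([0,1],\mathcal L^1)$ (for instance built from a fixed Borel embedding $\X_1\hookrightarrow[0,1]$ and the distribution function of the image of $\mu_1$, which is continuous since $\mu_1$ is atomless), split each $A_i$ into pieces of prescribed masses and send them to the corresponding $B_j$-pieces of $\cT(\mu_1)$ via monotone rearrangement. Measurability in $\mu_1$ then follows because everything is given by explicit quantile formulas. Continuity and boundedness of $\cost$ control the error uniformly, since the cells can be chosen so that $\cost$ oscillates little on $A_i\times B_j$ outside a small mass, with tightness ensuring this uniformly on a compact set of large $\ttM_1$-measure. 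This step is the main obstacle: the difficulty is joint measurability together with an exact marginal constraint for every $\mu_1$, which I would handle through the quantile parametrization.

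\textbf{Step 3 (the $p$-Wasserstein case).} When $\cost=\rmd^p$, the cost is unbounded and only lower-bounded-continuous, so I would truncate: approximate $\ttM_i$ by measures concentrated on measures with bounded support, or use $\cost_R=\min(\rmd^p,R)$. The remainder is controlled by the uniform integrability of $\int\rmd^p(x,x_0)\,d\mu$ under $\ttM_i\in\PP_p(\PP_p(\X))$, together with the elementary bound $\rmd^p(x,y)\le2^{p-1}(\rmd^p(x,x_0)+\rmd^p(y,x_0))$ restricted to the set where $\rmd^p\ge R$. Step 2 is performed with the same construction, with cells of small diameter so that $\rmd^p$ is nearly constant on them away from large distances. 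The tails are estimated via $p$-moments, giving the map $\rmT^\varepsilon$ with cost at most $\sfW_p^p(\ttM_1,\ttM_2)+\varepsilon$.
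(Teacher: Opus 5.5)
Your argument is correct in outline, but it handles the key measurability step differently from the paper. Both proofs begin the same way. Pratelli's theorem on the Polish space $\PP(\X_1)$, with the continuous bounded cost $\C$, gives a Borel $\cT$ with $\cT_\sharp\ttM_1=\ttM_2$ and near-optimal cost.

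The paper then never builds inner maps by hand. It applies Pratelli's Theorem B pointwise to show that $(\operatorname{pr}^1_\sharp,\operatorname{pr}^2_\sharp)$ maps the set $\cA_\varepsilon$ of deterministic, $\varepsilon/2$-optimal plans with diffuse first marginal onto $\PPdiff(\X_1)\times\PP(\X_2)$. The set $\cA_\varepsilon$ is Borel because $\PPdet$ is, a nontrivial fact taken from \cite{LS25}. The paper then picks a universally measurable right inverse $G^\varepsilon$ and uses Lemma \ref{lemma: fully det} to turn $G^\varepsilon_\sharp(\operatorname{id},\cT)_\sharp\ttM_1$ into a jointly measurable $\rmT^\varepsilon$.

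Your cell-and-quantile construction, starting from a measurably selected optimal plan $\pi_{\mu_1}$, replaces all of this. In effect it reproves the inner Pratelli theorem with measurable dependence on $\mu_1$. It is more elementary and self-contained, needing neither Borel measurability of $\PPdet$ nor the representation lemma, at the price of explicit bookkeeping.

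That bookkeeping is lighter than you suggest. The new plan and $\pi_{\mu_1}$ give the same mass to every cell $A_i\times B_j$. So the integrated error is at most $\delta+(\sup\cost)\,\bar\pi\big((K_1\times K_2)^c\big)$, where $\bar\pi:=\int\pi_{\mu_1}\,d\ttM_1(\mu_1)$. Hence tightness of the two intensity measures $\int\mu\,d\ttM_i(\mu)$, which are the marginals of $\bar\pi$, is enough; no uniformity in $\mu_1$ is needed. Also, your distribution-function maps are measure-preserving rather than isomorphisms, which is all the construction uses.

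For part 2 the paper needs no truncation, because Pratelli's Theorem B requires only continuity of the cost, not boundedness. The same argument runs on the Polish space $(\PP_p(\X),\sfw_p)$, with the finite continuous cost $\sfw_p^p$ at the outer level and $\rmd^p$ at the inner level.

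Your choice $\cost_R=\min(\rmd^p,R)$ also works, and even reduces part 2 to part 1, but you should spell out the mechanism:
\begin{itemize}
\item For any admissible $\rmT$, the law of $(x,\rmT(x,\mu_1))$ under $\overline{\ttM}_1$ has the fixed marginals $\int\mu\,d\ttM_1(\mu)$ and $\int\mu\,d\ttM_2(\mu)$, both with finite $p$-moment.
\item The truncated value is at most $\sfW_p^p(\ttM_1,\ttM_2)$. By Chebyshev, the $\overline{\ttM}_1$-mass of $\{\rmd^p(x,\rmT)\ge R\}$ is at most $(\sfW_p^p(\ttM_1,\ttM_2)+\varepsilon)/R$.
\item The bound $\rmd^p(x,y)\le 2^{p-1}(\rmd^p(x,x_0)+\rmd^p(y,x_0))$, together with uniform integrability of $\rmd^p(\cdot,x_0)$ under those two fixed measures, then makes the tail small uniformly in $\rmT$.
\end{itemize}
Drop your other option of approximating $\ttM_i$ by boundedly supported random measures. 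It does not directly produce competitors satisfying $\cT_\sharp\ttM_1=\ttM_2$.
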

	\noindent Its proof strongly relies on Pratelli's theorem and on the characterization of optimal random couplings, but not on the Lions' lifting of Section \ref{sec: Lions}. 
	
	\bigskip
	
	\textbf{Brenier theorem for random measures on Banach spaces.} In \ref{subsec: BreBan}, not only we show that the values of the Kantorovich and the strict Monge problem coincide, but also that there exists a unique optimal random coupling and it is fully deterministic, under stronger assumptions on the base space and on the initial random measure. In particular, we put ourselves in the context of Examples \ref{ex: wass} and \ref{ex: wass 2}, with the additional assumption that $\X = \rmB$ is a separable Banach space endowed with a strictly convex norm $\|\cdot\|$ and $p\in(1,+\infty)$. 
	
	In this scenario, following Example \ref{ex: wass 3}, we have the great advantage that the lifted space is itself a separable Banach space with strictly convex norm, being the Bochner space $\cB:= L^p(\rmQ,\mathcal{F}_\rmQ, \bbM; \rmB)$. We use this structure to transfer `Gaussian' concepts from $\cB$ to $\PP_p(\rmB)$.
	
	\begin{enumerate}
		\item Following \cite[Chapter 6]{AGS08}, we say that a measure $\mu_1 \in \PP_p(\rmB)$ is Gaussian regular if $\mu_1(N) = 0$ for every Gaussian null set $N\subset \rmB$ (see \cite{Phelps78}).
		\item We say that a random measure $\ttM_1 \in \PP_p(\PP_p(\rmB))$ is super Gaussian regular if it is concentrated on transport regular measures (that is for $\ttM_1$-a.e. $\mu_1$ and every $\mu_2 \in \PP_p(\rmB)$ there exists a unique optimal coupling and it is deterministic) and $\ttM_1(\rmN) = 0$ for all $\rmN\subset \PP_p(\rmB)$ such that $\iota^{-1}(\rmN)$ is Gaussian null in $\cB$, where $\iota\colon \cB \to \PP_p(\rmB)$ is the law map defined in \eqref{eq:lift intro}.
	\end{enumerate}
	This approach is inspired by the one presented in \cite{PS25convex}, which was specific for the Hilbert setting with $p=2$. Then we can prove the following result.
	
	\begin{theorem}\label{thm: BreRM intro}
		Let $(\rmB, \|\cdot\|)$ be a Banach space with strictly convex norm. Let $p\in(1,+\infty)$ and $\ttM_1,\ttM_2 \in \PP_p(\PP_p(\rmB))$ such that $\ttM_1$ is super Gaussian regular. Then, there exists a unique optimal random coupling $\ttP \in \RGamma_{o,p}(\ttM_1,\ttM_2)$ and it has the form \eqref{eq: repr fully intro} for some Borel measurable $\rmT:\rmB\times \PP(\rmB) \to \rmB$. In particular
		\begin{equation*}
			\sfW_p^p(\ttM_1,\ttM_2) = \int_{\PP_p(\rmB)} \int_{\rmB} \|x_1 - \rmT(x_1,\mu_1)\|_{}^p d\mu_1(x_1)d\ttM_1(\mu_1), \quad \cT_\sharp \ttM_1 = \ttM_2, \quad \cT(\mu_1) = \rmT(\cdot,\mu_1)_\sharp\mu_1. 
		\end{equation*}
	\end{theorem}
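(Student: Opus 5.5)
The plan is to lift the problem to the Bochner space $\cB=L^p(\rmQ,\mathcal{F}_\rmQ,\bbM;\rmB)$ and there run the classical Gaussian-null argument for $c$-concave potentials. The characterization of optimal random couplings (Theorem \ref{thm: char opt RGamma}) and its lifted counterpart from Section \ref{sec: Lions} will be used throughout.

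\textbf{Step 1: a potential and its lift.} Since $\ttM_i\in\PP_p(\PP_p(\rmB))$, assumption \eqref{eq: as intro} holds with $\tta_i=2^{p-1}\|\cdot\|^p$. Theorem \ref{thm: char opt RGamma} then gives a $\sfw_p^p$-concave $\phi_1\colon\PP_p(\rmB)\to[-\infty,+\infty)$ such that every optimal $\ttP\in\RGamma_{o,p}(\ttM_1,\ttM_2)$ is concentrated on the total superdifferential $\partial^+_{\ttt,\sfw_p^p}\phi_1$. Moreover $\phi_1$ can be chosen independently of $\ttP$, because Kantorovich potentials are common to all optimal plans. Next I lift: $\hat\phi_1=\phi_1\circ\iota$ is $\hat\C$-concave on $\cB$ with $\hat\C(Z_1,Z_2)=\|Z_1-Z_2\|_{\cB}^p$. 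Hence $-\hat\phi_1$ has the form $\sup_{Z_2}\{-\|Z_1-Z_2\|^p_\cB+\dots\}$. Following the argument of \cite[Chapter 6]{AGS08}, I would show that $\hat\phi_1$ is locally Lipschitz on the interior of its finiteness domain, and that the boundary of that domain, which is convex-type, is Gaussian null. By Aronszajn--Phelps, $\hat\phi_1$ is then Gateaux differentiable outside a Gaussian null set $\hat N\subset\cB$.

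\textbf{Step 2: transfer the null set and identify the map.} I need the exceptional set to be invariant under measure-preserving rearrangements of $\rmQ$, so that it has the form $\iota^{-1}(\rmN)$. Since $\hat\phi_1$ is law-invariant, its differentiability set is invariant too, so $\hat N=\iota^{-1}(\rmN)$ for some $\rmN\subset\PP_p(\rmB)$. Super Gaussian regularity then gives $\ttM_1(\rmN)=0$. Fix $\mu_1\notin\rmN$ and $\pi$ in the total superdifferential at $\mu_1$. Lift $\pi$ to a pair $(Z_1,Z_2)$ with $\iota(Z_1)=\mu_1$; this is possible because $\rmQ$ is standard and atomless. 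Then $Z_2$ is a maximizer in the $\hat\C$-superdifferential at $Z_1$, so differentiating the superdifferential inequality gives
\[
D\hat\phi_1(Z_1)=\mathrm{D}\bigl(\tfrac1p\|\cdot\|_\cB^p\bigr)\text{-type duality map of }(Z_1-Z_2),
\]
up to constants. Strict convexity of $\|\cdot\|$ makes the duality map of $L^p(\rmB)$ injective for $p>1$, so $Z_2$ is uniquely determined by $Z_1$. Law invariance forces $Z_2=\rmT(Z_1,\mu_1)$ pointwise: the duality map acts pointwise, and law invariance of the gradient gives $D\hat\phi_1(Z_1)=G(Z_1)$ for a Borel $G(\cdot,\mu_1)$. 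Therefore $\pi=(\mathrm{id},\rmT(\cdot,\mu_1))_\sharp\mu_1$.

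\textbf{Step 3: measurability, representation and uniqueness.} The transport-regularity part of the assumption yields the deterministic coupling $\rmT(\cdot,\mu_1)$, which is optimal between $\mu_1$ and any fixed $\mu_2$. I would use it to obtain a Borel selection $(x,\mu_1)\mapsto\rmT(x,\mu_1)$, via a measurable selection on the graph of $\mu_1\mapsto\pi$. Disintegrating $\ttP$ with respect to $\operatorname{pr}^1_{\sharp\sharp}$ shows that $\ttP$ is given by \eqref{eq: repr fully intro}, because $\pi$ is unique for $\ttM_1$-a.e. $\mu_1$. Uniqueness of $\ttP$ follows from linearity: the average of two optimal random couplings is optimal and hence fully deterministic. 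Fiberwise, its disintegration is then a mixture of deterministic couplings sharing the first marginal $\mu_1$ with the same map, which forces the two couplings to coincide.

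\textbf{Main obstacle.} I expect the hardest step to be Step 2: showing that Gaussian nullity of the non-differentiability set of the law-invariant $\hat\phi_1$ descends to a $\ttM_1$-null set, and that the lifted gradient is pointwise, i.e.\ a function of $Z_1$ alone. I would first try to prove invariance of $\hat\phi_1$ under the group of measure-preserving automorphisms of $\rmQ$, together with a density argument on the dyadic partitions of $\rmQ$, in the spirit of \cite{PS25convex}.
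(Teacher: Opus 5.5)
Your proposal has two genuine gaps, and they sit exactly where the paper's proof uses a specific device.

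\textbf{Step 1.} This step contains a false claim. The lifted space $\cB$ is always infinite-dimensional, even for $\rmB=\R$. There, the finiteness domain of a $\hat{\C}$-concave potential can have empty interior while carrying full Gaussian mass, so ``locally Lipschitz on the interior of the domain, whose boundary is Gaussian null'' gives nothing.

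For instance, take $p=2$ and $\rmB=\R$. The law-invariant function $\hat{\phi}_1(Z)=-\int_{\rmQ}Z^4\,d\bbM$ is $\hat{\C}$-concave, since $\|Z\|_{\cB}^2-\hat{\phi}_1(Z)$ is convex and lower semicontinuous. It is an optimal potential between $\ttM_1=\iota_\sharp\frg$, with $\frg$ a non-degenerate Gaussian concentrated on $C^1$ functions, and the image of $\ttM_1$ under $\mu\mapsto(x+2x^3)_\sharp\mu$. Yet its domain $L^4$ is dense with empty interior in $L^2$, and it has full $\frg$-measure. Convex sets with empty interior need not be Gaussian null in infinite dimensions, and for $p\neq 2$ or non-Hilbertian norms there is no convexity to exploit anyway.

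The paper avoids this by truncating the target.
\begin{enumerate}
\item It first assumes $\ttM_2$ is concentrated on $\{\sfw_p(\mu_2,\delta_0)\le R\}$.
\item It replaces the potential by $\varphi(\mu_1)=\inf\{\sfw_p^p(\mu_1,\mu_2)-\phi^{\C}(\mu_2):\sfw_p(\mu_2,\delta_0)\le R\}$. This is still optimal, real-valued and locally Lipschitz on all of $\PP_p(\rmB)$. Hence $\hat{\varphi}$ is locally Lipschitz on $\cB$ and Aronszajn--Phelps applies.
\item It recovers the general case by restricting an optimal $\ttP$ to $A_n=\{\pi:\int\|x_2\|^p\,d\pi\le n^p\}$. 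The normalized restrictions are optimal between their own marginals, and their first marginals are $\ll\ttM_1$, hence still super Gaussian regular. One then passes to the limit as in the proof of Theorem \ref{thm: Monge Banach}.
\end{enumerate}

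\textbf{Step 2.} This step rests on an unproved assertion. Law invariance of $\hat{\phi}_1$ only gives invariance under composition with measure-preserving automorphisms of $\rmQ$. Two variables with the same law are in general related by such automorphisms only approximately, and Gateaux differentiability is not stable under that approximation. So you cannot conclude $\hat{N}=\iota^{-1}(\rmN)$. Likewise, lifting two different $\pi,\pi'$ with the \emph{same} prescribed first component $Z_1$ is impossible in general (see the discussion after \eqref{eq: mp}).

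Both problems disappear if you use transport regularity here rather than only in Step 3, which is what the paper does. Suppose $\mu_1\in\PP^{\mathsf{tr}}_p(\rmB)$ and $\mu_2\neq\mu_2'$ both belong to $\partial^+_{\C}\varphi(\mu_1)$.
\begin{itemize}
\item The optimal couplings are $(\mathrm{id},\rmt^{\pi})_\sharp\mu_1$ and $(\mathrm{id},\rmt^{\pi'})_\sharp\mu_1$.
\item They lie in $\partial^+_{\ttt,\C}\varphi$ by Proposition \ref{prop: char tot superdiff}.
\item By Proposition \ref{prop: lift cont}, the two distinct points $\rmt^{\pi}(Z_1)$ and $\rmt^{\pi'}(Z_1)$ lie in $\partial^+_{\hat{\C}}\hat{\varphi}(Z_1)$ for \emph{every} $Z_1\in\iota^{-1}(\mu_1)$.
\item The set $\partial^+_{\hat{\C}}\hat{\varphi}(Z_1)$ is a singleton at Gateaux differentiability points, by injectivity of the duality map of the strictly convex space $\cB$ (Lemma \ref{lemma: inherit}).
\end{itemize}
This gives $\iota^{-1}(\rmN)\subset N$ for $\rmN=\{\mu_1\in\PP^{\mathsf{tr}}_p(\rmB):|\partial^+_{\C}\varphi(\mu_1)|>1\}$, which is exactly what super Gaussian regularity needs. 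No invariance of the exceptional set and no pointwise representation of the gradient are required.

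This shows that $(\operatorname{pr}^1_\sharp,\operatorname{pr}^2_\sharp)_\sharp\ttP$ is deterministic. Combined with $\ttP$-a.e. $\pi$ being deterministic, your Step 3 (Borel inverse of the marginal map, then averaging for uniqueness) goes through as in the paper.
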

	
	It is worth mentioning that the only previous version of this theorem is present in \cite{PS25convex, beiglbock2025brenier} and is specific of the case $p=2$ and $\rmB = \rmH$ Hilbert space. Thus, this result is new even in the finite dimensional case $\rmB = \R^d$ for every $d\geq 1$ (for any strictly convex norm), and the strict convexity of the norm is sharp, as we will show in Remark \ref{rem: infty norm}. Moreover, in the case $d=1$ a stronger result holds, as we can replace the hypotheses that $\ttM_1$ is concentrated on Gaussian regular measures with being concentrated on atomless measures, see Remark \ref{rem: R}. 
	
	Differently from what presented in the case of the general cost, this result strongly relies on Lions' lifting, because we are defining super Gaussian regularity through it (even if this definition is independent of the choice of the underlying probability space $(\rmQ, \cF_{\rmQ}, \bbM)$) and the proof relies on the optimal transport theory in the Bochner space $\mathcal{B}$. It would be interesting to understand how to develop a strategy for similar results that do not rely on the lifting. 
	
	Of course, the proof relies also on a version of the classic Brenier theorem on strictly convex Banach spaces. This result is folklore, but we did not find a clear reference for it and we prove it in \ref{subsub: banach monge}.

	\bigskip

	\noindent\textbf{Other literature.} The first paper studying the Monge problem on laws of random probability measures is \cite{Emami-Pass}, where the authors studied the uniqueness of optimal couplings in the case in which $\X_1 = \X_2$ is a Riemannian manifold and $\cost$ is the square distance. On the other hand, independently, \cite{beiglbock2025brenier, PS25convex} studied the strict Monge formulation when $\X_1 = \X_2$ is an Hilbert space with underlying cost function the square distance. In a setting similar to that of Section \ref{subsec: BreBan}, they introduced a class of measures $\ttM_1$ for which optimal random couplings are unique and induced by a solution to the strict Monge problem. Unlike the general Banach setting considered in Section 5, the Hilbert setting, with cost the squared distance, allows for weaker assumptions than Gaussian regularity: this is possible because the optimal Kantorovich potentials are convex and then differentiable out of a countable union of d.c.-hypersurfaces \cite{Zajicek79}. Convexity fails in general Banach spaces, and thus a different result on the differentiability of optimal Kantorovich potentials is needed in general Banach spaces (even for $p=2$ or in Hilbert but with $p\neq2$). Note that in \cite{beiglbock2025brenier}, the authors studied the iterated OT problem at deep $n\geq 2$, that is on the space $\PP_2(\dots \PP_2(\rmH) \dots)$, with applications to the Monge problem for the adapted Wasserstein distance between finite-time stochastic processes.
	
	Beyond optimal transport, many researchers have been recently attracted to study the space of laws of random probability measures: for applications in Bayesian statistics \cite{nguyen2016borrowing, catalano2024hierarchical, catalano2024wasserstein}; the study of evolution equations \cite{lacker2022superposition, pinzi2025first, Pinzi-Savare25, rehmeier2023linearization}, also for understanding geometric properties of the classic Wasserstein space from a metric measure perspective \cite{dello2022dirichlet, delloschiavo2024massive, pinziatomic2025}; fine structure of smooth/convex functionals and associated gradient flows \cite{bonet2025flowing-329, vauthier2025variational}.
	
	An overview on some of the aforementioned papers is currently in preparation as the Ph.D. thesis of the author \cite{pinzi2026+}.
	
	There is a vast research area on optimal transport between random measures: for example, in \cite{huesmann2016optimal}, the author studies the optimal transport between two measure-valued random variables $\mu_1^{\omega}$ and $\mu_2^{\omega}$, assuming equivariant properties, due to the fact that they have infinite mass almost surely (see also
	\cite{last2009invariant, huesmann2013optimal, last2015construction, erbar2025optimal}). Moreover, these kinds of settings are also important for studying the so-called \textit{random matching problems}, see e.g. \cite{ambrosiostratrevisan, goldman2021convergence, huesmann2024asym}.

	\bigskip
	
	\noindent\textbf{Plan of the paper.} In Section \ref{sec: prel} we recall some preliminaries about the topology and Wasserstein metric on the space of laws of random probability measures, and the main classic results of optimal transport theory. In Section \ref{sec: OT gen cost}, we first fix the setting, and then prove the above mentioned characterization of optimal random couplings. In particular, we give the definition of total $\sfC$-cyclical monotonicity and total $\sfC$-cyclical superdifferential, studying their structure in Propositions \ref{prop: char tot superdiff} and \ref{prop: char tot mon}. In Section \ref{sec: Lions}, we first recall some results about convergence in probability, and then define the Lions' lifting for representing the space of probability measures over a Polish space. We then show how to lift the OT problem and how it relates to \eqref{eq: OTRM rand intro}, which in particular will provide an alternative proof of the characterization of the previous section. Finally, in Section \ref{sec: strict}, we introduce the strict Monge formulation and prove the results presented above.

	\bigskip

	\textbf{Acknowledgements.} The author warmly thanks Gudmund Pammer for stimulating discussions on this topic. The author is grateful to Giuseppe Savar\'e for carefully reading a preliminary version of this paper and for giving valuable suggestions on its improvement. He also thanks Hugo Lavenant for providing a preliminary version of \cite{LS25}.
	The author is member of GNAMPA(INdAM). The author has been partially supported by the European Research Council (ERC) under the European Union’s Horizon Europe research and innovation programme (grant agreement No. 101200514, project acronym OPTiMiSE).
	Views and opinions expressed are however those of the author(s) only and do not necessarily reflect those of the European Union or the European Research Council Executive Agency. Neither the European Union nor the granting authority can be held responsible for them.

	\section{Preliminaries}\label{sec: prel}

	\subsection{Narrow topology}
	Let $(\Y,\tau)$ be a Polish space. We will denote with $\mathcal{B}(\Y)$ its Borel $\sigma$-algebra and with $\PP(\Y)$ the probability measures defined over $\mathcal{B}(\Y)$. We endow $\PP(\Y)$ with the narrow topology, i.e. the coarsest topology that makes continuous the functions \(\mu\mapsto \int_{\Y} \phi d\mu\) for all $\phi \in C_b(\Y)$ (continuous and bounded functions). We will simply write $\mu_n \to \mu$ if the sequence of probability measures $\mu_n \in \PP(\Y)$ converges to $\mu\in \PP(\Y)$ in the narrow topology. Given a map $f:\Y_1 \to \Y_2$ between two measurable spaces, for all $\mu\in \PP(\Y_1)$ we will denote by $f_\sharp \mu\in \PP(\Y_2)$ the push-forward measure. Moreover, given a product space $\Y_1 \times \dots \times \Y_k$, we denote by $\operatorname{pr}^i$ the projection on the $i$-th coordinate, for $i = 1,\dots,k$.
	
	The previous construction can be iterated: in fact, the set of probability measures $\PP(\Y)$, endowed with the narrow topology, is a Polish space, so that we use the previous setting to define the space $\PP(\PP(\Y))$ with its narrow topology. 
	We will denote
	\begin{equation}\label{eq: overline RM}
		\text{for all }\ttM \in \PP(\PP(\Y)), \quad \overline{\ttM}\in \PP(\Y \times \PP(\Y)) \ \text{ defined as } 
		\overline{\ttM}:= \int_{\PP(\Y)} \mu \otimes \delta_\mu d\ttM(\mu).
	\end{equation}
	Given a map $f\colon\Y_1 \to \Y_2$ we define by induction the iterated push-forward as $f_{\sharp\sharp}\colon \PP(\PP(\Y_1))\to \PP(\PP(\Y_2))$ as $f_{\sharp\sharp} := (f_\sharp)_\sharp$ (see e.g. \cite[Proposition D.8]{Pinzi-Savare25}).

	\subsection{$\texorpdfstring{\operatorname{OT}_{\mathsf{C}}}{}$- problem, $\texorpdfstring{\mathsf{C}}{}$-concave functions and $\texorpdfstring{\mathsf{C}}{}$-superdifferentiability}\label{subsec: C-conc theory}
	Here we recall some classic results on optimal transport. For more detailed expositions we refer to \cite{AGS08, Santambrogio15, Villani09}. 
	Let $\Y_1$ and $\Y_2$ be Polish spaces, and $\C\colon\Y_1\times \Y_2 \to [0,+\infty]$ a (non-identically $+\infty$) cost function. Given $\mu_i \in \PP(\Y_i)$, the optimal transport problem is 
	\begin{equation}\tag{$\operatorname{OT}_{\mathsf{C}}$}\label{eq:00}
		\min\left\{ \int_{\Y_1\times\Y_2} \C(y_1,y_2) d\pi(\mu_1,\mu_2) \ : \ \pi \in \Gamma(\mu_1,\mu_2) \right\},
	\end{equation}
	where $\Gamma(\mu_1,\mu_2)$ is the set of couplings $\pi \in \PP{}(\Y_1\times \Y_2)$ such that $\operatorname{pr}^i_\sharp = \mu_i$, for $i=1,2$. The following theorem guarantees mild sufficient conditions for which the minimum exists and it is finite, and it can be proven by the Direct Method of the Calculus of Variations and the Prohorov's theorem.
	\begin{theorem}\label{thm:OT_C}
		Assume that $\C$ is lower semicontinuous. 
		Then, there exists $\pi \in \Gamma(\mu_1,\mu_2)$ realizing the (possibly infinite) minimum in \eqref{eq:00}. The set of optimal couplings is denoted as $\Gamma_{o,\C}(\mu_1,\mu_2)$.
	\end{theorem}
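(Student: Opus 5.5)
We argue by the direct method of the Calculus of Variations. When the minimum is $+\infty$ there is nothing to prove: every $\pi\in\Gamma(\mu_1,\mu_2)$ is a minimizer, and $\Gamma(\mu_1,\mu_2)$ is nonempty because it contains $\mu_1\otimes\mu_2$. So assume the infimum $m:=\inf_{\pi\in\Gamma(\mu_1,\mu_2)}\int\C\,d\pi$ is finite, and take a minimizing sequence $(\pi_n)\subset\Gamma(\mu_1,\mu_2)$ with $\int\C\,d\pi_n\to m$.

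\emph{Compactness.} First we show that $\Gamma(\mu_1,\mu_2)$ is tight. Each $\mu_i$ is a single Borel probability measure on a Polish space, hence tight by Ulam's theorem. Given $\varepsilon>0$, choose compact sets $K_i\subset\Y_i$ with $\mu_i(\Y_i\setminus K_i)\le\varepsilon/2$. Then for every $\pi\in\Gamma(\mu_1,\mu_2)$,
\[
\pi\big((\Y_1\times\Y_2)\setminus(K_1\times K_2)\big)\le \mu_1(\Y_1\setminus K_1)+\mu_2(\Y_2\setminus K_2)\le\varepsilon .
\]
By Prohorov's theorem, a subsequence (not relabeled) converges narrowly to some $\pi\in\PP(\Y_1\times\Y_2)$. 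The marginal maps $\pi\mapsto\operatorname{pr}^i_\sharp\pi$ are narrowly continuous, since $\varphi\circ\operatorname{pr}^i\in C_b$ whenever $\varphi\in C_b(\Y_i)$. Hence $\pi\in\Gamma(\mu_1,\mu_2)$, i.e.\ $\Gamma(\mu_1,\mu_2)$ is narrowly closed.

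\emph{Lower semicontinuity.} It remains to show that $\pi\mapsto\int\C\,d\pi$ is narrowly lower semicontinuous. The space $\Y_1\times\Y_2$ is metrizable, and $\C\ge0$ is lower semicontinuous, so $\C$ is the pointwise increasing limit of bounded Lipschitz functions. For instance one can take the truncated inf-convolutions
\[
\C_k(y):=\inf_{z}\big\{\min(\C(z),k)+k\,d(y,z)\big\}.
\]
For each $k$, $\int\C_k\,d\pi=\lim_n\int\C_k\,d\pi_n\le\liminf_n\int\C\,d\pi_n=m$. Letting $k\to\infty$, monotone convergence gives $\int\C\,d\pi\le m$. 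Therefore $\pi$ is a minimizer.

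The only step that needs genuine care is the approximation of a possibly $+\infty$-valued lower semicontinuous function by continuous bounded ones. The inf-convolution above handles this: it is $k$-Lipschitz, bounded by $k$, nondecreasing in $k$, and converges to $\C$ pointwise precisely because $\C$ is lower semicontinuous. Everything else is routine.
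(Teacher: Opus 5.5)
Your proof is correct and follows the route the paper indicates. The paper gives no detailed proof and only remarks that the result follows from the Direct Method of the Calculus of Variations and Prohorov's theorem; your argument carries this out: tightness of $\Gamma(\mu_1,\mu_2)$ from tightness of the marginals, narrow closedness of the coupling set, and narrow lower semicontinuity of $\pi\mapsto\int\C\,d\pi$ via monotone approximation of $\C$ by bounded Lipschitz inf-convolutions. This is the same compactness and lower-semicontinuity scheme the paper later uses in the proof of Lemma~\ref{lemma: lsc C}.
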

	
	We can actually say a lot more on the structure of optimal couplings, exploiting the dual problem of \eqref{eq:00} and the theory of $\C$-concave functions.
	
	\begin{definition}\label{def:C-conc} Let $\Y_1',\Y_2'$ be two sets and $\C\colon\Y_1'\times \Y_2' \to [0,+\infty)$.
		\begin{enumerate}
			\item[$(1)$] We say that $\Gamma \subset \Y_1'\times \Y_2'$ is $\C$-cyclically monotone if for all $k \geq 1$, for all $\sigma$ permutation of $\{1,\dots,k\}$ and $(y_{1,i}, y_{2,i}) \in \Gamma$, $i = 1,\dots,k$, it holds
			\begin{equation}\label{eq:10}
				\sum_{i=1}^k \C(y_{1,i},y_{2,\sigma(i)}) \geq  \sum_{i=1}^k \C(y_{1,i},y_{2,i}).
			\end{equation}
			Note that, by decomposition of permutation in cycles, it suffices to check that for all $k\geq 1$ and $(y_{1,i}, y_{2,i})\in \Gamma$ it holds
			\[\sum_{i=1}^k \C(y_{1,i},y_{2,i-1}) \geq  \sum_{i=1}^k \C(y_{1,i},y_{2,i}),\]
			where we use the convention $y_{2,0} = y_{2,k}$.
			\item[$(2)$] A function $\phi_1\colon\Y_1' \to [-\infty,+\infty)$ is said to be $\C$-concave if it is not identically $-\infty$ and there exists $A\subset \Y_2' \times \R$ such that 
			\begin{equation}\label{eq:11}
				\phi_1(y_1) = \inf_{(y_2,\alpha) \in A} \C(y_1,y_2) -\alpha.
			\end{equation}
			Analogously, we can define $\C$-concavity for a function $\phi_2\colon\Y_2' \to [-\infty,+\infty)$.
			\item[$(3)$] Given a function $\phi_1\colon\Y_1' \to [-\infty,+\infty)$, we define its $\C$-conjugate function $\phi_1^{\C}\colon\Y_2' \to [-\infty,+\infty]$
			\begin{equation}\label{eq:12}
				\phi_1^{\C}(y_2) := \inf_{y_1\in\Y_1'} \C(y_1,y_2) - \phi(y_1).
			\end{equation}
			Similarly, for $\phi_2\colon\Y_2' \to [-\infty,+\infty)$, we define $\phi_2^{\C}\colon \Y_1' \to [-\infty,+\infty)$.
			\item[$(4)$] If $\phi_1\colon\Y_1' \to [-\infty,+\infty)$ and $y_1 \in \operatorname{dom}\phi:= \{\phi_1>-\infty\}$, its $\C$-superdifferential is defined as 
			\begin{equation}\label{eq:13}
				\begin{gathered}
					\partial^+_{\C}\phi_1 := \left\{(y_1,y_2)\in \operatorname{dom}\phi\times \Y_2'  :  \C(y_1,y_2) - \phi_1(y_1) \leq \C(y_1',y_2) - \phi_1(y_1') \quad \forall y_1' \in \Y_1'\right\},
					\\
					\partial^+_\C\phi_1(y_1):= \left\{y_2  : (y_1,y_2)\in \partial^\C\phi_1\right\}.
				\end{gathered}
			\end{equation}
		\end{enumerate}
	\end{definition}
	
	With these definitions, it is not hard to show that a function $\phi_1$ is $\C$-concave if and only if $\phi_1^{\C\C} = \phi_1$. Moreover the $\C$-superdifferential $\partial^+_\C\phi_1$ is $\C$-cyclically monotone. The celebrated Rockafellar--Ruschendorf theorem gives the opposite direction.
	
	\begin{theorem}\label{thm:RR}
		Assume that $\Gamma \subset \Y_1'\times \Y_2'$ is $\C$-cyclically monotone. Then, there exists a $\sfC$-concave function $\phi\colon\Y_1' \to [-\infty,+\infty)$ such that $\Gamma \subset \partial^+_{\C}\phi$.
	\end{theorem}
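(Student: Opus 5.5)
The plan is the classical Rockafellar construction: build a $\C$-concave function from chains of points of $\Gamma$ and read off the superdifferential inclusion from the cyclical monotonicity inequality \eqref{eq:10}. We may assume $\Gamma\neq\emptyset$; otherwise any $\C$-concave function works, for instance $\phi\equiv \C(\cdot,\bar y_2)$ for a fixed $\bar y_2\in\Y_2'$, which is finite because $\C$ is real valued in Definition \ref{def:C-conc}. Fix a base point $(\bar y_1,\bar y_2)\in\Gamma$ and define, for $y_1\in\Y_1'$,
\[
\phi(y_1):=\inf\Big\{ \C(y_1,y_{2,k}) - \C(y_{1,k},y_{2,k}) + \sum_{i=0}^{k-1}\big[\C(y_{1,i+1},y_{2,i})-\C(y_{1,i},y_{2,i})\big]\Big\},
\]
where the infimum runs over all $k\ge 0$ and all chains $(y_{1,i},y_{2,i})\in\Gamma$, $i=1,\dots,k$, with $(y_{1,0},y_{2,0})=(\bar y_1,\bar y_2)$.

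The first step is to show that $\phi$ is $\C$-concave in the sense of \eqref{eq:11}. By construction $\phi(y_1)=\inf_{(y_2,\alpha)\in A}\C(y_1,y_2)-\alpha$. Here $A$ is the set of pairs $(y_{2,k},\alpha)$, where $y_{2,k}$ is the last second coordinate of an admissible chain. The number $\alpha$ collects the remaining terms, $\alpha = \C(y_{1,k},y_{2,k}) - \sum_{i=0}^{k-1}[\C(y_{1,i+1},y_{2,i})-\C(y_{1,i},y_{2,i})]$. Since $\C$ is real valued, every $\alpha$ is a real number. We also need $\phi\not\equiv-\infty$, and we check it at $\bar y_1$. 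Take any chain and append the closing term $\C(\bar y_1,y_{2,k})-\C(y_{1,k},y_{2,k})$. The resulting expression is a cyclic sum $\sum_{i=0}^k[\C(y_{1,i+1},y_{2,i})-\C(y_{1,i},y_{2,i})]$ with $y_{1,k+1}=\bar y_1$. By \eqref{eq:10} applied to the cycle, this sum is $\ge 0$. Hence $\phi(\bar y_1)\ge 0$; the empty chain gives exactly $0$, so $\phi(\bar y_1)=0$. In particular $\phi$ is not identically $-\infty$.

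The second step is the inclusion $\Gamma\subset\partial^+_\C\phi$. Let $(y_1,y_2)\in\Gamma$ and $y_1'\in\Y_1'$. Take any chain admissible for $\phi(y_1)$ and extend it by appending $(y_1,y_2)$ as the point $(y_{1,k+1},y_{2,k+1})$. The extended chain is admissible for $\phi(y_1')$, so
\[
\phi(y_1')\le \C(y_1',y_2)-\C(y_1,y_2) + \Big[\C(y_1,y_{2,k})-\C(y_{1,k},y_{2,k})+\textstyle\sum_{i<k}\cdots\Big].
\]
Taking the infimum over chains gives $\phi(y_1')\le \C(y_1',y_2)-\C(y_1,y_2)+\phi(y_1)$. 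This is exactly the inequality in \eqref{eq:13}, provided $y_1\in\operatorname{dom}\phi$.

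The main obstacle is this domain condition, namely showing $\phi(y_1)>-\infty$ for every $(y_1,y_2)\in\Gamma$. I would prove it by applying the inequality of the second step with $y_1'=\bar y_1$. This gives $0=\phi(\bar y_1)\le \C(\bar y_1,y_2)-\C(y_1,y_2)+\phi(y_1)$, hence $\phi(y_1)\ge \C(y_1,y_2)-\C(\bar y_1,y_2)>-\infty$. The right-hand side is finite because $\C$ is real valued. The argument requires some care: the inequality from the second step holds for all $y_1'$ even before we know that $y_1$ lies in the domain, since the chain extension argument never used finiteness of $\phi(y_1)$. Therefore the lower bound is legitimate, and we conclude $\Gamma\subset\partial^+_\C\phi$.
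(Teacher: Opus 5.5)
Your proof is correct. It is the classical Rockafellar chain construction, including the correct treatment of the domain condition through $\phi(\bar y_1)=0$ and the extended-real inequality $\phi(y_1')\le \C(y_1',y_2)-\C(y_1,y_2)+\phi(y_1)$. The paper gives no proof of Theorem~\ref{thm:RR}; it recalls it as the standard Rockafellar--Ruschendorf theorem from the cited optimal transport literature, which relies on essentially this argument.
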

	
	The next result is again classic in OT theory and characterizes the optimal couplings exploiting the theory of $\C$-concave functions, see for example \cite[Theorem 4.2]{ambrosio2021lectures}.
	
	\begin{theorem}\label{thm:char opt plans}
		Let $\Y_1$ and $\Y_2$ be Polish spaces, and $\mu_i \in \PP(\Y_i)$, $i=1,2$. Let $\C\colon\Y_1\times \Y_2 \to[0,+\infty]$ be lower semicontinuous and assume there exist $\rmA_i \colon \Y_i \to [0,+\infty]$ such that 
		\begin{equation}\label{eq:01}
			\rmA_i \in L^1(\mu_i) \quad \text{for } i=1,2, \qquad \C(y_1,y_2) \leq \rmA_1(y_1) + \rmA_2(y_2) \quad \forall (y_1,y_2)\in \Y_1\times \Y_2.
		\end{equation}
		Let $\Y_i':= \rmA_i^{-1}([0,+\infty))$, so that $\C\colon\Y_1'\times \Y_2' \to [0,+\infty)$, and let $\pi\in\Gamma(\mu_1,\mu_2)$. The following are equivalent:
		\begin{enumerate}
			\item $\pi\in \Gamma_{o,\C}(\mu_1,\mu_2)$;
			\item $\pi$ is concentrated on a $\C$-cyclically monotone set $\Gamma \subset \Y_1'\times \Y_2'$;
			\item there exists $\phi\colon\Y_1'\to[-\infty,+\infty)$ such that $\phi \in L^1(\mu_1)$, $\phi^{\C}\in L^1(\mu_2)$ and
			\[\int_{\Y_1'} \phi(y_1) d\mu_1(y_1) + \int_{\Y_2'} \phi^{\C}(y_2) d\mu_2(y_2) = \int_{\Y_1\times \Y_2} \C(y_1,y_2) d\pi(y_1,y_2),\]
			i.e. $\pi$ is concentrated over $\partial^+_{\C}\phi$.
		\end{enumerate}
	\end{theorem}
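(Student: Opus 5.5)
We prove the cycle of implications $1\Rightarrow 2\Rightarrow 3\Rightarrow 1$, in the standard way. Throughout we work on $\Y_i'$, which carries full $\mu_i$-measure because $\rmA_i\in L^1(\mu_i)$. Every $\pi\in\Gamma(\mu_1,\mu_2)$ has finite cost, since $\int\C\,d\pi\le\int\rmA_1d\mu_1+\int\rmA_2d\mu_2<\infty$. Optimal plans exist by Theorem \ref{thm:OT_C}.

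\emph{Step $1\Rightarrow2$.} Suppose $\pi$ is optimal but its support restricted to $\Y_1'\times\Y_2'$ is not $\C$-cyclically monotone. We argue on $\operatorname{supp}\pi$ only when $\C$ is continuous. For merely l.s.c.\ $\C$ we instead use the finiteness of the cost and the approach of Beiglböck--Goldstern--Maresch--Schachermayer. Via duality, one first shows that an optimal plan is concentrated on the set where $\phi\oplus\phi^{\C}=\C$ for some potential. An alternative route avoids this: for each $k$, consider the set $B_k\subset(\Y_1\times\Y_2)^k$ of $k$-tuples violating \eqref{eq:10} for the cyclic shift. If $\pi^{\otimes k}(B_k)>0$, we build a competitor by the cyclical rearrangement $\tilde\pi=\pi-\frac{\varepsilon}{k}\sum_i(\operatorname{pr}^i)_\sharp\lambda+\frac{\varepsilon}{k}\sum_i(\operatorname{pr}^{1,i},\operatorname{pr}^{2,i-1})_\sharp\lambda$. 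Here $\lambda=\pi^{\otimes k}\mres B_k$, suitably restricted to a subset on which the Radon--Nikodym derivatives are bounded, so that $\tilde\pi\ge0$. The competitor $\tilde\pi$ has the same marginals and strictly lower cost, a contradiction. Hence $\pi^{\otimes k}(B_k)=0$ for all $k$. The remaining step, passing from this to a single set $\Gamma$ on which $\pi$ is concentrated, is the main obstacle. I would handle it with the duality route: prove the Kantorovich duality $\min=\sup$ under \eqref{eq:01} by approximating $\C$ from below with bounded continuous costs (possible since $\C$ is l.s.c.\ and nonnegative). Then take a maximizing pair $(\phi,\phi^{\C})$ obtained by double $\C$-transform, with existence following from a Komlós-type compactness argument in $L^1$. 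Optimality then forces $\phi\oplus\phi^{\C}=\C$ $\pi$-a.e., which gives item 3 directly, and the set $\partial^+_\C\phi$ is $\C$-cyclically monotone, giving item 2.

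\emph{Step $2\Rightarrow3$.} Given $\Gamma$, Theorem \ref{thm:RR} yields a $\C$-concave $\phi$ with $\Gamma\subset\partial^+_\C\phi$. On $\Gamma$ we have $\phi(y_1)+\phi^{\C}(y_2)=\C(y_1,y_2)$ with both terms finite. Integrability follows from $\phi(y_1)\le \C(y_1,\bar y_2)-\phi^{\C}(\bar y_2)\le \rmA_1(y_1)+\rmA_2(\bar y_2)-\phi^\C(\bar y_2)$ for a fixed $(\bar y_1,\bar y_2)\in\Gamma$, together with a symmetric bound for $\phi^\C$. These give integrable upper bounds for the positive parts. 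The negative parts are then controlled because $\phi\oplus\phi^\C=\C\ge0$ holds $\pi$-a.e. Integrating the equality gives the identity in item 3.

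\emph{Step $3\Rightarrow1$.} This is the usual one-line argument. For any $\tilde\pi\in\Gamma(\mu_1,\mu_2)$, since $\phi\oplus\phi^\C\le\C$ everywhere,
\[\int\C\,d\tilde\pi\ge\int\phi\,d\mu_1+\int\phi^\C\,d\mu_2=\int\C\,d\pi,\]
so $\pi$ is optimal. Item 2 also follows from item 3, because $\partial^+_\C\phi$ is $\C$-cyclically monotone. The main technical burden, establishing duality and the existence of integrable potentials for l.s.c.\ costs, is the step I would cite from \cite[Theorem 4.2]{ambrosio2021lectures} rather than reprove.
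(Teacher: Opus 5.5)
The paper gives no proof of its own here: the theorem is quoted as classical from \cite[Theorem 4.2]{ambrosio2021lectures}, so leaning on that reference is consistent with the paper. Your $3\Rightarrow1$ and $2\Rightarrow3$ are the standard arguments: Theorem \ref{thm:RR}, the bounds $\phi\le\rmA_1+\mathrm{const}$ and $\phi^{\C}\le\rmA_2+\mathrm{const}$ through a fixed point of $\Gamma$, and negative parts controlled by $\phi\oplus\phi^{\C}=\C\ge0$ on $\Gamma$.

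As a self-contained proof, however, $1\Rightarrow2$ is missing. What you propose to cite, integrable optimal potentials for l.s.c.\ costs, is essentially item 3. The combinatorial route also cannot be finished as stated, because $\pi^{\otimes k}(B_k)=0$ for all $k$ does not force $\pi$ onto a $\C$-cyclically monotone set. Take $\Y_1=\Y_2=[0,1)$ with Lebesgue marginals, $\C(y_1,y_2)=\mathds{1}_{\{y_1\neq y_2\}}$ (l.s.c.\ and bounded), and $\pi=(\mathrm{id},T)_\sharp\mathcal L^1$ with $T(y)=y+\frac12\bmod 1$. Then $\int\C\,d\pi=1$ while the identity coupling costs $0$. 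Yet $\pi^{\otimes k}(B_k)=0$ for every $k$, since a cyclic improvement requires $y_{1,i}=T(y_{1,i-1})$ for two independent uniform variables. Your rearrangement actually shows more: $\lambda(B_k)=0$ for every $\lambda$ whose $k$ marginals all equal $\pi$. Here, for instance, the law of $((y,Ty),(Ty,y))$ charges $B_2$. Converting that into a single $\Gamma$ needs Kellerer's lemma on null sets of multi-marginal coupling classes, as in Beiglb\"ock--Goldstern--Maresch--Schachermayer. No density truncation is needed there, since $\lambda\le\pi^{\otimes k}$ already gives $(\operatorname{pr}^i)_\sharp\lambda\le\pi$.

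A shorter repair uses only the duality you already sketch, with no dual attainment and no Koml\'os argument. Since $\pi$ is optimal and $\min=\sup$, pick $(\varphi_n,\psi_n)\in C_b(\Y_1)\times C_b(\Y_2)$ with $\varphi_n\oplus\psi_n\le\C$ and $\int\varphi_n\,d\mu_1+\int\psi_n\,d\mu_2\to\int\C\,d\pi$. Then $\varepsilon_n:=\C-\varphi_n\oplus\psi_n\ge0$ and $\int\varepsilon_n\,d\pi\to0$, so $\varepsilon_{n_j}\to0$ $\pi$-a.e.\ along a subsequence. The set $\Gamma:=\{\varepsilon_{n_j}\to0\}\cap(\Y_1'\times\Y_2')$ has full $\pi$-measure and is $\C$-cyclically monotone, since reindexing the cyclic sum gives
\[\sum_{i=1}^k\C(y_{1,i},y_{2,i-1})\ \ge\ \sum_{i=1}^k\bigl(\varphi_{n_j}(y_{1,i})+\psi_{n_j}(y_{2,i})\bigr)=\sum_{i=1}^k\C(y_{1,i},y_{2,i})-\sum_{i=1}^k\varepsilon_{n_j}(y_{1,i},y_{2,i}).\]
Your $2\Rightarrow3$ then yields the integrable potential, so dual attainment comes out as a consequence rather than an input.

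One detail should be added in $2\Rightarrow3$. For merely l.s.c.\ $\C$, a $\C$-concave function need not be measurable: for example, $\inf_{y_2\in S}\mathds{1}_{\{y_1\neq y_2\}}=\mathds{1}_{S^c}$ for a non-measurable $S$. So take $\Gamma$ Borel, which is possible by inner regularity, and use the explicit Rockafellar formula. Its strict sublevel sets are projections of Borel sets, so $\phi$ is universally measurable. Then $\phi^{\C}$ is $\mu_2$-measurable because $\phi^{\C}\circ\operatorname{pr}^2=\C-\phi\circ\operatorname{pr}^1$ on $\Gamma$ (disintegrate $\pi$ with respect to $\mu_2$).
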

	
	\subsection{The \texorpdfstring{$L^p$}{}-Wasserstein space}\label{subsec: wass}
	
	Let $(\Y,\rmd)$ be a separable and complete metric space, and $p\in[1,+\infty)$. Using the notation of the previous subsection, let $\Y_1 = \Y_2 = \Y$ and $\C(y_1,y_2) = \rmd(y_1,y_2)^p$. Then, the $L^p$-Wasserstein distance between $\mu_1,\mu_2 \in \PP(\Y)$ corresponds to the $p$-root of the value \eqref{eq:00}. We will denote it as $\sfw_{p,\rmd}(\mu_1,\mu_2) \in [0,+\infty]$ and the set of optimal couplings as $\Gamma_{o,p}(\mu_1,\mu_2)$. To ensure that it is finite (and in particular, a distance) we restrict to the set of probability measures with finite $p$-moment
	\begin{equation}\label{eq: p-moment}
		\PP_p(\Y) := \left\{ \mu \in \PP(\Y) \ : \ \int_{\Y}\rmd^p(y,y_0) d\mu(y)<+\infty \text{ for some (and then all) }y_0 \in \Y \right\}.
	\end{equation}
	Notice that, with the notation of the previous sections, $\PP_p(\Y)$ coincides with $\PP_{\tta_p}(\Y)$ with $\tta_p(y):= \rmd^p(y,y_0)$. Recalling that the space $(\PP_p(\Y),\sfw_{p,\rmd})$ is a complete and separable metric space, we are allowed to iterate the previous construction: replacing the space $(\Y,\rmd)$ with $(\PP_p(\Y),\sfw_{p})$, we use it to define the $L^p$-Wasserstein-on-Wasserstein space 
	\begin{equation}\label{eq:50}
		\big(\PP_p(\PP_p(\Y)), \sfW_{p,\rmd}\big) \quad \text{ with }\quad \sfW_{p,\rmd} := \sfw_{p,\sfw_{p,\rmd}}.
	\end{equation}
	When the distance $\rmd$ is clear from the context, we simply write $\sfw_p$ and $\sfW_{p}$. See also Examples \ref{ex: wass} and \ref{ex: wass 2}.

	\section{Optimal transport between laws of random probability measures}\label{sec: OT gen cost}
	Let $\X_1$ and $\X_2$ be two Polish spaces, and $\cost\colon\X_1\times \X_2 \to [0,+\infty]$ a (non-identically $+\infty$) lower semicontinuous cost function. For all $\mu_i \in \PP(\X_i)$, denote
	\begin{equation}\label{eq:60}
		\C(\mu_1,\mu_2) := \min \left\{ \int_{\X_1\times \X_2} \cost(x_1,x_2) d\pi(x_1,x_2) \ : \ \pi \in \Gamma(\mu_1,\mu_2) \right\}.
	\end{equation}

	\begin{lemma}\label{lemma: lsc C}
		The map $\C\colon\PP(\X_1)\times \PP(\X_2) \to [0,+\infty]$ is non-indentically $+\infty$ and lower semicontinuous.
	\end{lemma}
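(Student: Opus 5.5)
The plan has two parts: show that $\C$ is not identically $+\infty$, and then prove lower semicontinuity by a compactness argument on the couplings.

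\emph{Non-triviality.} Since $\cost$ is not identically $+\infty$, there is a point $(\bar x_1,\bar x_2)$ with $\cost(\bar x_1,\bar x_2)<+\infty$. Taking $\mu_i=\delta_{\bar x_i}$, the only coupling is $\delta_{(\bar x_1,\bar x_2)}$. Hence $\C(\delta_{\bar x_1},\delta_{\bar x_2})=\cost(\bar x_1,\bar x_2)<+\infty$. Moreover $\C\ge 0$, and the minimum in \eqref{eq:60} is attained by Theorem \ref{thm:OT_C}.

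\emph{Lower semicontinuity.} Since $\PP(\X_1)\times\PP(\X_2)$ is Polish, hence metrizable, sequential lower semicontinuity suffices. Fix $\mu_{i,n}\to\mu_i$ narrowly for $i=1,2$. Pass to a subsequence realizing $\liminf_n \C(\mu_{1,n},\mu_{2,n})$, and assume this value is finite, since otherwise there is nothing to prove. By Theorem \ref{thm:OT_C}, choose optimal $\pi_n\in\Gamma_{o}(\mu_{1,n},\mu_{2,n})$.

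The key step is tightness of $\{\pi_n\}$. Each convergent sequence $\{\mu_{i,n}\}$ is relatively compact in $\PP(\X_i)$, so by Prohorov's theorem it is tight. For $\varepsilon>0$ pick compact sets $K_i\subset\X_i$ with $\mu_{i,n}(\X_i\setminus K_i)\le\varepsilon$ for all $n$. Then
\[\pi_n\big((\X_1\times\X_2)\setminus(K_1\times K_2)\big)\le 2\varepsilon,\]
so $\{\pi_n\}$ is tight. Prohorov's theorem then gives a further subsequence $\pi_n\to\pi$ narrowly. Since projections are continuous, $\operatorname{pr}^i_\sharp\pi=\lim_n \mu_{i,n}=\mu_i$, and therefore $\pi\in\Gamma(\mu_1,\mu_2)$.

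Finally, $\cost$ is lower semicontinuous and bounded below by $0$. It is therefore the increasing supremum of bounded Lipschitz (in particular continuous bounded) functions, for instance via inf-convolution with respect to a compatible metric. It follows that $\nu\mapsto\int\cost\,d\nu$ is narrowly lower semicontinuous, by monotone convergence combined with a supremum of continuous functionals. Consequently
\[\C(\mu_1,\mu_2)\le\int\cost\,d\pi\le\liminf_n\int\cost\,d\pi_n=\liminf_n\C(\mu_{1,n},\mu_{2,n}).\]

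The main point requiring care is this last lower semicontinuity of the integral functional when $\cost$ may take the value $+\infty$. The approximation by continuous bounded functions, truncated at level $k$, handles it.
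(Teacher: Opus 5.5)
Your proof is correct and follows essentially the same route as the paper: Dirac masses for non-triviality, then tightness of optimal couplings $\pi_n$, Prohorov extraction of a limit $\pi\in\Gamma(\mu_1,\mu_2)$, and narrow lower semicontinuity of $\nu\mapsto\int\cost\,d\nu$. You also spell out two steps the paper leaves implicit: tightness of the couplings from tightness of the marginals, and the approximation of the lower semicontinuous cost by bounded Lipschitz functions.
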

	
	\begin{proof}
		The first part follows noticing that, if $\cost(x_1,x_2)<+\infty$, then $\C(\delta_{x_1},\delta_{x_2}) = \cost(x_1,x_2)$. On the other hand, let $\mu_{1,n} \to \mu_1$ and $\mu_{2,n} \to \mu_2$ in the respective weak topologies and consider $\pi_n \in \Gamma_{o,\cost}(\mu_{1,n},\mu_{2,n})$. Without loss of generality, assume that $\liminf \C(\mu_{1,n},\mu_{2,n}) = \lim \C(\mu_{1,n},\mu_{2,n})$. The sequences $\{\mu_{1,n}\}_{n\in \N} $ and $\{\mu_{2,n}\}_{n\in \N}$ are tight, which gives that $\{\pi_{n}\}_{n\in \N}$ is tight as well, and in particular there exists $\pi\in \Gamma(\mu_1,\mu_2)$ such that $\pi_{n_k} \to \pi$ narrowly, for some subsequence. Then 
		\[\C(\mu_1,\mu_2) \leq \int_{\X_1\times \X_2} \cost d\pi\leq \lim_{k\to +\infty} \int_{\X_1\times \X_2} \cost d\pi_{n_k} = \liminf_{n\to+\infty} \C(\mu_{1,n},\mu_{2,n}). \qedhere\]
	\end{proof}
	
	Thus, we can iterate the optimal transport construction and consider the problem
	\begin{equation}\label{eq:61}
		\calC(\M_1,\M_2) := \min \left\{ \int_{\PP(\X_1)\times \PP(\X_2)} \C(\mu_1,\mu_2) d\Pi(\mu_1,\mu_2) \ : \ \Pi \in \Gamma(\M_1,\M_2) \right\}.
	\end{equation}
	The particular structure of this problem allows us to rephrase it as a minimization problem over the so-called \textit{(laws of) random couplings}. We say that $\ttP \in \RGamma (\M_1,\M_2) \subset \PP(\PP(\X_1\times \X_2))$ if 
	\[\operatorname{pr}^1_{\sharp\sharp}\ttP = \M_1, \quad \operatorname{pr}^2_{\sharp\sharp}\ttP = \M_2.\]
	Any random coupling $\ttP \in \RGamma (\M_1,\M_2)$ induces a usual coupling 
	\begin{equation}\label{eq:70}
		\Pi := \big(\operatorname{pr}^1_\sharp, \operatorname{pr}^2_\sharp\big)_\sharp \ttP \in \PP(\PP(\X_1)\times \PP(\X_2)).
	\end{equation}
	
	\begin{lemma}\label{lemma: rand coupl}
		Let $\ttP \in \RGamma (\M_1,\M_2)$ and $\Pi$ defined as in \eqref{eq:70}. Then $\Pi \in \Gamma(\M_1,\M_2)$ and 
		\begin{equation}\label{eq:71}
			\int_{\PP(\X_1)\times \PP(\X_2)} \C(\mu_1,\mu_2) \Pi(\mu_1,\mu_2) \leq \int_{\PP(\X_1\times \X_2)} \int_{\X_1\times \X_2} \cost(x_1,x_2) d\pi(x_1,x_2) d\ttP(\pi).
		\end{equation}
		Moreover, the following equality holds and the minimum (possibly infinite) is always attained:
		\begin{equation}\label{eq:72}
			\calC(\M_1,\M_2) = \min\left\{  \int_{\PP(\X_1\times \X_2)} \int_{\X_1\times \X_2} \cost(x_1,x_2) d\pi(x_1,x_2) d\ttP(\pi) \ : \ \ttP \in \RGamma  (\M_1,\M_2) \right\}.
		\end{equation}
		If $\mathcal{C}(\ttM_1,\ttM_2)<+\infty$, then a random coupling $\ttP$ is optimal, and we write $\ttP \in \RGamma _{o,\calC}$, if and only if $(\operatorname{pr}^1_\sharp,\operatorname{pr}^2_\sharp)_\sharp \ttP \in \Gamma_{o,\C}(\M_1,\M_2)$ and it is concentrated on the set of optimal couplings $\PP_{o,\C}(\X_1\times \X_2)$.
	\end{lemma}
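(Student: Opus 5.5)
First I check that $\Pi\in\Gamma(\M_1,\M_2)$ and prove \eqref{eq:71}. For $i=1,2$ we have $\operatorname{pr}^i\circ(\operatorname{pr}^1_\sharp,\operatorname{pr}^2_\sharp)=\operatorname{pr}^i_\sharp$ as maps on $\PP(\X_1\times\X_2)$. Hence $\operatorname{pr}^i_\sharp\Pi=\operatorname{pr}^i_{\sharp\sharp}\ttP=\M_i$. For every $\pi\in\PP(\X_1\times\X_2)$ we have $\pi\in\Gamma(\operatorname{pr}^1_\sharp\pi,\operatorname{pr}^2_\sharp\pi)$, so
\[\C(\operatorname{pr}^1_\sharp\pi,\operatorname{pr}^2_\sharp\pi)\le\int\cost\,d\pi .\]
Integrating this in $\ttP$ and using the change of variables for push-forwards gives \eqref{eq:71}. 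Measurability of $\pi\mapsto\int\cost\,d\pi$ follows because $\cost$ is an increasing limit of bounded Lipschitz functions, so the map is lower semicontinuous. The composite $\C\circ(\operatorname{pr}^1_\sharp,\operatorname{pr}^2_\sharp)$ is lower semicontinuous by Lemma \ref{lemma: lsc C}. Consequently the right-hand side of \eqref{eq:72} is always $\ge\calC(\M_1,\M_2)$.

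For the reverse inequality and attainment, I take $\Pi\in\Gamma_{o,\C}(\M_1,\M_2)$, which exists by Theorem \ref{thm:OT_C} and Lemma \ref{lemma: lsc C}. I then need a Borel map
\[S\colon\PP(\X_1)\times\PP(\X_2)\to\PP(\X_1\times\X_2)\quad\text{with}\quad S(\mu_1,\mu_2)\in\Gamma_{o,\cost}(\mu_1,\mu_2).\]
The multivalued map $(\mu_1,\mu_2)\mapsto\Gamma_{o,\cost}(\mu_1,\mu_2)$ has nonempty compact values by Theorem \ref{thm:OT_C} and Prokhorov's theorem. Its graph
\[\{(\mu_1,\mu_2,\pi):\pi\in\Gamma(\mu_1,\mu_2),\ \textstyle\int\cost\,d\pi\le\C(\mu_1,\mu_2)\}\]
is Borel, since the coupling constraint is closed and both sides of the inequality are Borel (the integral is lower semicontinuous, and $\C$ is lower semicontinuous). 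A measurable selection theorem for Borel graphs with compact ($\sigma$-compact) sections, such as Arsenin–Kunugui, then provides a Borel $S$. Alternatively, the Kuratowski–Ryll-Nardzewski theorem applies after checking upper hemicontinuity, which follows from the argument in the proof of Lemma \ref{lemma: lsc C}. If that fails, a universally measurable selection via von Neumann's theorem suffices, after modifying on a $\Pi$-null set.

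With $S$ in hand, set $\ttP:=S_\sharp\Pi$. Then $\operatorname{pr}^i_{\sharp\sharp}\ttP=\operatorname{pr}^i_\sharp\Pi=\M_i$, so $\ttP\in\RGamma(\M_1,\M_2)$. Moreover
\[\iint\cost\,d\pi\,d\ttP=\int\C\,d\Pi=\calC(\M_1,\M_2).\]
This proves \eqref{eq:72} and shows that the minimum is attained. I expect the measurable selection to be the main technical obstacle.

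For the final characterization, assume $\calC(\M_1,\M_2)<+\infty$. Let $\ttP$ be a random coupling and $\Pi$ its induced coupling, and set
\[g(\pi):=\int\cost\,d\pi-\C(\operatorname{pr}^1_\sharp\pi,\operatorname{pr}^2_\sharp\pi)\ge0 ,\]
defined wherever $\C$ is finite. The cost of $\ttP$ decomposes as $\int\C\,d\Pi+\int g\,d\ttP$. Since $\int\C\,d\Pi\ge\calC(\M_1,\M_2)$ and $\int g\,d\ttP\ge0$, the cost of $\ttP$ equals $\calC(\M_1,\M_2)$ if and only if both inequalities are equalities. The first equality means $\Pi\in\Gamma_{o,\C}(\M_1,\M_2)$. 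The second means $g=0$ $\ttP$-a.e., i.e. $\ttP$ is concentrated on $\PP_{o,\C}(\X_1\times\X_2)$. The finiteness assumption is exactly what makes this subtraction legitimate.
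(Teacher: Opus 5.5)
Your proof is correct and follows the paper's route. You get \eqref{eq:71} from $\C(\operatorname{pr}^1_\sharp\pi,\operatorname{pr}^2_\sharp\pi)\le\int\cost\,d\pi$, you set $\ttP:=S_\sharp\Pi$ for an optimal $\Pi$ and a measurable selection $S$ of optimal couplings, and you treat the equality cases for the last claim. The paper uses a universally measurable right inverse of $(\operatorname{pr}^1_\sharp,\operatorname{pr}^2_\sharp)$ on $\PP_{o,\C}(\X_1\times\X_2)$ via \cite[Theorem 6.9.1]{Bogachev07}; your Borel selection from compact sections is a slight strengthening.

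One side remark is wrong, though nothing rests on it. For a merely lower semicontinuous $\cost$, narrow limits of optimal couplings need not be optimal; that would require upper semicontinuity of $\C$. So the upper hemicontinuity needed for your Kuratowski--Ryll-Nardzewski alternative does not follow from the proof of Lemma \ref{lemma: lsc C}.
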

	
	\begin{proof}
		By the very definition of $\Pi$, it is a coupling between $\ttM_1$ and $\ttM_2$, and \eqref{eq:71} follows:
		\begin{align*}
			\int \C(\mu_1,\mu_2) \Pi(\mu_1,\mu_2) = \int \C(\operatorname{pr}^1_\sharp \pi, \operatorname{pr}^2_\sharp \pi) d\ttP(\pi) \leq \int \int \cost(x_1,x_2) d\pi(x_1,x_2) d\ttP(\pi).
		\end{align*}
		Thus, the $\leq$ in \eqref{eq:72} follows. Regarding the converse inequality, consider the map 
		\[(\operatorname{pr}^1_\sharp ,\operatorname{pr}^2_\sharp) \colon \PP_{o,\C}(\X_1\times \X_2) \to \PP(\X_1)\times \PP(\X_2).\]
		Thanks to Theorem \ref{thm:OT_C}, it is a surjective map between Borel measurable sets, and in particular there exists a right inverse map $G\colon \PP(\X_1)\times \PP(\X_2) \to \PP_{o,\C}(\X_1\times \X_2)$ that is universally measurable (see \cite[Theorem 6.9.1]{Bogachev07}). Then, given $\Pi \in \Gamma_{o,\C}(\M_1,\M_2)$, we may consider $\ttP := G_\sharp \Pi$, which gives
		\[
		\begin{aligned}
			\calC(\M_1,\M_2) = & \int \C(\mu_1,\mu_2) d\Pi(\mu_1,\mu_2) = \int \int \cost(x_1,x_2) d\big(G(\mu_1,\mu_2)\big)(x_1,x_2)  d\Pi(\mu_1,\mu_2) 
			\\
			= & \int_{\PP(\X_1\times \X_2)} \int_{\X_1\times \X_2} \cost(x_1,x_2) d\pi(x_1,x_2) d\ttP(\pi).
		\end{aligned}\] 
		The last part follows by noticing that in \eqref{eq:71} the equality holds if and only if $\ttP$ is concentrated on $\PP_{o,\C}(\X_1\times \X_2)$.
	\end{proof}
	
	\subsection{Total $\texorpdfstring{\C}{}$-cyclical monotonicity, total $\texorpdfstring{\C}{}$-superdifferential and characterization of optimal random couplings}
	In this section, we give a characterization of optimal random couplings in terms of the optimal Kantorovich potentials. We work in the following setting: let $\X_1$ and $\X_2$ be two Polish spaces, $\ttM_i \in \PP(\PP(\X_i))$ for $i = 1,2$, $\cost\colon\X_1\times \X_2 \to [0,+\infty]$ a {lower semicontinuous} function satisfying
	\begin{equation}\label{eq:100}
		\cost(x_1,x_2) \leq \tta_1(x_1) + \tta_2(x_2) \quad \forall (x_1,x_2) \in \X_1 \times \X_2
	\end{equation}
	for some Borel measurable maps $\tta_i\colon\X_i \to [0,+\infty]$, $i=1,2$.

	\begin{definition}
		We denote
		\begin{equation}\label{eq:110}
			\begin{gathered}
				\ttA_i(\mu_i) := \int_{\X_i} \tta_i d\mu_i, \qquad\PP_{\tta_i}(\X_i) := \left\{ \mu_i \in \PP(\X_i) \ : \ \ttA_i(\mu_i)<+\infty\right\},
				\\
				\PP_{\tta_1\oplus\tta_2}(\X_1\times \X_2) := \left\{ \pi \in \PP(\X_1\times \X_2) \ : \ \operatorname{pr}^i_\sharp \pi \in \PP_{\tta_i}(\X_i), \ i = 1,2 \right\}.
			\end{gathered}
		\end{equation}
	\end{definition}
	
	The function $\ttA_i\colon\PP(\X_i) \to [0,+\infty]$ is Borel measurable, see e.g. \cite[Lemma D.1]{Pinzi-Savare25}. Consequently, the sets $\PP_{\tta_i}(\X_i)\subset \PP(\X_i)$ and $\PP_{\tta_1\oplus\tta_2}(\X_1\times \X_2)$ are Borel in their respective narrow topology.

	\begin{assumption}\label{ass: integr}
		The random measures $\ttM_i \in \PP(\PP(\X_i))$ satisfy 
		\[\int_{\PP(\X_i)} \ttA_i(\mu_i) d\ttM_i(\mu_i)<+\infty.\]
	\end{assumption}

	The general results presented in Section \ref{subsec: C-conc theory} will be exploited in the following setting: for $i=1,2$
	\[\Y_i = \PP(\X_i), \quad \rmA_i = \ttA_i, \quad \Y_i' = \PP_{\tta_i}(\X_i), \quad \text{function $\C$ defined in \eqref{eq:60}}.\]

	\noindent Note that under Assumption \ref{ass: integr}, we have:
	\begin{enumerate}
		\item $\ttM_i$ is concentrated on $\PP_{\tta_i}(\X_i)$ for $i=1,2$;
		\item for all $(\mu_1,\mu_2) \in \PP_{\tta_1}(\X_1)\times \PP_{\tta_2}(\X_2)$ it holds $\C(\mu_1,\mu_2)\leq \ttA_1(\mu_1) + \ttA_2(\mu_2) <+\infty$.
	\end{enumerate}

	We now relate this setting with the Kantorovich potentials, extending the study of $\C$-concave theory in this scenario. In particular, we introduce the notion of \textit{total $\C$-superdifferential} and \textit{total $\C$-cyclical monotonicity}. The terminology is justified by \cite{CSS3,PS25convex}, which treat with the setting where $\X_1 = \X_2$ is an Hilbert space and $\cost(x_1,x_2) = |x_1 - x_2|^2$.

	\begin{definition}[Total $\C$-superdifferential]\label{def: total super}
		Let $\phi_1\colon\PP_{\tta_1}(\X_1) \to[-\infty,+\infty)$. We define its total $\C$-superdifferential as
		\begin{equation}
			\begin{aligned}
				\partial_{\ttt, \C}^+\phi_1 := \bigg\{ \pi \in \PP_{\tta_1\oplus \tta_2}(\X_1\times \X_2) \ : \ \forall \theta \in \PP(\X_1\times \X_2 \times \X_1) \text{ s.t. }(\operatorname{pr}^1,\operatorname{pr}^2)_\sharp \theta = \pi, \ \operatorname{pr}^3_\sharp\theta \in \PP_{\tta_1}(\X_1),&
				\\
				\int_{\X_1\times \X_2 \times \X_1} \cost(x_1,x_2) - \cost(x_1',x_2) d\theta(x_1,x_2,x_1') \leq \phi_1 \big(\operatorname{pr}^1_\sharp \theta\big) - \phi_1 \big( \operatorname{pr}^3_\sharp\theta \big)
				& \bigg\}.
			\end{aligned}
		\end{equation}
	\end{definition}
	
	\begin{definition}[Total $\C$-cyclical monotonicity]\label{def: total cycl mon}
		We say that $\rmF \subset \PP_{\tta_1\oplus \tta_2}(\X_1\times \X_2)$ is a total $\C$-cyclically monotone subset if for all $N\geq 1$, for all $\theta \in \PP\big((\X_1\times \X_2)^N\big)$ such that $\operatorname{pr}^n_\sharp\theta \in \rmF$ with $n\leq N$, and for all $\sigma$ permutation of $\{1,\dots,N\}$ it holds
		\begin{equation}
			\int_{(\X_1\times \X_2)^N} \sum_{i=1}^N \cost(x_{1,i},x_{2,i}) d\theta\leq   \int_{(\X_1\times \X_2)^N} \sum_{i=1}^N \cost(x_{1,i},x_{2,\sigma(i)}) d\theta
		\end{equation}
	\end{definition}
	
	The next two propositions aim at relating the notion of total $\C$-superdifferential (resp. total $\C$-cyclical monotonicity) with the classic $\C$-superdifferential (resp. $\C$-cyclical monotonicity).
	\begin{proposition}\label{prop: char tot superdiff}
		Let $\phi_1\colon\PP_{\tta_1}(\X_1) \to [-\infty,+\infty)$ be $\C$-concave and not identically $-\infty$. Then:
		\begin{enumerate}
			\item It holds
			\begin{equation}
				\partial_{\ttt,\C}^+ \phi_1 = \left\{ \pi \in \PP_{\tta_1\oplus \tta_2}(\X_1\times \X_2) \ : \ \pi \in \Gamma_{o,\cost}(\operatorname{pr}^1_\sharp\pi,\operatorname{pr}^2_\sharp \pi) , \ (\operatorname{pr}^1_\sharp \pi, \operatorname{pr}^2_\sharp \pi ) \in \partial_{\C}^+\phi_1\right\}.
			\end{equation}
			In particular, all $\pi \in \partial_{\ttt,\C}^+\phi_1$ are $\C$-optimal and 
			$
			\big( \operatorname{pr}^1_\sharp, \operatorname{pr}^2_\sharp \big)  \partial_{\ttt,\C}^+\phi_1 = \partial_{\C}^+\phi_1.
			$
			\item $\pi \in \partial_{\ttt,\C}^+\phi_1$ if and only if 
			\begin{equation}\label{eq: eq}
				\phi_1\big( \operatorname{pr}^1_\sharp \pi\big) + \phi_1^{\C}\big(\operatorname{pr}^2_\sharp \pi \big) = \int_{\X_1 \times \X_2} \cost(x_1,x_2) d\pi(x_1,x_2).\end{equation}
			
			\item $\partial_{\ttt,\C}^+\phi_1$ is totally $\C$-cyclically monotone.
		\end{enumerate}
	\end{proposition}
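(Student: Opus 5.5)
The plan is to prove point 1 by two inclusions, derive point 2 from it, and deduce point 3 from point 2 through a gluing argument. Throughout write $\mu_1=\operatorname{pr}^1_\sharp\pi$ and $\mu_2=\operatorname{pr}^2_\sharp\pi$.

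\emph{Inclusion ``$\subseteq$'' in point 1.} Let $\pi\in\partial^+_{\ttt,\C}\phi_1$. We first test with $\theta:=(\operatorname{pr}^1,\operatorname{pr}^2,\operatorname{pr}^1)_\sharp\pi$. This gives $0\le 0$, so it yields nothing, and we need richer test plans. Fix an arbitrary $\mu_1'\in\PP_{\tta_1}(\X_1)$ and any $\gamma\in\Gamma(\mu_2,\mu_1')$. Gluing $\pi$ and $\gamma$ along the common marginal $\mu_2$ produces $\theta$ with $(\operatorname{pr}^1,\operatorname{pr}^2)_\sharp\theta=\pi$ and $(\operatorname{pr}^2,\operatorname{pr}^3)_\sharp\theta=\gamma$. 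The defining inequality then reads
\[
\int\cost\,d\pi-\int\cost(x_1',x_2)\,d\gamma(x_2,x_1')\le\phi_1(\mu_1)-\phi_1(\mu_1').
\]
All integrals are finite thanks to \eqref{eq:100} and the $\tta_i$-integrability. Now choose $\gamma$ optimal, so that the second integral equals $\C(\mu_1',\mu_2)$; this is possible by Theorem~\ref{thm:OT_C}. Using the trivial bound $\C(\mu_1,\mu_2)\le\int\cost\,d\pi$, we obtain
\[
\C(\mu_1,\mu_2)-\phi_1(\mu_1)\le\int\cost\,d\pi-\phi_1(\mu_1)\le\C(\mu_1',\mu_2)-\phi_1(\mu_1')\qquad\forall\mu_1'.
\]
Taking the infimum over $\mu_1'$ gives $\int\cost\,d\pi-\phi_1(\mu_1)\le\phi_1^{\C}(\mu_2)$. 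On the other hand, $\phi_1^{\C}(\mu_2)\le\C(\mu_1,\mu_2)-\phi_1(\mu_1)$ by definition. Two things must be checked here: that $\phi_1(\mu_1)>-\infty$, which follows since otherwise the right-hand side would be $+\infty$ for any $\mu_1'$ in the domain (a slight care point), and that $\phi_1(\mu_1)<+\infty$. Combining the inequalities, all of them become equalities. Hence $\pi$ is optimal, $(\mu_1,\mu_2)\in\partial^+_\C\phi_1$, and \eqref{eq: eq} holds.

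\emph{Inclusion ``$\supseteq$'' in point 1.} Let $\pi$ be optimal with $(\mu_1,\mu_2)\in\partial^+_\C\phi_1$, and take any admissible $\theta$ with $\mu_1':=\operatorname{pr}^3_\sharp\theta$. The plan $(\operatorname{pr}^3,\operatorname{pr}^2)_\sharp\theta$ belongs to $\Gamma(\mu_1',\mu_2)$, so
\[
\int\cost(x_1',x_2)\,d\theta\ge\C(\mu_1',\mu_2)\ge\C(\mu_1,\mu_2)-\phi_1(\mu_1)+\phi_1(\mu_1')=\int\cost\,d\pi-\phi_1(\mu_1)+\phi_1(\mu_1').
\]
Rearranging gives exactly the defining inequality. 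The ``in particular'' statement follows from the existence of optimal plans (Theorem~\ref{thm:OT_C}): every pair in $\partial^+_\C\phi_1$ lies in $\PP_{\tta_1}\times\PP_{\tta_2}$ and admits an optimal $\pi$.

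\emph{Point 2.} If $\pi\in\partial^+_{\ttt,\C}\phi_1$, we showed \eqref{eq: eq} above. Conversely, assume \eqref{eq: eq}. Since $\phi_1^{\C}(\mu_2)\le\C(\mu_1,\mu_2)-\phi_1(\mu_1)\le\int\cost\,d\pi-\phi_1(\mu_1)$, equality forces both $\int\cost\,d\pi=\C(\mu_1,\mu_2)$ and $(\mu_1,\mu_2)\in\partial^+_\C\phi_1$, and point 1 applies.

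\emph{Point 3.} Let $\theta\in\PP((\X_1\times\X_2)^N)$ have marginals $\pi_n\in\partial^+_{\ttt,\C}\phi_1$, and let $\sigma$ be a permutation. Write $\mu_{1,n},\mu_{2,n}$ for the marginals of $\pi_n$. The plan $(x_{1,i},x_{2,\sigma(i)})_\sharp\theta$ couples $\mu_{1,i}$ and $\mu_{2,\sigma(i)}$, so
\[
\int\cost(x_{1,i},x_{2,\sigma(i)})\,d\theta\ge\C(\mu_{1,i},\mu_{2,\sigma(i)})\ge\phi_1(\mu_{1,i})+\phi_1^{\C}(\mu_{2,\sigma(i)}).
\]
Summing over $i$ and reindexing, the right-hand side becomes $\sum_n\bigl[\phi_1(\mu_{1,n})+\phi_1^{\C}(\mu_{2,n})\bigr]$, which equals $\sum_n\int\cost\,d\pi_n$ by point 2. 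This is the required inequality; finiteness of every term is guaranteed by point 2. The main obstacle I expect is the bookkeeping of finiteness and $\pm\infty$ values in the first inclusion, in particular ensuring that the domain of $\phi_1$ is nonempty and that $\mu_1\in\operatorname{dom}\phi_1$.
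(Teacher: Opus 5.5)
Your proof is correct and follows the paper's argument. The $\supseteq$ inclusion and point 2 are the same as the paper's. For $\subseteq$ you merge the paper's two gluing steps (optimality by contradiction, then the superdifferential inequality) into one gluing with an optimal plan towards an arbitrary $\mu_1'$, followed by a sandwich. For point 3 you simply unfold the $\C$-cyclical monotonicity of $\partial^+_\C\phi_1$ via $\phi_1(\mu_1)+\phi_1^{\C}(\mu_2)\le\C(\mu_1,\mu_2)$.

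One small slip in your finiteness check: when $\phi_1(\mu_1)=-\infty$, the term that blows up is $\phi_1(\mu_1)-\phi_1(\mu_1')=-\infty$ for $\mu_1'\in\operatorname{dom}\phi_1$, not a right-hand side equal to $+\infty$. This $-\infty$ is set against a finite left-hand side, and that is exactly why such $\pi$ is excluded.
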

	
	\begin{proof}
		\textit{1.} We first prove the inclusion $\supseteq$. Let $\pi$ be as in the right hand side, and consider a generic $\theta$ as in the definition of $\partial_{\ttt,\C}^+\phi_1$. Define $\mu_1':= \operatorname{pr}^3_\sharp \theta$, then
		\begin{align*}
			\int_{\X_1\times \X_2 \times \X_1}&  \cost(x_1,x_2) - \cost(x_1',x_2) d\theta(x_1,x_2,x_1')  =  \C(\mu_1,\mu_2) - \int_{\X_1\times \X_2 \times \X_1}  \cost(x_1',x_2) d\theta(x_1,x_2,x_1')
			\\
			\leq & \C(\mu_1,\mu_2) - \C(\mu_1',\mu_2) \leq \phi_1(\mu_1) - \phi_1(\mu_1') = \phi \big(\operatorname{pr}^1_\sharp \theta\big) - \phi \big( \operatorname{pr}^3_\sharp\theta \big).
		\end{align*}
		Regarding the other inclusion, we first prove that any $\pi \in \partial_{\ttt,\C}^+\phi_1$ is $\C$-optimal. By contradiction, assume that it is not optimal and take $\pi' \in \Gamma_{o,\cost}(\mu_1,\mu_2)$, where $\mu_i := \operatorname{pr}^i_\sharp \pi$, for $i=1,2$. Then, by the gluing lemma \cite[Lemma 5.3.2]{AGS08} we can find a measure $\theta \in \PP(\X_1\times \X_2 \times \X_1)$ such that $(\operatorname{pr}^1,\operatorname{pr}^2)_\sharp \theta = \pi$ and $(\operatorname{pr}^3,\operatorname{pr}^2)_\sharp\theta = \pi'$. Then, since $\operatorname{pr}^1_\sharp \theta = \operatorname{pr}^3_\sharp \theta$, we reach a contradiction:
		\begin{align*}
			0 \geq & \int_{\X_1\times \X_2 \times \X_1}  \cost(x_1,x_2) - \cost(x_1',x_2) d\theta(x_1,x_2,x_1') 
			\\
			= & \int_{\X_1\times \X_2}  \cost(x_1,x_2) d\pi(x_1,x_2) - \int_{\X_1\times \X_2}\cost(x_1',x_2) d\pi'(x_1',x_2) > 0.
		\end{align*}
		Now, we are left to show that $\mu_2 \in \partial_{\C}^+\phi_1(\operatorname{pr}^1_\sharp \pi)$. For all $\mu_1' \in \PP_{\tta_1}(\X_1)$, consider $\pi' \in \Gamma_{o,\cost}(\mu_1',\mu_2)$, and, again by the gluing lemma, build $\theta \in \PP(\X_1\times \X_2\times \X_1)$ such that $(\operatorname{pr}^1,\operatorname{pr}^2)_\sharp \theta = \pi$ and $(\operatorname{pr}^3,\operatorname{pr}^2)_\sharp\theta = \pi'$. Then
		\begin{align*}
			\C(\mu_1,\mu_2) - \C(\mu_1',\mu_2) = \int_{\X_1\times \X_2 \times \X_1}  \cost(x_1,x_2) - \cost(x_1',x_2) d\theta(x_1,x_2,x_1') \leq \phi_1(\mu_1) - \phi_1(\mu_1').
		\end{align*}
		
		\textit{2.} From point 1., $\pi \in \partial_{\ttt,\C}^+\phi_1$ if and only if it is optimal and the couple $(\mu_1,\mu_2) := (\operatorname{pr}^1_\sharp,\operatorname{pr}^2_\sharp)\pi$ belongs to $\partial_\C^+\phi_1$. This is equivalent to 
		\[\C(\mu_1,\mu_2) = \int_{\X_1\times \X_2} \cost(x_1,x_2) d\pi(x_1,x_2) \quad \text{ and } \quad \phi_1(\mu_1) + \phi_1^\C(\mu_2) = \C(\mu_1,\mu_2).\]
		Since the inequalities 
		\(\phi_1(\mu_1)+ \phi_1^\C(\mu_2) \leq \C(\mu_1,\mu_2) \leq \int \cost d\pi\) are always true, the latter is equivalent to \eqref{eq: eq}
		
		\textit{3.} Fix $N\geq 1$, a permutation $\sigma$ and $\theta \in \PP\big((\X_1\times \X_2)^N\big)$ such that $\operatorname{pr}^n_\sharp \theta \in \partial_{\ttt,\C}^+\phi_1$. Thanks to the first point and the fact that $\partial_{\C}^+\phi_1$ is $\C$-cyclically monotone, we have 
		\begin{align*}
			\int_{(\X_1\times \X_2)^N}& \sum_{i=1}^N \cost(x_{1,i},x_{2,i}) d\theta = \sum_{i=1}^N \C(\operatorname{pr}^{1,i}_\sharp\theta,\operatorname{pr}^{2,i}_\sharp \theta) 
			\\
			\leq  & \sum_{i=1}^N \C(\operatorname{pr}^{1,i}_\sharp\theta,\operatorname{pr}^{2,\sigma(i)}_\sharp \theta) \leq  \int_{(\X_1\times \X_2)^N} \sum_{i=1}^N \cost(x_{1,i},x_{2,\sigma(i)}) d\theta
		\end{align*}
	\end{proof}

	\begin{proposition}\label{prop: char tot mon}
		Let $\rmF \subset \PP_{\tta_1\oplus \tta_2}(\X_1\times \X_2)$ be a total $\C$-cyclically monotone subset. Then:
		\begin{enumerate}
			\item All $\pi \in \rmF$ are $\C$-optimal. 
			\item The set $(\operatorname{pr}^1_\sharp,\operatorname{pr}^2_\sharp)\rmF \subset \PP_{\tta_1}(\X_1) \times \PP_{\tta_2}(\X_2)$ is $\C$-cyclically monotone; 
			\item There exists a $\C$-concave function $\phi_1\colon\PP_{\tta_1}(\X_1)\to [-\infty,+\infty)$ such that $\rmF\subseteq \partial_{\ttt,\C}^+\phi_1$.
		\end{enumerate}
		In particular, for all $\mathfrak{F} \subset \PP_{\tta_1}(\X_1)\times \PP_{\tta_2}(\X_2)$ $\C$-cyclically monotone, the set 
		\begin{equation}\label{eq:111}
			\overline{\rmF} := \left\{ \pi \in \PP_{\tta_1\oplus \tta_2}(\X_1\times \X_2) \ : \ \pi \in \PP_{o,\C}(\X_1\times \X_2), \ \big( \operatorname{pr}^1_\sharp \pi, \operatorname{pr}^2_\sharp \pi \big) \in \mathfrak{F} \right\}
		\end{equation}
		is a total $\C$-cyclically monotone subset.
	\end{proposition}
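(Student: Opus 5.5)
The argument follows the order of the statement and relies on the gluing lemma \cite[Lemma 5.3.2]{AGS08}, on Theorem \ref{thm:RR} and on Proposition \ref{prop: char tot superdiff}.

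\textit{Point 1.} We argue by contradiction, as in the proof of Proposition \ref{prop: char tot superdiff}. Let $\pi\in\rmF$ with marginals $\mu_1,\mu_2$, and suppose $\pi$ is not $\cost$-optimal. Take $\pi'\in\Gamma_{o,\cost}(\mu_1,\mu_2)$, so that $\int\cost\,d\pi'<\int\cost\,d\pi<+\infty$ (finiteness comes from \eqref{eq:100} and $\pi\in\PP_{\tta_1\oplus\tta_2}$). We want to produce $\theta\in\PP((\X_1\times\X_2)^2)$ with both $2$-dimensional marginals equal to $\pi$ and such that the swap $\sigma=(1\,2)$ transforms the pair $(x_{1,1},x_{2,2})$ into an optimal coupling. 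Concretely, I would glue $\pi$ with $\pi'$ (over the $\X_2$-marginal) and again with $\pi$ (over the $\X_1$-marginal). This gives a measure on $\X_1\times\X_2\times\X_1\times\X_2$ under which:
\begin{itemize}
\item $(x_{1,1},x_{2,1})\sim\pi$;
\item $(x_{1,2},x_{2,2})\sim\pi$;
\item $(x_{1,2},x_{2,1})\sim\pi'$.
\end{itemize}
Then $\sum_i\cost(x_{1,i},x_{2,\sigma(i)})$ integrates to $\int\cost\,d\pi'+\int\cost(x_{1,1},x_{2,2})$, and we only control the first term. For this reason it is cleaner to use a longer cycle $N=2$ with a chain of gluings realising ``$\pi$, then $\pi'$ backwards''. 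If this does not close, the fallback is to take $N$ large, use a $\pi/\pi'$ alternating chain, and average: the uncontrolled boundary term is bounded by $\int(\tta_1+\tta_2)$ and so contributes $O(1/N)$, while the gain $N(\int\cost\,d\pi-\int\cost\,d\pi')$ grows linearly. This contradicts total monotonicity. Checking that the cyclic chain closes up correctly is the main obstacle, and I expect the averaging trick to handle it.

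\textit{Point 2.} This follows by gluing. Given $(\mu_{1,i},\mu_{2,i})=(\operatorname{pr}^1_\sharp\pi_i,\operatorname{pr}^2_\sharp\pi_i)$ with $\pi_i\in\rmF$ and a cyclic shift, choose $\gamma_i\in\Gamma_{o,\cost}(\mu_{1,i},\mu_{2,i-1})$. Gluing successively $\pi_1,\gamma_2,\pi_2,\gamma_3,\dots$ along shared marginals, we obtain $\theta$ on $(\X_1\times\X_2)^N$ with $\operatorname{pr}^n_\sharp\theta=\pi_n$ and $(x_{1,i},x_{2,i-1})\sim\gamma_i$, except possibly at the closing index. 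Again, the closing of the cycle is the delicate part. Here I would use that for $\C$-cyclical monotonicity it suffices to check cycles: break the cycle at one pair and use the same averaging argument, or argue via duplicated copies. Total monotonicity together with point 1 then gives
\[
\sum_i\C(\mu_{1,i},\mu_{2,i})=\sum_i\int\cost\,d\pi_i\le\int\sum_i\cost(x_{1,i},x_{2,i-1})\,d\theta=\sum_i\C(\mu_{1,i},\mu_{2,i-1}).
\]

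\textit{Point 3 and the final claim.} By point 2 and Theorem \ref{thm:RR} applied on $\PP_{\tta_1}(\X_1)\times\PP_{\tta_2}(\X_2)$ (where $\C$ is finite), there is a $\C$-concave $\phi_1$ whose superdifferential $\partial^+_\C\phi_1$ contains $(\operatorname{pr}^1_\sharp,\operatorname{pr}^2_\sharp)\rmF$. Point 1 shows that every $\pi\in\rmF$ is optimal, so Proposition \ref{prop: char tot superdiff}(1) gives $\rmF\subseteq\partial^+_{\ttt,\C}\phi_1$.

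For $\overline{\rmF}$, apply Theorem \ref{thm:RR} to $\mathfrak F$ to obtain a $\C$-concave $\phi_1$ with $\mathfrak F\subseteq\partial^+_\C\phi_1$. Proposition \ref{prop: char tot superdiff}(1) then yields $\overline{\rmF}\subseteq\partial^+_{\ttt,\C}\phi_1$, and the latter set is totally $\C$-cyclically monotone by Proposition \ref{prop: char tot superdiff}(3). Total monotonicity passes to subsets, so $\overline{\rmF}$ is totally $\C$-cyclically monotone.
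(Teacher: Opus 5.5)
\textbf{Same route as the paper; Point 2 is not closed as written.} Points 1 and 3 and the final claim follow the paper. There, chain gluings have a closing term that is left uncontrolled but bounded by $\ttA_1+\ttA_2$. Point 3 is Theorem \ref{thm:RR} together with Proposition \ref{prop: char tot superdiff}(1). The last claim uses Proposition \ref{prop: char tot superdiff}(1),(3) and passage to subsets. In Point 1 you can drop the $N=2$ attempt. Your fallback is exactly the paper's argument: an alternating chain with $N$ copies of $\pi$ and $N-1$ copies of $\pi'$, with the shift $i\mapsto i-1$. It gives $(N-1)\big(\int\cost\,d\pi-\C(\mu_1,\mu_2)\big)\le \ttA_1(\mu_1)+\ttA_2(\mu_2)-\int\cost\,d\pi$, which forces optimality as $N\to\infty$.

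\textbf{The gap in Point 2.} The last equality in your display is false for a single cycle. The chain gluing of $\pi_1,\gamma_2,\pi_2,\dots,\gamma_N,\pi_N$ cannot also prescribe the law of $(x_{1,1},x_{2,N})$: that would close the constraint graph into a cycle, and pairwise couplings along a cycle need not be jointly realisable. So $\int\cost(x_{1,1},x_{2,N})\,d\theta$ need not equal $\C(\mu_{1,1},\mu_{2,N})$. What total monotonicity and Point 1 actually give is
\[
\sum_{i=1}^N\C(\mu_{1,i},\mu_{2,i})\le\sum_{j=2}^N\C(\mu_{1,j},\mu_{2,j-1})+\ttA_1(\mu_{1,1})+\ttA_2(\mu_{2,N}).
\]
This alone does not yield cyclical monotonicity. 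The ``duplicated copies'' idea you mention must be carried out, and it is exactly what the paper does.

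Suppose $\varepsilon:=\sum_i\C(\mu_{1,i},\mu_{2,i})-\sum_i\C(\mu_{1,i},\mu_{2,i-1})>0$. Apply the inequality above to the $\ell N$ pairs obtained by listing $(\mu_{1,1},\mu_{2,1}),\dots,(\mu_{1,N},\mu_{2,N})$ $\ell$ times in a row. Every wrap-around link $(\mu_{1,1},\mu_{2,N})$ except the last now lies inside the chain, where it is realised by an optimal coupling. The cyclic deficits then add up to $\ell\varepsilon$, and you get
\[
\ell\varepsilon\le\ttA_1(\mu_{1,1})+\ttA_2(\mu_{2,N})-\C(\mu_{1,1},\mu_{2,N})<+\infty
\]
for every $\ell$, a contradiction. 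With this step written out, Points 2 and 3 and the final claim go through as you stated them.
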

	
	\begin{proof}
		\textit{1.} The argument is the same of the previous proof. Let $\pi \in \rmF$ and take any $\pi' \in \Gamma_{o,\cost}(\mu_1,\mu_2)$, where $\mu_i := \operatorname{pr}^i_\sharp \pi$, for $i=1,2$. Iterating the gluing lemma \cite[Lemma 5.3.2]{AGS08}, for all $N\in \N$ we can find a measure ${\theta} \in \PP\big((\X_1\times \X_2)^N\big)$ such that \[(\operatorname{pr}^{1,i},\operatorname{pr}^{2,i})_\sharp {\theta} = \pi\quad \text{and}\quad(\operatorname{pr}^{1,j+1},\operatorname{pr}^{2,j})_\sharp{\theta} = \pi'\]
		for all $i\in \{1,\dots,N\}$ and $j\in \{1,\dots,N-1\}$. Then, with the convention that $\operatorname{pr}^{2,N} = \operatorname{pr}^{2,0}$, using the permutation $i \mapsto i-1$, we have
		\begin{align*}
			0\geq & \sum_{i=1}^N \int_{(\X_1\times \X_2)^N} \cost(x_{1,i},x_{2,i}) - \cost(x_{1,i},x_{2,i-1}) d\theta 
			\\
			= &  N\int_{\X_1\times \X_2} \cost d\pi - (N-1)\int_{\X_1\times\X_2} \cost d\pi' - \int_{(\X_1\times \X_2)^N} \cost(x_{1,1},x_{2,N}) d\theta
			\\
			= & (N-1) \left( \int \cost d\pi - \C(\mu_1,\mu_2) \right)  + \left( \int \cost d\pi - \int_{(\X_1\times \X_2)^N} \cost(x_{1,1},x_{2,N}) d \theta \right)
			\\
			\geq & (N-1) \left( \int \cost d\pi - \C(\mu_1,\mu_2) \right)  + \left( \int \cost d\pi - \ttA_1(\mu_1) - \ttA_2(\mu_2) \right). 
		\end{align*}
		The second term on the right hand side is finite and does not depend on $N$. On the other hand, the factor $ \int \cost d\pi - \C(\mu_1,\mu_2)$ is non-negative, and passing to the limit as $N\to+\infty$, the previous computation gives that it must be null, yielding optimality of $\pi$.
		
		\textit{2.} Define $\mathfrak{F}:= (\operatorname{pr}^1_\sharp,\operatorname{pr}^2_\sharp) \rmF\subset \PP_{\tta_1}(\X_1)\times \PP_{\tta_2}(\X_2)$. Let $N\geq 1$ and, for $i \in \{1,\dots,N\}$, $(\mu_{1,i},\mu_{2,i}) = (\operatorname{pr}^1_\sharp, \operatorname{pr}^2_\sharp)\pi_i$ for some $\pi_i \in \rmF$. We build $\theta \in \PP\big((\X_1\times \X_2)^N\big)$ in the following way: for all $j\in\{2,\dots,N\}$, consider $\pi_j'\in \Gamma_{o,\cost}(\mu_{1,j},\mu_{2,j-1})$, and iterating the gluing lemma \cite[Lemma 5.3.2]{AGS08}, there exists a $\theta \in \PP\big((\X_1\times \X_2)^N\big)$ satisfying
		\[\big( \operatorname{pr}^{1,i}, \operatorname{pr}^{2,i} \big)_\sharp\theta = \pi_i \ \text{ for all }i\in\{1,\dots,N\}, \quad \big(\operatorname{pr}^{1,j}, \operatorname{pr}^{2,j-1} \big)_\sharp\theta = \pi_j' \ \text{ for all }j\in\{2,\dots,N\}.\]
		Then, thanks to the optimality of $\pi_i$ and $\pi_j'$, it holds (we denote $\mu_{2,0} = \mu_{2,N}$ and $x_{2,0} = x_{2,N}$)
		\begin{equation}\label{eq:opt}
			\begin{aligned}
				\sum_{i=1}^N & \C(\mu_{1,i},\mu_{2,i}) = \sum_{i=1}^N \int_{(\X_1\times \X_2)^N} \cost(x_{1,i},x_{2,i}) d \theta \leq \sum_{i=1}^N  \int_{(\X_1\times \X_2)^N} \cost(x_{1,i},x_{2,i-1}) d \theta \\
				= & \sum_{j=2}^N \C(\mu_{1,j},\mu_{2,j-1}) + \int_{(\X_1\times \X_2)^N} \cost(x_{1,1},x_{2,N}) d\theta \leq \sum_{j=2}^N \C(\mu_{1,j},\mu_{2,j-1}) + \ttA_1(\mu_{1,1}) + \ttA_2(\mu_{2,N}). 
			\end{aligned}
		\end{equation}
		
		Now, by contradiction assume that there exist $N\geq 1$ and $\big\{(\mu_{1,i},\mu_{2,i}) \ : \ i\in \{1,\dots,N\}\big\} \subset \mathfrak{F}$ such that 
		\begin{equation}\label{eq:eps}
			\varepsilon:= \sum_{i=1}^N \C(\mu_{1,i},\mu_{2,i}) - \sum_{i=1}^N  \C(\mu_{1,i},\mu_{2,i-1}) >0.
		\end{equation}
		Then, for all $\ell\geq 2$, consider $\big\{(\tilde{\mu}_{1,j}, \tilde{\mu}_{2,j}) \ : \ i\in \{ 1,\dots,\ell N\} \big\}$ defined as
		\[\tilde{\mu}_{1,j} = \mu_{1,i}, \quad \tilde{\mu}_{2,j} = \mu_{2,i} \qquad \text{ if }j = i + k N \ \text{for some }k\in\{0 ,\dots,\ell-1\}.\]
		With the previous convention, notice that $\tilde{\mu}_{2,0} = \tilde{\mu}_{2,\ell N} = \mu_{2,N}$. Combining \eqref{eq:opt} and \eqref{eq:eps} we reach a contradiction by sending $\ell \to+\infty$:
		\begin{align*}
			\ttA_{1} & (\mu_{1,1}) + \ttA_{2}(\mu_{2,N}) - \C(\mu_{1,1},\mu_{2,N}) \geq \sum_{j=1}^{\ell N} \C(\tilde{\mu}_{1,j}, \tilde{\mu}_{2,j}) - \C(\tilde{\mu}_{1,j}, \tilde{\mu}_{2,j-1}) 
			\\
			= &  \sum_{k=0}^{\ell-1} \sum_{i=1}^{N} \C({\mu}_{1,i+kN}, {\mu}_{2,i+kN}) -  \C({\mu}_{1,i+kN},{\mu}_{2,i+ kN-1}) = \sum_{k=0}^{\ell-1} \sum_{i=1}^{N} \C({\mu}_{1,i}, {\mu}_{2,i}) -  \C({\mu}_{1,i},{\mu}_{2,i-1}) = \ell \varepsilon
		\end{align*}
		
		\textit{3.} Consider $\mathfrak{F} \subset \PP_{\tta_1}(\X_1)\times \PP_{\tta_2}(\X_2)$ as in the previous point. By Theorem \ref{thm:RR}, there exists $\phi_1\colon\PP_{\tta_1}(\X_1) \to [-\infty,+\infty)$ $\C$-concave such that $\mathfrak{F} \subseteq \partial_{\C}^+\phi_1$. By the very definition of $\mathfrak{F}$, Proposition \ref{prop: char tot superdiff} and Claim \textit{1.} we conclude:
		\[
		\begin{aligned}
			\rmF \subseteq &  \left\{ \pi \in \PP_{\tta_1\oplus \tta_2}(\X_1\times \X_2) :  \pi \in \Gamma_{o,\cost}(\operatorname{pr}^1_\sharp \pi, \operatorname{pr}^2_\sharp \pi), \ (\operatorname{pr}^1_\sharp \pi, \operatorname{pr}^2_\sharp \pi) \in \mathfrak{F}  \right\}
			\\
			\subseteq &  
			\left\{ \pi \in \PP_{\tta_1\oplus \tta_2}(\X_1\times \X_2) :  \pi \in \Gamma_{o,\cost}(\operatorname{pr}^1_\sharp \pi, \operatorname{pr}^2_\sharp \pi), \ (\operatorname{pr}^1_\sharp \pi, \operatorname{pr}^2_\sharp \pi) \in \partial_{\C}^+\phi_1  \right\} = \partial_{\ttt,\C}^+\phi_1.
		\end{aligned}
		\]
		With the same argument, the last assertion follows as well.
	\end{proof}
	
	\begin{theorem}\label{thm: char opt RGamma}
		Let $\cost\colon \X_1\times \X_2 \to [0,+\infty]$ be lower semicontinuous and such that $\cost \leq \tta_1\oplus \tta_2$, for some $\tta_i \colon \X_i \to [0,+\infty]$ Borel measurable. Let $\ttM_i \in \PP(\PP(\X_i))$, $i=1,2$, be satisfying Assumption \ref{ass: integr} and denote $\C$ as in \eqref{eq:60}. 
		\\
		Let $\ttP \in \RGamma(\ttM_1,\ttM_2)$. The following are equivalent:
		\begin{enumerate}
			\item $\ttP\in \RGamma_{o,\C}(\ttM_1,\ttM_2)$;
			\item $\ttP$ is concentrated on a totally $\C$-cyclically monotone subset;
			\item there exists $\phi_1:\PP_{\tta_1}(\X_1)\to [-\infty,+\infty)$ $\C$-concave such that $\phi_1\in L^1(\ttM_1)$, $\phi^{\C}\in L^1(\ttM_2)$ and 
			\begin{equation}\label{eq:112}
				\phi_1\big( \operatorname{pr}^1_\sharp \pi\big) + \phi_1^{\C}\big(\operatorname{pr}^2_\sharp \pi \big) = \int_{\X_1 \times \X_2} \cost(x_1,x_2) d\pi(x_1,x_2) \quad \text{ for }\ttP\text{-a.e. }\pi,
			\end{equation}
			i.e. $\ttP$ is concentrated on $\partial_{\ttt,\C}^+\phi_1$.
		\end{enumerate}
	\end{theorem}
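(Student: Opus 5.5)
We prove the cycle of implications $1\Rightarrow 3\Rightarrow 2\Rightarrow 1$, reducing each step to the classical Theorem \ref{thm:char opt plans} applied on $\Y_i=\PP(\X_i)$ with $\rmA_i=\ttA_i$. The hypotheses of that theorem hold: $\C$ is lower semicontinuous by Lemma \ref{lemma: lsc C}, $\C\le \ttA_1\oplus\ttA_2$, and $\ttA_i\in L^1(\ttM_i)$ by Assumption \ref{ass: integr}. In particular $\calC(\ttM_1,\ttM_2)<+\infty$, so the characterization of optimal random couplings in Lemma \ref{lemma: rand coupl} is available.

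\emph{$1\Rightarrow 3$.} Let $\ttP$ be optimal. By Lemma \ref{lemma: rand coupl}, the coupling $\Pi:=(\operatorname{pr}^1_\sharp,\operatorname{pr}^2_\sharp)_\sharp\ttP$ belongs to $\Gamma_{o,\C}(\ttM_1,\ttM_2)$, and $\ttP$ is concentrated on $\PP_{o,\C}(\X_1\times\X_2)$. Theorem \ref{thm:char opt plans} then gives a $\C$-concave $\phi_1$ with $\phi_1\in L^1(\ttM_1)$, $\phi_1^\C\in L^1(\ttM_2)$, and $\Pi$ concentrated on $\partial^+_\C\phi_1$. If $\phi_1$ is not $\C$-concave as produced, replace it by $\phi_1^{\C\C}$, which preserves integrability and equality on the support. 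Since $\ttM_i$ is concentrated on $\PP_{\tta_i}(\X_i)$, $\ttP$-a.e. $\pi$ lies in $\PP_{\tta_1\oplus\tta_2}(\X_1\times\X_2)$. Such a $\pi$ is optimal between its marginals, and its marginal pair lies in $\partial^+_\C\phi_1$. Proposition \ref{prop: char tot superdiff}(1) then gives $\pi\in\partial^+_{\ttt,\C}\phi_1$, and point (2) of the same proposition yields \eqref{eq:112}.

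\emph{$3\Rightarrow 2$.} This is immediate from Proposition \ref{prop: char tot superdiff}(3): $\partial^+_{\ttt,\C}\phi_1$ is totally $\C$-cyclically monotone, and by (2) of that proposition, \eqref{eq:112} is exactly the statement that $\ttP$ is concentrated on it. One should also check that $\ttP$-a.e. $\pi$ lies in $\PP_{\tta_1\oplus\tta_2}$; this holds by Assumption \ref{ass: integr}.

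\emph{$2\Rightarrow 1$.} Suppose $\ttP$ is concentrated on a totally $\C$-cyclically monotone set $\rmF$. By Proposition \ref{prop: char tot mon}(1), every $\pi\in\rmF$ is $\C$-optimal, so $\ttP$ is concentrated on $\PP_{o,\C}$. By Proposition \ref{prop: char tot mon}(2), $\mathfrak F:=(\operatorname{pr}^1_\sharp,\operatorname{pr}^2_\sharp)\rmF$ is $\C$-cyclically monotone, and $\Pi$ is concentrated on it. Theorem \ref{thm:char opt plans} (2$\Rightarrow$1) then gives $\Pi\in\Gamma_{o,\C}(\ttM_1,\ttM_2)$, and Lemma \ref{lemma: rand coupl} concludes that $\ttP$ is optimal.

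The main obstacle is measurability. ``Concentrated on'' a possibly non-Borel set such as $\rmF$ or $\mathfrak F$ must be read in the outer or universally measurable sense, and we need $\Pi$ to be concentrated on $\mathfrak F$ whenever $\ttP$ is concentrated on $\rmF$. Take a Borel $B\subset\rmF$ of full $\ttP$-measure. Its image under the Borel map $(\operatorname{pr}^1_\sharp,\operatorname{pr}^2_\sharp)$ is analytic, hence universally measurable, contained in $\mathfrak F$, and of full $\Pi$-measure. The version of Theorem \ref{thm:char opt plans} being used accepts concentration on such sets, so this suffices.
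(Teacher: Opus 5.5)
Your proof is correct and uses the same ingredients as the paper: Lemma \ref{lemma: rand coupl}, the classical Theorem \ref{thm:char opt plans} applied to $\Pi=(\operatorname{pr}^1_\sharp,\operatorname{pr}^2_\sharp)_\sharp\ttP$, and Propositions \ref{prop: char tot superdiff} and \ref{prop: char tot mon}. The difference is that you run the cycle as $1\Rightarrow3\Rightarrow2\Rightarrow1$, while the paper runs it as $1\Rightarrow2\Rightarrow3\Rightarrow1$.

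The paper's route is as follows. It passes from optimality to total cyclical monotonicity through the set $\overline{\rmF}$ of \eqref{eq:111}. It then gets the potential from the total Rockafellar--Ruschendorf statement (point 3 of Proposition \ref{prop: char tot mon}). It closes with the elementary estimate $\calC(\ttM_1,\ttM_2)\le\int\!\int\cost\,d\pi\,d\ttP(\pi)=\int(\phi_1+\phi_1^{\C})\,d\Pi\le\calC(\ttM_1,\ttM_2)$.

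Your ordering never needs \eqref{eq:111} or point 3 of Proposition \ref{prop: char tot mon}. The potential, together with $\phi_1\in L^1(\ttM_1)$ and $\phi_1^{\C}\in L^1(\ttM_2)$, comes directly from the classical $1\Rightarrow3$. In the paper's $2\Rightarrow3$, that integrability has to be supplied by a separate appeal to Theorem \ref{thm:char opt plans}.

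The price is in your final step $2\Rightarrow1$. It rests on the nontrivial classical implication ``cyclically monotone $\Rightarrow$ optimal'' for $\Pi$, under the bound $\C\le\ttA_1\oplus\ttA_2$. The paper's $3\Rightarrow1$, by contrast, is a one-line duality argument.

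Your remark on measurability is a useful addition. You note that the image of a Borel full-measure subset of $\rmF$ is analytic, hence universally measurable and of full $\Pi$-measure. The paper leaves this point implicit.
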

	
	\begin{proof}
		\textit{1.}$\implies$\textit{2.} 
		It is implied by putting together Lemma \ref{lemma: rand coupl}, Theorem \ref{thm:char opt plans} and \eqref{eq:111}.
		
		\textit{2.}$\implies$\textit{3.} It is a consequence of Proposition \ref{prop: char tot mon}, Proposition \ref{prop: char tot superdiff} and Theorem \ref{thm:char opt plans}.
		
		\textit{3.}$\implies$\textit{1.} Let $\Pi := (\operatorname{pr}^1_\sharp,\operatorname{pr}^2_\sharp)_\sharp\ttP$. The optimality of $\ttP$ follows: 
		\begin{align*}
			\hspace{3cm} \calC(\ttM_1,\ttM_2) \leq & \int_{\PP(\X_1\times \X_2)} \int_{\X_1 \times \X_2} \cost(x_1,x_2) d\pi(x_1,x_2) d\ttP(\pi) 
			\\
			= & \int_{\PP(\X_1)\times \PP(\X_2)} \phi_1(\mu_1) + \phi_1^{\C}(\mu_2) d\Pi(\mu_1,\mu_2)
			\leq \calC(\ttM_1,\ttM_2). \hspace{2.8cm}\qedhere
		\end{align*}
	\end{proof}

	The previous statements extends the results presented in \cite[Sections 4 and 5]{PS25convex}. Moreover, here we provided different proofs that didn't involve a  lifting procedure. In the next section, we provide such a lifting procedure that fits this setting as well.

	\section{Lions' representations for $\PP(\X)$ and the lifted OT problems}\label{sec: Lions}
	
	\subsection{Convergence in probability}
	Let $(\X,\tau)$ be a Polish space and $(\rmQ, \cF_{\rmQ}, \bbM)$ be an atomless standard Borel probability space, that is, there exists a topology $\tau_\rmQ$ such that $(\rmQ,\tau_\rmQ)$ is Polish, $\cF_{\rmQ}$ is the generated Borel $\sigma$-algebra, and $\bbM$ is an atomless Borel probability measure. Consider then 
	\begin{equation}
		\XX := L^0(\rmQ,\cF_{\rmQ}, \bbM; \X),
	\end{equation}
	that is the space of measurable random variables $\Z\colon\rmQ \to \X$ quotiented by the $\bbM$-a.e. equivalence relation.
	
	\begin{definition}
		Let $\d$ be any complete distance on $\X$ inducing the topology $\tau$. Given $\Z_n,\Z \in \XX$, we say 
		\begin{equation}\label{eq:80}
			\Z_n \overset{\bbM_\d}{\longrightarrow} \Z 
			\iff 
			\forall \varepsilon >0 \quad \bbM\big(\d(\Z_n,\Z) \geq \varepsilon \big) \to 0. 
		\end{equation}
	\end{definition}
	
	The next lemma shows that this convergence can be metrized by a distance $D_{\bbM,\d}$ which makes $\XX$ a complete and separable metric space. Moreover, the topology induced by it does not depend on the choice of $\d$, giving that the convergence \eqref{eq:80} is induced by a Polish topology, that is usually called topology of convergence in probability.
	
	\begin{lemma}\label{lemma:top prob}
		Let $\Z_n,\Z \in \XX$. The following are equivalent:
		\begin{enumerate}
			\item $\Z_n \overset{\bbM_\d}{\longrightarrow} \Z$;
			\item $D_{\bbM,\d}(\Z_n,\Z) := \int_{\rmQ} 1\wedge \d(\Z_n,\Z) d\bbM \to 0$;
			\item for all $K\subset \X$ compact and for all $U \subset \X\times \X$ open such that $\Delta_K:=\{(x,x) \ : \ x \in K\} \subset U$, it holds
			\[\bbM\big( \Z\in K, \ (\Z_n,\Z) \notin U \big) \to 0.\]
		\end{enumerate}
		Moreover, $D_{\bbM,\d}$ is a distance that makes $\XX$ a complete and separable space. 
	\end{lemma}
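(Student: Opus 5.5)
The plan is to follow the standard theory of convergence in measure, checking each item with the bounded distance $\d\wedge 1$.

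\emph{Equivalence of 1 and 2.} Markov's inequality gives $\bbM(\d(\Z_n,\Z)\ge\varepsilon)\le \varepsilon^{-1}D_{\bbM,\d}(\Z_n,\Z)$ for $\varepsilon\le1$, hence $2\Rightarrow1$. For $1\Rightarrow2$, split
\[D_{\bbM,\d}(\Z_n,\Z)\le\varepsilon+\bbM(\d(\Z_n,\Z)\ge\varepsilon)\]
and let first $n\to\infty$, then $\varepsilon\to0$.

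\emph{Equivalence of 1 and 3.} For $1\Rightarrow3$, fix a compact $K$ and an open $U\supset\Delta_K$. By compactness of $K$ there is $\varepsilon>0$ such that $\{(x,y): x\in K,\ \d(x,y)<\varepsilon\}\subset U$; otherwise one would find $x_k\in K$ and $y_k$ with $\d(x_k,y_k)\to0$ and $(y_k,x_k)\notin U$, and a subsequence converging to some $(x,x)\in\Delta_K\subset U$ contradicts the openness of $U$. (Note that $(\Z_n,\Z)$ has the moving variable first, so I use the symmetric version of this neighbourhood.) Then
\[\bbM(\Z\in K,\ (\Z_n,\Z)\notin U)\le\bbM(\d(\Z_n,\Z)\ge\varepsilon)\to0.\]
For $3\Rightarrow1$, fix $\varepsilon,\eta>0$. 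Since $\Z_\sharp\bbM$ is a Borel probability on a Polish space, it is tight, so there is a compact $K$ with $\bbM(\Z\notin K)<\eta$. Take $U=\{\d<\varepsilon\}$, which is open and contains $\Delta_K$. Then
\[\bbM(\d(\Z_n,\Z)\ge\varepsilon)\le\eta+\bbM(\Z\in K,\ (\Z_n,\Z)\notin U),\]
and the right-hand side is eventually smaller than $2\eta$. Item 3 is phrased purely topologically, so the topology does not depend on $\d$.

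\emph{Metric properties.} $D_{\bbM,\d}$ is symmetric and satisfies the triangle inequality, since $1\wedge\d$ is a metric. It vanishes exactly on a.e.-equal pairs, which is the quotient relation. For completeness, take a Cauchy sequence and extract a subsequence with $D_{\bbM,\d}(\Z_{n_k},\Z_{n_{k+1}})<4^{-k}$. Markov's inequality gives $\bbM(\d(\Z_{n_k},\Z_{n_{k+1}})\ge2^{-k})\le2^{-k}$, so Borel--Cantelli shows that $(\Z_{n_k}(q))_k$ is $\d$-Cauchy for a.e. $q$. Completeness of $\d$ then yields an a.e. limit $\Z$, which is measurable as a pointwise limit of measurable maps into a metric space. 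Dominated convergence gives $D_{\bbM,\d}(\Z_{n_k},\Z)\to0$, and the Cauchy property upgrades this to the whole sequence.

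\emph{Separability, the main obstacle.} I will use that $(\rmQ,\cF_\rmQ)$ is standard Borel, so $\cF_\rmQ$ is countably generated: it is generated by a countable algebra $\mathcal A$. Let $\{x_j\}$ be dense in $\X$. The candidate dense set is the countable family of simple maps $\sum_{i\le m}x_{j_i}\mathbf 1_{A_i}$ with $(A_i)$ a partition of $\rmQ$ into sets of $\mathcal A$. To approximate a given $\Z$:
\begin{itemize}
\item Truncate by tightness: given $\eta>0$, pick a compact $K$ with $\bbM(\Z\notin K)<\eta$.
\item Cover $K$ by finitely many balls of radius $\eta$ centred at points $x_j$. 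This produces a measurable simple map $S$ with $D_{\bbM,\d}(S,\Z)\le2\eta$, taking the constant value $x_1$ off $K$.
\item Replace each level set $B_i$ of $S$ by a set $A_i\in\mathcal A$ with $\bbM(A_i\triangle B_i)$ small; this is possible by the approximation lemma for algebras generating a $\sigma$-algebra. The $A_i$ must then be disjointified and made to cover $\rmQ$, and this changes $D_{\bbM,\d}$ by at most the total measure of the symmetric differences.
\end{itemize}
Keeping this bookkeeping consistent is the delicate point. The remaining estimates are routine.
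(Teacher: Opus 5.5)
Your proof is correct and follows essentially the same route as the paper: for $1\Rightarrow 3$ you use compactness of $K$ to find a uniform $\varepsilon$-neighbourhood of $\Delta_K$ inside $U$ (the paper does the same with a finite cover by product balls), and for $3\Rightarrow 1$ you use tightness, where your choice $U=\{\d<\varepsilon\}$ is more direct than the paper's finite union $\bigcup_i B(x_i,\varepsilon/2)\times B(x_i,\varepsilon/2)$. For $1\Leftrightarrow 2$ and for completeness and separability, the paper cites Kallenberg and identifies $(\XX,D_{\bbM,\d})$ with the $L^1$ space for the bounded metric $1\wedge\d$; your Markov, Borel--Cantelli and countably-generated-algebra arguments are the standard proofs behind those citations, and the disjointification you flag as delicate is routine, since the new simple map agrees with $S$ outside $\bigcup_i (A_i\triangle B_i)$.
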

	
	\begin{proof}
		For the equivalence between \textit{1.} and \textit{2.} we refer to \cite[Lemma 5.2]{kallenberg1997foundations}. Regarding \textit{1.}$\implies$\textit{3.}, since $K$ is compact there exist $x_1,\dots,x_N \in K$ and $r_1,\dots,r_N >0$ such that $B(x_i,r_i) \times B(x_i,r_i) \subset U$, and $\{B(x_i, r_i/2)\}_{i\leq N}$ is a cover of $K$, where we denoted with $B(x,r)$ the ball of center $x$ and radius $r$ with respect to the distance $\d$. Let $\delta := \min r_i/2$. Then, for all $z\in K$, there exists $j\leq N$ such that $z \in B(x_j,r_j/2)$, so that if $\d(y,z)<\delta$, then $\d(y,x_j) < r_j$. Thus, $\Z \in K$ and $\d(\Z_n,\Z)<\delta$ imply $(\Z_n,\Z) \in U$, and in particular 
		\[\{\d(\Z_n,\Z) \geq \delta\} \supseteq  \{ \Z \in K, \ \d(\Z_n,\Z)\geq \delta\} \supseteq \{ \Z \in K, \ (\Z_n,\Z) \notin U\}. \]
		Evaluating them with respect to $\bbM$ and passing to the limit, we conclude.
		
		Vice versa, assume \textit{3}. Fix $\varepsilon>0$. For all $\eta>0$, there exists $K_\eta \subset X$ compact such that $\bbM(\Z \in K_\eta) \geq 1-\eta$. Moreover, there exist $x_1,\dots,x_N \in K_\eta$ such that $K_\eta$ is covered by $\{B(x_i, \varepsilon/2)\}_{i\leq N}$. Then, by assumption \[U_\eta:= \bigcup_{i\leq N} B(x_i,\varepsilon/2) \times B(x_i,\varepsilon/2) \implies \bbM(\Z \in K_\eta, \ (\Z_n,\Z) \notin U_\eta) \to 0.\]
		Moreover, if $z \in K_\eta$ and $\d(y,z)\geq\varepsilon$, then for all $i\leq N$ such that $\d(z,x_i) <\varepsilon/2$, we have $\d(y,x_i) \geq \d(y,z) - \d(z,x_i) >\varepsilon/2$, and in particular $(y,z) \notin U_\eta$. Then, for all $\varepsilon>0$ and for all $\eta>0$, we have 
		\[
		\begin{aligned}
			\limsup_{n\to+\infty} \bbM(\d(\Z_n,\Z)>\varepsilon) & \leq \bbM(\Z \notin K_\eta) + \limsup_{n\to +\infty} \bbM(\Z\in K_\eta, \d(\Z_n,Z) >\varepsilon) 
			\\
			& \leq \eta+ \limsup_{n\to+\infty} \bbM(\Z \in K_\eta, \ (\Z_n,\Z) \notin U_\eta) = \eta,
		\end{aligned}
		\]
		and the proof is concluded by arbitrariness of $\eta>0$.
		The last part follows because $(\XX,D_{\bbM,\d})$ coincides with $L^1(\rmQ,\cF_{\rmQ}, \bbM; \X)$ endowed with the $L^1$-metric induced by the distance $1\wedge \d$.
	\end{proof}
	
	\subsection{Lions' liftings}\label{subsec: lions}
	Let $\X_1$ and $\X_2$ be two Polish spaces and consider for $i=1,2$ the Polish spaces $\XX_i:= L^0(\rmQ, \cF_{\rmQ}, \bbM; \X_i)$ endowed with the topology of convergence in probability. Define the maps
	\begin{equation}\label{eq:81}
		\begin{aligned}
			\iota_1\colon \XX_1 & \to \PP(\X_1), \qquad \quad
			\iota_2\colon \XX_2  \to \PP(\X_2), \qquad \quad
			\iota_{1,2}\colon \XX_1\times \XX_2 \to \PP(\X_1\times \X_2), \qquad
			\\
			\Z_1 & \mapsto (\Z_1)_\sharp \bbM 
			\hspace{1.72cm} \Z_2 \mapsto (\Z_2)_\sharp \bbM
			\hspace{1.94cm} (\Z_1,\Z_2) \mapsto (\Z_1,\Z_2)_\sharp \bbM.
		\end{aligned}
	\end{equation}
	
	It is known that the previous maps are all surjective and continuous (see \cite{CSS2}). However, if we fix $\Z_1 \in \XX_1$, then the map
	\begin{equation}\label{eq: mp}
		\Z_2 \mapsto (\Z_1,\Z_2)_\sharp \bbM \in \PP(\X_1\times \X_2)\cap \{\pi : \operatorname{pr}^1_\sharp \pi = (\Z_1)_\sharp \bbM\}
	\end{equation}
	is not surjective in general. Indeed, if the $\sigma$-algebra generated by $\Z_1$ is the whole $\cF_{\rmQ}$, then all $\Z_2$ can be written as maps of $\Z_1$, giving that the image of $\iota_{1,2}(\Z_1,\cdot)$ is the set of deterministic couplings $\PPdet(\X_1\times \X_2)\cap \{\pi : \operatorname{pr}^1_\sharp \pi = (\Z_1)_\sharp \bbM\}$. However, if $\Z_1$ admits an independent randomization, i.e. there exists $U\colon \rmQ \to \rmQ$ independent from $\Z_1$ such that $U_\sharp \bbM = \bbM$, then the map in \eqref{eq: mp} is surjective. 
	
	Due to this last observation, we introduce a weaker lifting procedure: for $i=1,2$ we define
	\begin{equation}\label{eq:82}
		\XX_i^{\operatorname{IR}} := \left\{(\Z_i^{\operatorname{IR}},U_i) \in L^0(\rmQ,\mathcal{F}_{\rmQ}, \bbM; \X_i \times \rmQ) \ : \ \Z_i^{\operatorname{IR}} \perp\!\!\!\perp U_i, \ (U_i)_\sharp \bbM = \bbM \right\},
	\end{equation}
	together with the (surjective) maps
	\begin{equation}\label{eq:83}
		\begin{aligned}
			\iota_1^{\operatorname{IR}}\colon \XX_1^{\operatorname{IR}} & \to \PP(\X_1), \qquad \qquad
			\iota_2^{\operatorname{IR}}\colon \XX_2^{\operatorname{IR}}  \to \PP(\X_2), \qquad \qquad
			\iota_{1,2}^{\operatorname{IR}}\colon \XX_1^{\operatorname{IR}}\times \XX_2^{\operatorname{IR}} \to \PP(\X_1\times \X_2), \qquad
			\\
			(\Z_1^{\operatorname{IR}},U_1) & \mapsto (\Z_1^{\operatorname{IR}})_\sharp \bbM 
			\hspace{1.13cm} (\Z_2^{\operatorname{IR}},U_2) \mapsto (\Z_2^{\operatorname{IR}})_\sharp \bbM
			\hspace{0.88cm} (\Z_1^{\operatorname{IR}},U_1,\Z_2^{\operatorname{IR}},U_2) \mapsto (\Z_1^{\operatorname{IR}},\Z_2^{\operatorname{IR}})_\sharp \bbM.
		\end{aligned}
	\end{equation}

	The definition \eqref{eq:82} is justified by the next remark and lemma. Indeed, even if in \eqref{eq:83} the independent randomizations do not play any role, we need to keep track of them for topological reasons.
	
	\begin{remark}
		The space $\{\Z_i \in L^0(\rmQ,\mathcal{F}_\rmQ, \bbM; \X_i) : \Z_i \text{ admits an independent randomization}\}$ is not closed with respect to the topology of convergence in probability. Indeed, since all atomless standard Borel probability spaces are isomorphic, we can assume $\rmQ = [0,1]$ endowed with the Lebesgue measure. Then, define the real random variables
		\[\Z_n(q) := \frac{k}{2^n} \quad \text{ if } q\in [k2^{-n},(k+1)2^{-n}), \quad k\in\{0,\dots,2^{-n}-1\}.\]
		For all $n\in \N$, $\Z_n$ admits an independent randomization. In fact, the random variable
		\[U_n(q):= 2^{n}\big(q-k2^{-n}\big)\quad \text{ if } q\in [k2^{-n},(k+1)2^{-n}), \quad k\in\{0,\dots,2^{-n}-1\}\]
		pushes the Lebesgue measure into itself, and is independent of $\Z_n$. However, $\Z_n(q) \to q$ for all $q\in[0,1)$, and thus $Z_n$ converges in probability to the random variable $Z_\infty(q):= q$, which clearly cannot admit an independent randomization.
	\end{remark}

	\begin{lemma}
		For $i=1,2$, the space $\XX_{i}^{\operatorname{IR}}$ is closed with respect to the topology of convergence in probability in $ L^0(\rmQ,\mathcal{F}_{\rmQ}, \bbM; \X_i \times \rmQ)$.
	\end{lemma}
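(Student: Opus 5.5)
Take a sequence $(\Z_n,U_n)\in \XX_i^{\operatorname{IR}}$ converging in probability to some $(\Z,U)\in L^0(\rmQ,\cF_{\rmQ},\bbM;\X_i\times\rmQ)$; since the topology of convergence in probability is metrizable by Lemma \ref{lemma:top prob}, sequential closedness is enough. The plan is to show separately that $U_\sharp\bbM=\bbM$ and that $\Z\perp\!\!\!\perp U$. Both properties can be read off from the joint law: $(\Z_n,U_n)\in\XX_i^{\operatorname{IR}}$ exactly when $(\Z_n,U_n)_\sharp\bbM = (\Z_n)_\sharp\bbM\otimes\bbM$. So the whole proof reduces to passing this identity to the limit.

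First, I would use that convergence in probability implies convergence of laws. Concretely, the map $\iota\colon L^0(\rmQ,\cF_\rmQ,\bbM;\Y)\to\PP(\Y)$, $\W\mapsto\W_\sharp\bbM$, is continuous for any Polish $\Y$; this is the continuity of the law maps in \eqref{eq:81} quoted from \cite{CSS2}, applied with $\Y=\X_i\times\rmQ$ (with a Polish topology $\tau_\rmQ$ fixed on $\rmQ$). It can also be checked directly: for $\varphi\in C_b(\Y)$, dominated convergence along almost surely converging subsequences gives $\int\varphi(\W_n)\,d\bbM\to\int\varphi(\W)\,d\bbM$, and the subsequence principle upgrades this to the full sequence. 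Applying this to $\Y=\X_i\times\rmQ$, $\Y=\X_i$ and $\Y=\rmQ$ gives
\[(\Z_n,U_n)_\sharp\bbM\to(\Z,U)_\sharp\bbM,\qquad (\Z_n)_\sharp\bbM\to\Z_\sharp\bbM,\qquad (U_n)_\sharp\bbM\to U_\sharp\bbM .\]
Here the components converge in probability because the projections are continuous, which in the characterization \textit{3.} of Lemma \ref{lemma:top prob}, or simply with the product distance, is immediate.

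Next, since $(U_n)_\sharp\bbM=\bbM$ for every $n$, uniqueness of narrow limits gives $U_\sharp\bbM=\bbM$. For independence, $(\Z_n,U_n)_\sharp\bbM=(\Z_n)_\sharp\bbM\otimes\bbM$. Because the product map $(\mu,\nu)\mapsto\mu\otimes\nu$ is narrowly continuous, the right-hand side converges to $\Z_\sharp\bbM\otimes\bbM$. To verify this continuity, test against functions $f(x)g(q)$ with $f,g\in C_b$: these form a convergence determining class for products, and tightness of $\{\mu_n\otimes\nu\}$ follows from tightness of the marginals. Identifying the limits then yields $(\Z,U)_\sharp\bbM=\Z_\sharp\bbM\otimes\bbM$, i.e.\ $\Z\perp\!\!\!\perp U$ and $U_\sharp\bbM=\bbM$, so $(\Z,U)\in\XX_i^{\operatorname{IR}}$.

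The only step needing some care is the continuity of the product of measures under narrow convergence. The remaining points are routine, namely the continuity of the law map and the fact that the topology on $\rmQ$ is an auxiliary Polish topology generating $\cF_\rmQ$; the argument does not depend on that choice because independence and the law $\bbM$ are Borel notions.
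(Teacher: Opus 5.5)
Your proof is correct and takes essentially the same route as the paper: both pass the product structure of the joint law of $(Z_n,U_n)$ to the limit, using that convergence in probability implies convergence in law. The paper tests directly against $f(Z)g(U)$ with $f\in C_b(X_i)$, $g\in C_b(Q)$, which is the same ingredient you use to prove narrow continuity of $(\mu,\nu)\mapsto\mu\otimes\nu$; that detour is harmless but not needed.
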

	
	\begin{proof}
		For simplicity, we avoid the use of the subscript $i$. Let $(\Z_n^{\operatorname{IR}},U_n) \overset{}{\to} (\Z,U)$. Independence of $\Z_n^{\operatorname{IR}}$ and $U_n$ implies that $\mathbb{E}[f(\Z_n^{\operatorname{IR}})g(U_n)] = \mathbb{E}[f(\Z_n^{\operatorname{IR}})] \mathbb{E}[g(U_n)]$ for all $f\in C_b(\X)$ and $g\in C_b(\rmQ)$. Since convergence in probability yields convergence in distribution, passing to the limit we immediately have $\mathbb{E}[f(\Z^{\operatorname{IR}})g(U)] = \mathbb{E}[f(\Z^{\operatorname{IR}})] \mathbb{E}[g(U)]$ for all test functions, which is equivalent to independence.
		Moreover, since all the $U_n$ satisfies $(U_n)_\sharp \bbM = \bbM$ by definition, then also $U_\sharp \bbM = \bbM$.
	\end{proof}

	Consider a lower semicontinuous function $\cost\colon \X_1\times \X_2 \to [0,+\infty]$, and recall the definition of $\C \colon \PP(\X_1) \times \PP(\X_2) \to [0,+\infty]$ from \eqref{eq:60}. Define 
	\begin{equation}\label{eq:91}
		\begin{aligned}
			& \hat{\C}\colon\XX_1\times \XX_2 \to [0,+\infty], \qquad \quad \ \  \hat{\C}(\Z_1,\Z_2) := \int_{\rmQ} \cost(\Z_1,\Z_2) d\bbM,
			\\
			& \hat{\C}^{\operatorname{IR}}\colon \XX_1^{\operatorname{IR}}\times \XX_2^{\operatorname{IR}} \to [0,+\infty], \qquad \hat{\C}^{\operatorname{IR}}(\Z_1^{\operatorname{IR}},U_1,\Z_2^{\operatorname{IR}},U_2) := \int_{\rmQ} \cost(\Z_1^{\operatorname{IR}},\Z_2^{\operatorname{IR}}) d\bbM.
		\end{aligned}
	\end{equation}

	\begin{lemma}\label{lemma:lusin std Borel}
		Let $(\rmQ,\cF_{\rmQ},\bbM)$ be an atomless standard Borel probability space. Then, for all $B\in \cF_{\rmQ}$ with $\bbM(B)>0$, the triplet $(B,\cF_{\rmQ}|_B, \frac{1}{\bbM(B)}\bbM\mres B)$ is an atomless standard Borel probability space.
	\end{lemma}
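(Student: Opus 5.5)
We have to check three properties of the restricted triple $(B,\cF_{\rmQ}|_B,\bbM_B)$, with $\bbM_B:=\frac{1}{\bbM(B)}\bbM\mres B$: it is a probability measure, it is atomless, and $(B,\cF_{\rmQ}|_B)$ is a standard Borel space. The first is immediate from $\bbM(B)>0$, since $\bbM_B(B)=1$.

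Atomlessness is also direct. Suppose $A\subset B$, $A\in\cF_{\rmQ}|_B=\{C\cap B: C\in\cF_{\rmQ}\}$, with $\bbM_B(A)>0$. Then $A\in\cF_{\rmQ}$ and $\bbM(A)>0$. Since $\bbM$ is atomless, there is $A'\subset A$ with $0<\bbM(A')<\bbM(A)$. Because $A'\subset B$, dividing by $\bbM(B)$ gives $0<\bbM_B(A')<\bbM_B(A)$, so $A$ is not an atom.

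The only substantial step is showing that $(B,\cF_{\rmQ}|_B)$ is standard Borel, meaning $B$ carries a Polish topology whose Borel $\sigma$-algebra is $\cF_{\rmQ}|_B$. Fix the Polish topology $\tau_\rmQ$ generating $\cF_{\rmQ}$.

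1. Since $B$ is a Borel subset of the Polish space $(\rmQ,\tau_\rmQ)$, the classical theorem (Kechris, Thm.~13.1; see also Bogachev, Vol.~II, \S6.2) gives a finer Polish topology $\tau'\supseteq\tau_\rmQ$ on $\rmQ$ with the same Borel sets in which $B$ is clopen.
2. A clopen, hence closed, subset of a Polish space is Polish, so $(B,\tau'|_B)$ is Polish.
3. The Borel $\sigma$-algebra of a subspace topology is the trace $\sigma$-algebra. Hence
\[
\mathcal{B}(\tau'|_B)=\{C\cap B: C\in\mathcal{B}(\tau')\}=\{C\cap B: C\in\cF_{\rmQ}\}=\cF_{\rmQ}|_B .
\]

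As an alternative to this three-step argument, one can cite the standard fact that a Borel subset of a standard Borel space, with its trace $\sigma$-algebra, is standard Borel (Kuratowski; Kechris, Cor.~13.4). Putting the three properties together, $(B,\cF_{\rmQ}|_B,\bbM_B)$ is an atomless standard Borel probability space.
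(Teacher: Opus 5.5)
Your proof is correct and takes essentially the paper's route. Both reduce the standard Borel property to the classical fact that a Borel subset of a Polish space carries a finer Polish topology with the same Borel sets: the paper applies Schwartz's results on Lusin spaces directly to $B$, while you refine $\tau_{\rmQ}$ via Kechris's clopen theorem, restrict to $B$, and use the trace formula for subspace Borel $\sigma$-algebras (the paper calls atomlessness trivial; your short argument for it is fine).
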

	
	\begin{proof}
		The non-atomicity of the reference measure is trivial. On the other hand, thanks to \cite[Theorem 2, pp. 95]{Schwartz73}, $B$ endowed with the subspace topology $\tau|_B$ is a Lusin space, that is there exists $\tau'\supset \tau|_B$ topology on $B$ that makes it Polish. On the other hand, \cite[Corollary 2, pp. 101]{Schwartz73} gives that the $\sigma$-algebra generated by $\tau'$ coincides with the one generated by $\tau|_B$, which coincides with $\cF_{\rmQ}|_B = \{A \cap B \ : \ A\in \cF_{\rmQ}\}$. 
	\end{proof}
	
	\begin{lemma}\label{lemma:key}
		Assume that $\cost$ is lower semicontinuous.
		\begin{enumerate}
			\item $\C(\iota_1(\Z_1),\iota_2(\Z_2)) \leq \hat{\C}(\Z_1,\Z_2)$ and equality holds if and only if $\iota_{1,2}(\Z_1,\Z_2) \in \Gamma_{o,\cost}(\iota_1(\Z_1),\iota_2(\Z_2))$. Similarly, $$\C(\iota_1^{\operatorname{IR}}(\Z_1^{\operatorname{IR}},U_1),\iota_2^{\operatorname{IR}}(\Z_2^{\operatorname{IR}},U_2)) \leq \hat{\C}^{\operatorname{IR}}(\Z_1^{\operatorname{IR}},U_1,\Z_2^{\operatorname{IR}},U_2)$$ and equality holds if and only if $\iota_{1,2}^{\operatorname{IR}}(\Z_1^{\operatorname{IR}},U_1,\Z_2^{\operatorname{IR}},U_2) \in \Gamma_{o,\cost}(\iota_1^{\operatorname{IR}}(\Z_1^{\operatorname{IR}},U_1),\iota_2^{\operatorname{IR}}(\Z_2^{\operatorname{IR}},U_2))$.
			\item $\hat{\C}$ and $\hat{\C}^{\operatorname{IR}}$ are lower semicontinuous.
			\item For $\mu_1 \in \PP(\X_1)$, $\mu_2 \in \PP(\X_2)$ and $ (\Z_1^{\operatorname{IR}},U_1) \in (\iota^{\operatorname{IR}}_1)^{-1}(\mu_1)$, there exists $(\Z_2^{\operatorname{IR}},U_2) \in (\iota_2^{\operatorname{IR}})^{-1}(\mu_2)$ such that 
			\begin{equation}\label{eq:92}
				\C(\mu_1,\mu_2) = \hat{\C}^{\operatorname{IR}}(\Z_1^{\operatorname{IR}},U_1,\Z_2^{\operatorname{IR}},U_2).
			\end{equation}
			\item Assume that $\cost$ is continuous (still possibly taking the value $+\infty$). For all $\mu_1 \in \PP(\X_1)$, $\mu_2 \in \PP(\X_2)$ and $\Z_1 \in \iota_1^{-1}(\mu_1)$, it holds
			\begin{equation}\label{eq:93}
				\C(\mu_1,\mu_2) = \inf_{\Z_2 \in \iota_2^{-1}(\mu_2)} \hat{\C}(\Z_1,\Z_2),
			\end{equation}
			possibly infinitely valued. Moreover, if $\mu_1$ is atomless we could restrict the infimum to the $\Z_2 \in \iota_2^{-1}(\mu_2)$ for which $(\Z_1,\Z_2)_\sharp \bbM$ is deterministic, or equivalently $\Z_2$ is $\sigma(\Z_1)$-measurable.
		\end{enumerate}
	\end{lemma}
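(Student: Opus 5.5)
I would prove the four items in order; the first three are short and the fourth needs an approximation argument.

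\textit{1.} Since $(\Z_1,\Z_2)_\sharp\bbM$ has marginals $\iota_1(\Z_1)$ and $\iota_2(\Z_2)$, it is a competitor in \eqref{eq:60}. Change of variables gives $\hat\C(\Z_1,\Z_2)=\int\cost\, d\iota_{1,2}(\Z_1,\Z_2)$, so the inequality follows. Equality holds exactly when this coupling attains the minimum. The IR version is literally the same computation, since the $U_i$ do not enter $\hat\C^{\operatorname{IR}}$ or $\iota^{\operatorname{IR}}_{1,2}$.

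\textit{2.} Let $\Z_{i,n}\to\Z_i$ in probability. Then $(\Z_{1,n},\Z_{2,n})\to(\Z_1,\Z_2)$ in probability, and every subsequence has a further subsequence converging $\bbM$-a.e. By lower semicontinuity of $\cost$ and Fatou's lemma (recall $\cost\ge0$), $\hat\C(\Z_1,\Z_2)\le\liminf\hat\C(\Z_{1,n},\Z_{2,n})$ along that sub-subsequence. Applying this to a subsequence realizing the $\liminf$ gives lower semicontinuity. The IR case is identical, because $\XX_i^{\operatorname{IR}}$ carries the subspace topology of convergence in probability in $L^0(\rmQ;\X_i\times\rmQ)$.

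\textit{3.} Take $\pi\in\Gamma_{o,\cost}(\mu_1,\mu_2)$, which exists by Theorem \ref{thm:OT_C}, and disintegrate it as $\pi=\int\pi_{x_1}\,d\mu_1(x_1)$. Since $\rmQ$ is Borel isomorphic to $[0,1]$ with Lebesgue measure, there is a jointly measurable $G\colon\X_1\times\rmQ\to\X_2$ such that $G(x_1,\cdot)_\sharp\bbM=\pi_{x_1}$ (a measurable parametrization of kernels). The marginal independence of $\Z_1^{\operatorname{IR}}$ and $U_1$ is not enough here, so I would first split $U_1$ into two independent uniform components $U_1=(V,W)$ via an isomorphism $\rmQ\cong\rmQ\times\rmQ$. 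Then I set $\Z_2^{\operatorname{IR}}:=G(\Z_1^{\operatorname{IR}},V)$, so that $(\Z_1^{\operatorname{IR}},\Z_2^{\operatorname{IR}})_\sharp\bbM=\pi$.

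For $U_2$ one needs a uniform variable independent of $\Z_2^{\operatorname{IR}}$, and $W$ works because it is independent of $(\Z_1^{\operatorname{IR}},V)$. Hence $U_2:=W$, and item 1 gives \eqref{eq:92}.

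\textit{4.} Here $\le$ follows from item 1. For $\ge$, fix $\pi$ optimal and assume $\C(\mu_1,\mu_2)<\infty$ (otherwise there is nothing to prove). The aim is to produce $\Z_2$ with $\iota_2(\Z_2)=\mu_2$ and $\hat\C(\Z_1,\Z_2)\le\C+\varepsilon$. This is the main obstacle, because $\Z_1$ may generate all of $\cF_\rmQ$, leaving no room for independent randomization. The route I would try first goes through the atomless case. If $\mu_1$ is atomless, Pratelli's theorem \cite{pratelli2007equality} for continuous $\cost$ gives a map $T$ with $T_\sharp\mu_1=\mu_2$ and $\int\cost(x,T(x))\,d\mu_1\le\C+\varepsilon$. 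Then $\Z_2:=T\circ\Z_1$ is $\sigma(\Z_1)$-measurable, and this also proves the ``moreover'' part.

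For general $\mu_1$, only the atoms of $\mu_1$ need extra care. Each level set $\{\Z_1=a\}$ of an atom $a$ has positive measure, and by Lemma \ref{lemma:lusin std Borel} it is itself an atomless standard probability space. On it I can choose a variable with law $\pi_a$, the disintegration of $\pi$ at $a$, so that the atoms are coupled exactly. On the diffuse part $\{\Z_1\notin\text{atoms}\}$ I would apply Pratelli's theorem to the restricted problem between $\mu_1$ restricted to the diffuse part and the corresponding part of the second marginal of $\pi$. Gluing the pieces gives the global $\Z_2$, and letting $\varepsilon\to0$ proves \eqref{eq:93}. The points to verify are that the diffuse subproblem has optimal value at most the corresponding part of $\int\cost\,d\pi$, which holds by restricting $\pi$, and the measurability of the gluing.
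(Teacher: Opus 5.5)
Your proposal is correct and follows essentially the same route as the paper. Items 1 and 2 go through the competitor-coupling and subsequence/Fatou arguments, and item 3 realizes the disintegration kernel of an optimal plan through the independent randomization; your explicit splitting $U_1=(V,W)$ is exactly what the paper's map $F$ with $F(x_1,U_1)_\sharp\bbM=\pi_{x_1}\otimes\bbM$ encodes. Item 4 uses Pratelli's Theorem B on the diffuse part, glued with exact realizations of $\pi_{x_1^i}$ on the atom level sets $\Z_1^{-1}(\{x_1^i\})$; your remark that only the bound $\int\cost\,d\pi^{\operatorname{diff}}$ on the diffuse subproblem is needed, rather than optimality of the restricted plan, slightly streamlines that step.
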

	
	\begin{proof}
		\begin{enumerate}
			\item It is straightforward, since $\iota_{1,2}(\Z_1,\Z_2) \in \Gamma(\iota_1(\Z_1), \iota_2(\Z_2))$. Similarly for $\hat{\C}^{\operatorname{IR}}$.
			\item 
			Consider $\Z_{i,n}, \Z_i \in \XX_i$ such that $\Z_{i,n} \to \Z_i$ in $\bbM$-probability, for $i=1,2$. Without loss of generality assume that $\lim_{n\to+\infty}\hat{C}(\Z_{1,n},\Z_{2,n})$ exists and is finite. Consider a subsequence $\{n_k\}_{k\in \N}$ such that $\Z_{i,n_k} \to \Z_i$ $\bbM$-a.s. Then, by Fatou's lemma
			\[\begin{aligned}
				\hat{\C}(\Z_1,\Z_2) & \leq \int_{\rmQ} \liminf_{k\to+\infty} \cost(\Z_{1,n_k},\Z_{2,n_k}) d\bbM 
				\\
				& \leq \liminf_{k\to +\infty} \int_{\rmQ} \cost(\Z_{1,n_k},\Z_{2,n_k}) d\bbM = \lim_{n\to+\infty} \hat{C}(\Z_{1,n},\Z_{2,n}).
			\end{aligned}\]
			A similar argument yields lower semicontinuity of $\hat{\C}^{\operatorname{IR}}$.
			\item Let $(\Z_1^{\operatorname{IR}},U_1)\in\XX_{1}^{\operatorname{IR}}$. 
			Consider $\pi \in \Gamma_{o,\cost}(\mu_1,\mu_2)$ and let $\{\pi_{x_1}\}_{x_1\in \X_1}\subset \PP(\X_2)$ be its disintegration with respect to the first marginal $\mu_1$. 
			Then, there exists  $F \colon \X_1\times \rmQ \to \X_2\times \rmQ$ Borel measurable such that $F(x_1,U_1)_\sharp \bbM = \pi_{x_1}\otimes \bbM $ for $\mu_1$-a.e. $x_1\in \X_1$ (see \cite[Lemma 3.2]{kallenberg1997foundations}). Then, define 
			\((\Z_2^{ \operatorname{IR}}, U_2) := F(Z_1^{\operatorname{IR}},U_1).\) By construction, its joint law is $\mu_2\otimes \bbM$ and $\big(\Z_1^{\operatorname{IR}},\Z_2^{\operatorname{IR}}\big)_\sharp \bbM = \pi$, from which \eqref{eq:92} follows. 
			\item The atomless case is a consequence of \cite[Theorem B]{pratelli2007equality}, which yields also the last part of the statement. Also, if $\C(\mu_1,\mu_2) = +\infty$, then \eqref{eq:93} is trivial. For the general case, assume $\C(\mu_1,\mu_2)<+\infty$ and write 
			\[\mu_1 = \mu_1^{\operatorname{diff}} + \mu_1^{\operatorname{at}}, \quad \mu_1^{\operatorname{at}} = \sum_{i\in I} a_i \delta_{x^i_1},\]
			for some countable index set $I$, $\{x^i_1\}_{i\in I}\subset X_1$, $\{a_i\}_{i\in I}\subset (0,1]$ with $\sum_{i\in I} a_i \in (0,1]$. Then 
			\[\rmQ^i := \Z_1^{-1}(\{x_1^i\}), \quad \overline{\rmQ} := \rmQ \setminus \bigcup_{i\in I} \rmQ^i \quad \implies \quad \bbM(\rmQ^i) = a_i, \quad \bbM(\overline{\rmQ}) = 1 - \sum_{i\in I} a_i.\]
			Consider now $\pi \in \Gamma_{o,\cost}(\mu_1,\mu_2)$ and its disintegration $\{\pi_{x_1}\} \subset \PP(\X_2)$ with respect to $\mu_1$, giving 
			\[\pi(A\times B) = \int_{A} \pi_{x_1}(B) d\mu_1(x_1) = \int_{A} \pi_{x_1}(B) d\mu_1^{\operatorname{diff}}(x_1) + \int_{A} \pi_{x_1}(B) d\mu_1^{\operatorname{at}}(x_1) =: \pi^{\operatorname{diff}}(A\times B) + \pi^{\operatorname{at}}(A\times B).\]
			Thanks to Lemma \ref{lemma:lusin std Borel}, $(\rmQ^i, \cF_{\rmQ}|_{\rmQ^i}, \frac{1}{a_i} \bbM \mres{\rmQ^i})$ is a standard Borel space, thus for all $i\in I$ there exists $\Z_2^i : \rmQ^i \to \X_2$ measurable such that $(\Z_2^i)_\sharp \frac{1}{a_i}\bbM \mres {\rmQ^i} = \pi_{x_1^i}$. 
			Now we have two cases:
			\begin{itemize}
				\item $\mu_1^{\operatorname{diff}}\equiv 0$. Fix $x_2 \in \X_2$, and define the measurable map
				\[\Z_2 (q) = \begin{cases}
					\Z_2^{i}(q) & \text{ if }q\in \rmQ^i
					\\
					x_2 & \text{ if }q \in \overline{\rmQ}.
				\end{cases}\]
				The next computation gives that $(\Z_1,\Z_2)_\sharp \bbM = \pi$: fix $g:\X_1\times \X_2 \to [0,1]$ measurable
				\begin{align*}
					\int_{\rmQ} g(\Z_1,\Z_2) d\bbM 
					= &
					\sum_{i\in I} a_i \int_{\rmQ^i} g(x_1^i,\Z_2^i) d\frac{1}{a_i}\bbM\mres{\rmQ^i} 
					=  \sum_{i\in I} a_i \int_{\X_2} g(x_1^i,x_2) d\pi_{x_1^i}(x_2) 
					\\
					= & \int_{\X_1} \int_{\X_2} g(x_1,x_2) d\pi_{x_1}(x_2) d\mu_1(x_1)  
					=  \int_{\X_1\times \X_2} g(x_1,x_2) d\pi(x_1,x_2).
				\end{align*}
				\item $\bar{a}:=\mu_1^{\operatorname{diff}}(\X_1)>0$. We can split the measure $\mu_2$ in 
				\[\overline{\mu}_2 := \int_{\X_1} \pi_{x_1} d\mu_1^{\operatorname{diff}}(x_1), \quad \mu_2^i := a_i \pi_{x_1^i} \quad \implies \quad \mu_2 = \overline{\mu}_2 + \sum_{i\in I} \mu_2^i.\]
				It is clear that $\frac{1}{\bar{a}} \pi^{\operatorname{diff}}$ is $\C$-optimal between $\frac{1}{\bar{a}} \mu_1^{\operatorname{diff}}$ and $\frac{1}{\bar{a}}\overline{\mu}_2$, otherwise we could find a coupling between $\mu_1$ and $\mu_2$ with lower transport cost than the optimal one. Then, since the transport cost $\C(\frac{1}{\bar{a}} \mu_1^{\operatorname{diff}},\frac{1}{\bar{a}}\overline{\mu}_2)<+\infty$, applying again Lemma \ref{lemma:lusin std Borel} and \cite[Theorem B]{pratelli2007equality}, for all $\varepsilon>0$ there exists $T_\varepsilon:\X_1 \to \X_2$ such that $\big(T_\varepsilon \circ \Z_1|_{\overline{\rmQ}}\big)_\sharp \big(\frac{1}{\overline{a}}\bbM\mres{\overline{\rmQ}}\big) = \frac{1}{\overline{a}}\overline{\mu}_2$ and 
				\[\int_{\overline{\rmQ}} \cost(\Z_1, T_\varepsilon\circ \Z_1) d\frac{1}{\bar{a}}\bbM\mres{\overline{\rmQ}} \leq \C\left(\frac{1}{\bar{a}} \mu_1^{\operatorname{diff}},\frac{1}{\bar{a}}\overline{\mu}_2\right) + \frac{1}{\bar{a}} \varepsilon.\]
				Define then the measurable random variable
				\[\Z_{2, \varepsilon}(q) := \begin{cases}
					\Z_2^{i}(q) & \text{ if }q\in \rmQ^i
					\\
					T_\varepsilon \circ \Z_1(q) & \text{ if }q \in \overline{\rmQ},
				\end{cases}\]
				for which it can be easily proved that $(\Z_{2,\varepsilon})_\sharp\bbM = \mu_2$ and $\hat{\C}(\Z_1,\Z_{2,\varepsilon}) \leq \C(\mu_1,\mu_2) + \varepsilon$:
				\begin{align*}
					\hspace{0.5cm}\hat{\C}(\Z_1,\Z_{2,\varepsilon}) = & 
					\bar{a}\int_{\overline{\rmQ}} \cost(\Z_1,T_\varepsilon\circ \Z_1) d \frac{1}{\bar{a}} \bbM + \sum_{i\in I} a_i\int_{\rmQ^i} \cost(x_1^i, \Z_2^i) d\frac{1}{a_i}\bbM 
					\\
					\leq &
					\bar{a} \left(\C(\frac{1}{\bar{a}} \mu_1^{\operatorname{diff}}, \frac{1}{\bar{a}} \overline{\mu}_2) + \frac{1}{\bar{a}}\varepsilon\right) + \sum_{i\in I} a_i \int_{Q^i} \cost(x_1^i, x_2) d\pi_{x_1^i}(x_2)
					\\
					= & \int_{\X_1\times \X_2} \cost(x_1,x_2) d\pi^{\operatorname{diff}}(x_1,x_2) + \int_{\X_1\times \X_2} \cost(x_1,x_2) d\pi^{\operatorname{at}}(x_1,x_2) + \varepsilon
					\\
					= & \C(\mu_1,\mu_2) + \varepsilon. \hspace{9.4cm}  \qedhere
				\end{align*}
			\end{itemize}
		\end{enumerate}
	\end{proof}

	It is worth observing that the last claim in the previous lemma may fail if $\cost$ is assumed to be only lower semicontinuous, as the following counterexample shows. Consider $\X_1 = \X_2 = \R^2$ and 
	\[\cost\big((x_1,y_1), (x_2,y_2)\big) = \mathds{1}_{\{y_1 \neq y_2\}}, \quad \mu_1:= \cH^1|_{\{0\}\times[0,1]}, \quad \mu_2 = \frac{1}{2}\cH^1|_{\{-1\}\times[0,1]} + \frac{1}{2}\cH^1|_{\{1\}\times[0,1]}. \] Clearly, $\cost$ is lower semicontinuous, since $\{(x_1,y_1,x_2,y_2)  :  y_1 = y_2\} \subset \R^4$ is closed, and $\C(\mu_1,\mu_2) = 0$ considering the coupling $\pi := \int_0^1 \delta_{(0,x)}\otimes \big(\frac{1}{2}\delta_{(-1,x)} + \frac{1}{2} \delta_{(1,x)} \big) dx$. 
	\\
	Then, consider $(\rmQ,\cF_{\rmQ}, \bbM) = ([0,1], \cB, \lambda)$ the standard unit interval with the Borel $\sigma$-algebra and the Lebesgue measure, and define $\Z_1(q) := (0,q)$. As observed above, $\Z_1$ generates the whole $\sigma$-algebra $\cB$, and in particular any $\Z_2$ such that $(\Z_2)_\sharp \lambda = \mu_2$ is of the form $T\circ \Z_1$ for some measurable $T: \R^2 \to \R^2$ such that $T_\sharp \mu_1 = \mu_2$. On the other hand, for all such $T's$, saying that $T=(T_1,T_2)$, it holds $T_2(0,q) \neq q$ for $\lambda$-a.e. $q\in [0,1]$, giving that $\hat{\C}(\Z_1,\Z_2)\geq 1$ for all $\Z_2$ such that $(\Z_2)_\sharp \lambda = \mu_2$. Indeed, by contradiction, assume there exists $A\subset [0,1]$ with $\lambda(A)>0$ such that $T_2(0,q) = q$ for all $q\in A$. Without loss of generality, we can assume that $T_1(0,q) = 1$ for all $q\in A$, up to substitute it with one of the two sets $A^{\pm} := \{q\in A \ : \ T_1(0,q) = \pm 1\}$. Then
	\[\frac{\lambda(A)}{2} = \mu_2(\{1\}\times A) = \mu_1(T^{-1}(\{1\}\times A))\geq \mu_1(\{0\}\times A) = \lambda(A),\]
	since $\{0\}\times A \subseteq T^{-1}(\{1\}\times A)$, thus we reached a contradiction. 
	
	Combining Lemma \ref{lemma:key} and Lemma \ref{lemma:top prob}, we obtain the following.
	
	\begin{corollary}
		Let $(\X,\tau)$ be a Polish space and $\d$ a distance inducing $\tau$ that makes $X$ a complete metric space. Then, the map 
		\[\iota\colon L^0(\rmQ, \cF_{\rmQ}, \bbM ; \X) \to \PP(\X), \quad \iota(\Z) = \Z_\sharp \bbM \]
		is $1$-Lipschitz, endowing the domain and codomain, respectively, with $D_{\bbM,\d}$ and $\sfw_{1,\d\wedge 1}$. Moreover, for all $\mu_1,\mu_2 \in \PP(\X)$ and for all $\Z_1 \in \iota^{-1}(\mu_1)$, there exists a sequence $\{\Z_{2,n}\}\subset \iota^{-1}(\mu_2)$ such that 
		\[D_{\bbM,\d}(\Z_1,\Z_{2,n}) \to \sfw_{1,\d\wedge 1}(\mu_1,\mu_2) \quad \text{ as }n\to +\infty.\]
	\end{corollary}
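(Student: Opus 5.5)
The plan is to apply Lemma \ref{lemma:key} to the specific cost $\cost := 1\wedge \d$ on $\X_1=\X_2=\X$. Since $\d$ is a complete distance inducing $\tau$, this cost is continuous, bounded and finite, so every statement of Lemma \ref{lemma:key} is available, including the continuous-cost claim \eqref{eq:93}. With this choice, $\hat{\C}(\Z_1,\Z_2)=\int_\rmQ 1\wedge\d(\Z_1,\Z_2)\,d\bbM = D_{\bbM,\d}(\Z_1,\Z_2)$ and $\C(\mu_1,\mu_2)=\sfw_{1,\d\wedge 1}(\mu_1,\mu_2)$, directly from the definitions \eqref{eq:60} and \eqref{eq:91}. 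Here $1\wedge\d$ is itself a distance on $\X$, so $\sfw_{1,\d\wedge1}$ is a genuine (finite) Wasserstein distance on $\PP(\X)$.

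For the Lipschitz property, I take $\Z,\Z'\in L^0(\rmQ,\cF_\rmQ,\bbM;\X)$ and use point 1 of Lemma \ref{lemma:key}. It gives
\[
\sfw_{1,\d\wedge1}(\iota(\Z),\iota(\Z')) = \C(\iota(\Z),\iota(\Z')) \le \hat{\C}(\Z,\Z') = D_{\bbM,\d}(\Z,\Z').
\]
Equivalently, this is just the remark that $(\Z,\Z')_\sharp\bbM$ is an admissible coupling of the two laws.

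For the approximation statement, I fix $\mu_1,\mu_2\in\PP(\X)$ and $\Z_1\in\iota^{-1}(\mu_1)$, and apply point 4 of Lemma \ref{lemma:key}. This yields
\[
\sfw_{1,\d\wedge1}(\mu_1,\mu_2)=\C(\mu_1,\mu_2)=\inf_{\Z_2\in\iota^{-1}(\mu_2)}\hat{\C}(\Z_1,\Z_2),
\]
and the value is finite because the cost is bounded by $1$. I then choose $\Z_{2,n}\in\iota^{-1}(\mu_2)$ with $\hat{\C}(\Z_1,\Z_{2,n})\le \C(\mu_1,\mu_2)+1/n$. The first part gives the matching lower bound $D_{\bbM,\d}(\Z_1,\Z_{2,n})\ge \sfw_{1,\d\wedge1}(\mu_1,\mu_2)$, so the convergence $D_{\bbM,\d}(\Z_1,\Z_{2,n}) \to \sfw_{1,\d\wedge 1}(\mu_1,\mu_2)$ follows.

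No step is really an obstacle: the substantive work, handling atoms of $\mu_1$ and applying Pratelli's theorem on the atomless part, is already contained in point 4 of Lemma \ref{lemma:key}. The only things to check are that $1\wedge\d$ satisfies the continuity hypothesis of that point and that $D_{\bbM,\d}$ coincides with $\hat{\C}$ for this cost, as identified in Lemma \ref{lemma:top prob}. Both are immediate.
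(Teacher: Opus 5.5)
Your proposal is correct and follows the paper's own route: the corollary is stated as a direct consequence of Lemma~\ref{lemma:key} and Lemma~\ref{lemma:top prob}. Concretely, Claims 1 and 4 of Lemma~\ref{lemma:key} are specialized to the continuous bounded cost $1\wedge\d$, for which $\hat{\C}=D_{\bbM,\d}$ and $\C=\sfw_{1,\d\wedge1}$, exactly as you do.
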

	
	A similar result holds for the indpendent randomization case. For a fixed Polish space $\X$, we define $\XX^{\operatorname{IR}}$ in the obvious way.
	
	\begin{corollary}
		Let $(\X,\tau)$ be a Polish space and $\d$ a distance inducing the product topology of $\X\times \rmQ$ and that makes it a complete metric space. Then, the map 
		\[\iota^{\operatorname{IR}}\colon \XX^{\operatorname{IR}} \to \PP(\X), \quad \iota(\Z^{\operatorname{IR}},U) = \Z^{\operatorname{IR}}_\sharp \bbM \]
		is $1$-Lipschitz, endowing the domain and codomain, respectively, with $D_{\bbM,\d\oplus \d_\rmQ}$ and $\sfw_{1,\d\wedge 1}$. Moreover, for all $\mu_1,\mu_2 \in \PP(\X)$ and for all $(\Z_1^{\operatorname{IR}},U_1) \in (\iota^{\operatorname{IR}})^{-1}(\mu_1)$, there exists $(\Z_{2}^{\operatorname{IR}},U_2) \in (\iota^{\operatorname{IR}})^{-1}(\mu_2)$ such that 
		\[D_{\bbM,\d\oplus \d_\rmQ}\big((\Z_1^{\operatorname{IR}},U_1),(\Z_{2}^{\operatorname{IR}},U_2)\big) = \sfw_{1,d\wedge 1}(\mu_1,\mu_2).\]
	\end{corollary}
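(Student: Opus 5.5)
The plan is to separate the two claims: the Lipschitz bound, which is soft, and the equality, which needs an explicit construction of $(\Z_2^{\operatorname{IR}},U_2)$ in which the $\rmQ$-component contributes nothing.

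\emph{Lipschitz bound.} Given $(\Z_1^{\operatorname{IR}},U_1),(\Z_2^{\operatorname{IR}},U_2)\in\XX^{\operatorname{IR}}$, the law $(\Z_1^{\operatorname{IR}},\Z_2^{\operatorname{IR}})_\sharp\bbM$ is a coupling of $\iota^{\operatorname{IR}}(\Z_1^{\operatorname{IR}},U_1)$ and $\iota^{\operatorname{IR}}(\Z_2^{\operatorname{IR}},U_2)$. Therefore
\[
\sfw_{1,\d\wedge1}(\mu_1,\mu_2)\le \int_\rmQ 1\wedge \d(\Z_1^{\operatorname{IR}},\Z_2^{\operatorname{IR}})\,d\bbM\le \int_\rmQ 1\wedge\big(\d(\Z_1^{\operatorname{IR}},\Z_2^{\operatorname{IR}})+\d_\rmQ(U_1,U_2)\big)\,d\bbM ,
\]
and the right-hand side is $D_{\bbM,\d\oplus\d_\rmQ}$. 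This is exactly item 1 of Lemma \ref{lemma:key}, applied to the bounded continuous cost $\d\wedge1$.

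\emph{Equality.} Since the Lipschitz bound already gives $\ge$, it suffices to find $(\Z_2^{\operatorname{IR}},U_2)\in(\iota^{\operatorname{IR}})^{-1}(\mu_2)$ with two properties:
\begin{itemize}
\item $(\Z_1^{\operatorname{IR}},\Z_2^{\operatorname{IR}})_\sharp\bbM\in\Gamma_{o,\d\wedge1}(\mu_1,\mu_2)$;
\item $U_2=U_1$ holds $\bbM$-a.e.
\end{itemize}
With these, the second inequality above becomes an equality, and by item 1 of Lemma \ref{lemma:key} so does the first.

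I would build $\Z_2^{\operatorname{IR}}$ following the proof of item 3 of Lemma \ref{lemma:key}. Fix $\pi\in\Gamma_{o,\d\wedge1}(\mu_1,\mu_2)$ with disintegration $\{\pi_{x_1}\}$. Take a Borel $G\colon\X\times\rmQ\to\X$ with $G(x_1,\cdot)_\sharp\bbM=\pi_{x_1}$ for $\mu_1$-a.e. $x_1$ (\cite[Lemma 3.2]{kallenberg1997foundations}). The natural candidate is $\Z_2^{\operatorname{IR}}:=G(\Z_1^{\operatorname{IR}},V)$, where $V$ has law $\bbM$ and is independent of $\Z_1^{\operatorname{IR}}$; then $(\Z_1^{\operatorname{IR}},\Z_2^{\operatorname{IR}})_\sharp\bbM=\pi$. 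The admissibility condition $\Z_2^{\operatorname{IR}}\perp\!\!\!\perp U_2=U_1$ holds provided $V$ is chosen independent of the pair $(\Z_1^{\operatorname{IR}},U_1)$. Indeed, $\Z_1^{\operatorname{IR}}\perp\!\!\!\perp U_1$ by assumption, so $(\Z_1^{\operatorname{IR}},V)\perp\!\!\!\perp U_1$, and hence $\Z_2^{\operatorname{IR}}\perp\!\!\!\perp U_1$. Measurability and the marginal conditions then follow exactly as in Lemma \ref{lemma:key}.

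\emph{Main obstacle.} Everything hinges on producing $V$ with law $\bbM$ independent of $(\Z_1^{\operatorname{IR}},U_1)$. The hypotheses on $\XX^{\operatorname{IR}}$ only guarantee randomness independent of $\Z_1^{\operatorname{IR}}$, namely $U_1$ itself. The first thing I would try is to split $U_1$ through a Borel isomorphism $\rmQ\cong\rmQ\times\rmQ$ (for instance binary digit interleaving on $[0,1]$) into two independent uniform components $(U_1',V)$, both independent of $\Z_1^{\operatorname{IR}}$. One would then use $V$ inside $G$ and keep $U_1'$ (re-uniformized) as the new randomization. The catch is that the output randomization is then $U_1'$ rather than $U_1$, so $\d_\rmQ(U_1,U_1')$ is no longer zero. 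Closing this gap, either by exploiting extra randomness already present in $\rmQ$ or by a limiting argument in which the discrepancy $\d_\rmQ(U_1,U_1^{(n)})$ tends to $0$ in probability while independence is preserved, is the step I expect to be delicate. If only the limiting version goes through, the attainment in the statement would have to be recovered through the lower semicontinuity in item 2 of Lemma \ref{lemma:key} and the closedness of $\XX^{\operatorname{IR}}$ proved above.
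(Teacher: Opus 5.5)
Your proof of the $1$-Lipschitz bound is correct, and you have located the right difficulty. The equality part, however, has a genuine gap that cannot be closed, because the attainment claim is false as stated. Nothing in the definition of $\XX^{\operatorname{IR}}$ provides a $V$ with law $\bbM$ independent of the pair $(\Z_1^{\operatorname{IR}},U_1)$, since that pair may generate the whole $\sigma$-algebra.

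Here is a counterexample. Take $\rmQ=[0,1]$ with Lebesgue measure, $\X=\R$, $\mu_1=\delta_0$, $\mu_2=\frac12\delta_0+\frac12\delta_{1/2}$ and $(\Z_1^{\operatorname{IR}},U_1)=(0,\mathrm{id})\in\XX^{\operatorname{IR}}$.
\begin{itemize}
\item For every $(\Z_2^{\operatorname{IR}},U_2)\in(\iota^{\operatorname{IR}})^{-1}(\mu_2)$ one has $\int_\rmQ 1\wedge|\Z_2^{\operatorname{IR}}|\,d\bbM=\frac14=\sfw_{1,\d\wedge1}(\mu_1,\mu_2)$.
\item So equality forces the integrand $1\wedge\big(|\Z_2^{\operatorname{IR}}(q)|+|q-U_2(q)|\big)$ to vanish for a.e. $q\in A:=\{\Z_2^{\operatorname{IR}}=0\}$, i.e. $U_2(q)=q$ on $A$.
\item Then $\bbM(A\cap U_2^{-1}(A))=\bbM(A)=\frac12$.
\item But $\Z_2^{\operatorname{IR}}\perp\!\!\!\perp U_2$ and $(U_2)_\sharp\bbM=\bbM$ give $\bbM(A\cap U_2^{-1}(A))=\bbM(A)^2=\frac14$, a contradiction.
\end{itemize}
Only the fact that $\d\oplus\d_\rmQ$ is a distance dominating $\d$ is used, so a max-type combination fails as well.

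This also rules out your fallback via lower semicontinuity and closedness. There is no compactness to extract a limit in probability of the approximating pairs. Moreover, by closedness of $\XX^{\operatorname{IR}}$ and continuity of $\iota^{\operatorname{IR}}$ and $D_{\bbM,\d\oplus\d_\rmQ}$, any minimizing sequence in the fibre converging in probability would produce a minimizer, which does not exist here.

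The paper gives no separate argument; it presents the statement as the analogue of the preceding corollary via Lemma \ref{lemma:key}, Claim 3. That construction sets $(\Z_2^{\operatorname{IR}},U_2):=F(\Z_1^{\operatorname{IR}},U_1)$, which consumes $U_1$ and outputs a fresh $U_2$. It gives exact equality for $\hat{\C}^{\operatorname{IR}}$ with cost $\d\wedge1$, i.e. $\int_\rmQ 1\wedge\d(\Z_1^{\operatorname{IR}},\Z_2^{\operatorname{IR}})\,d\bbM=\sfw_{1,\d\wedge1}(\mu_1,\mu_2)$. It does not control $\d_\rmQ(U_1,U_2)$, so it meets the same obstruction you found.

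What can actually be proved is one of two things:
\begin{itemize}
\item exact equality for the pseudo-distance that ignores the $\rmQ$-components;
\item the approximate version, in the style of the preceding corollary.
\end{itemize}
The latter is exactly what your splitting idea gives once made quantitative:
\begin{itemize}
\item Take a Borel isomorphism $h\colon\rmQ\to[0,1]$ pushing $\bbM$ to Lebesgue measure, and write $h(U_1)=(k+s)/n$ with $k\in\{0,\dots,n-1\}$ and $s\in[0,1)$.
\item Split $s$ into independent uniforms $(s',s'')$ by interleaving binary digits.
\item Set $U_2^{(n)}:=h^{-1}((k+s')/n)$ and $\Z_2^{(n)}:=G(\Z_1^{\operatorname{IR}},h^{-1}(s''))$.
\item Then $(\Z_1^{\operatorname{IR}},U_2^{(n)},h^{-1}(s''))$ has product law $\mu_1\otimes\bbM\otimes\bbM$. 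Hence $(\Z_2^{(n)},U_2^{(n)})\in(\iota^{\operatorname{IR}})^{-1}(\mu_2)$ and $(\Z_1^{\operatorname{IR}},\Z_2^{(n)})_\sharp\bbM=\pi$.
\item Since $|h(U_1)-h(U_2^{(n)})|\le 1/n$, Lusin's theorem for $h^{-1}$ gives $U_2^{(n)}\to U_1$ in probability.
\item Therefore $D_{\bbM,\d\oplus\d_\rmQ}\big((\Z_1^{\operatorname{IR}},U_1),(\Z_2^{(n)},U_2^{(n)})\big)\to\sfw_{1,\d\wedge1}(\mu_1,\mu_2)$.
\end{itemize}

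Exact attainment does hold in the cases your first construction covers. One is when a uniform $V$ independent of $(\Z_1^{\operatorname{IR}},U_1)$ exists. The other is when some optimal $\pi$ is induced by a map $\rmt$: take $\Z_2^{\operatorname{IR}}=\rmt(\Z_1^{\operatorname{IR}})$ and $U_2=U_1$.
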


	\subsection{Continuous cost: \texorpdfstring{$\C$}{}-concavity and \texorpdfstring{$\hat{\C}$}{}-concavity}\label{subsec: lions cont}

	In this section, we relate the OT problems \eqref{eq:61} and \eqref{eq:72}, to the OT problem associated to the cost $\hat{\C}$, with an underlying \textit{continuous} function $\cost:\X_1\times \X_2 \to [0,+\infty]$ that satisfies \eqref{eq:100}.

	We define the Borel set
	\begin{equation}\label{eq:repr cont}
		\XX_{i,\tta_i} := \left\{\Z_i \in \XX_i \ : \ \int_{\rmQ} \tta_i(\Z_i) d\bbM <+\infty\right\}.
	\end{equation}

	\noindent Notice that $\Z_i \in \XX_{i,\tta_i}$ if and only if $\iota_i\in \PP_{\tta_i}(\X_i)$, for $i=1,2$.	
	Thanks to the lifting procedure, we can also find a natural correspondence between Kantorovich potentials. 
	For all $\phi_i\colon\PP_{\tta_i}(\X_i) \to [-\infty,+\infty]$, define its lifting $\hat{\phi}_i\colon\XX_{i,\tta_i}\to [-\infty,+\infty]$ as 
	\begin{equation}\label{eq:120}
		\hat{\phi}_i(\Z_i):= \phi_i(\iota_i(\Z_i)) = \phi_i\big((\Z_i)_\sharp \bbM \big).
	\end{equation}
	Moreover, for all $\rmF\subset \PP(\X_1,\times \X_2)$ we define
	\[\hat{\rmF}:= \left\{ (\Z_1,\Z_2) \ : \ \iota_{1,2}(\Z_1,\Z_2) \in \rmF \right\} \subseteq \XX_1\times \XX_2.\]
	Note that $\hat{\phi}_i$ (resp. $\rmF$) is \textit{law-invariant}, in the sense that if $(\Z_i)_\sharp\bbM = (\W_i)_\sharp \bbM$, then $\hat{\phi}_i(\Z_i) = \hat{\phi}_i(\W_i)$ (resp. if $(\Z_1,\Z_2)_\sharp\bbM = (\W_1,\W_2)_\sharp\bbM$, then $(\Z_1,\Z_2)\in \rmF$ if and only if $(\W_1,\W_2) \in \rmF$). Moreover, if $\phi_1\colon\PP_{\tta_1}(\X_1) \to [-\infty, +\infty)$ and $\rmF \subset \PP_{\tta_1\oplus\tta_2}(\X_1\times \X_2)$, then 
	\begin{itemize}
		\item $\phi_1^{\C}\colon \PP_{\tta_2}(\X_2) \to [-\infty,+\infty)$, $\hat{\phi}_1\colon\XX_{1,\tta_1}\to [-\infty,+\infty)$ and $\hat{\phi}_1^{\hat{\C}}\colon \XX_{2,\tta_2}\to [-\infty,+\infty)$;
		\item $\hat{\rmF}\subset \XX_{1,\tta_1}\times \XX_{2,\tta_2}$.
	\end{itemize}
	
	\begin{proposition}\label{prop: lift cont}
		Let $\phi_1:\PP_{\tta_1}(\X_1) \to [-\infty, +\infty)$ be not identically $-\infty$. Then 
		\begin{enumerate}
			\item $\widehat{\phi_1^{\C}} = \hat{\phi}_1^{\hat{\C}}$;
			\item $\phi_1$ is $\C$-concave if and only if $\hat{\phi}_1$ is $\hat{\C}$-concave;
			\item $(\Z_1,\Z_2)\in \partial_{\hat{\C}}^+ \hat{\phi}_1$ if and only if $\big(\iota_1(\Z_1),\iota_2 (\Z_2)\big) \in \partial_{\C}^+\phi_1$ and $\hat{\C}(\Z_1,\Z_2) = \C\big(\iota_1(\Z_1), \iota_2(\Z_2)\big)$;
			\item $\rmF \subset \PP_{\tta_1\oplus\tta_2}(\X_1\times \X_2)$ is totally $\C$-cyclically monotone if and only if $\hat{\rmF}$ is $\hat{\C}$-cyclically monotone;
			\item $\widehat{\partial_{\ttt,\C}^+\phi_1} = \partial_{\hat{\C}}^+ \hat{\phi}_1$, that is $(\Z_1,\Z_2) \in \partial_{\hat{\C}}^+ \hat{\phi}_1$ if and only if $(\Z_1,\Z_2)_\sharp \bbM \in \partial_{\ttt,\C}^+\phi_1$.
		\end{enumerate}
	\end{proposition}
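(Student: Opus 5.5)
The whole argument rests on Lemma \ref{lemma:key}(4): since $\cost$ is continuous, for every $\Z_2\in\XX_{2,\tta_2}$ and every $\mu_1\in\PP_{\tta_1}(\X_1)$ we have $\C(\mu_1,\iota_2(\Z_2))=\inf_{\Z_1\in\iota_1^{-1}(\mu_1)}\hat\C(\Z_1,\Z_2)$. The lemma is stated with the roles of the two spaces swapped, but the proof is symmetric. Together with point 1 of that lemma and law invariance of $\hat\phi_1$, each claim reduces to a statement about $\C$. I will prove the five claims in order.

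\emph{Claim 1.} Write $\mu_2=\iota_2(\Z_2)$ and group the infimum defining $\hat\phi_1^{\hat\C}(\Z_2)$ by the law $\mu_1=\iota_1(\Z_1)$. Since $\hat\phi_1$ is constant on fibres and $\iota_1$ is onto $\PP_{\tta_1}(\X_1)$ from $\XX_{1,\tta_1}$, we get
\[
\hat\phi_1^{\hat\C}(\Z_2)=\inf_{\mu_1}\Big(\inf_{\Z_1\in\iota_1^{-1}(\mu_1)}\hat\C(\Z_1,\Z_2)-\phi_1(\mu_1)\Big)=\inf_{\mu_1}\C(\mu_1,\mu_2)-\phi_1(\mu_1)=\phi_1^{\C}(\mu_2).
\]
Points where $\phi_1=-\infty$ contribute $+\infty$ and can be ignored.

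\emph{Claim 2.} I use that $\C$-concavity is equivalent to $\phi=\phi^{\C\C}$, and similarly for $\hat\C$. Applying Claim 1 twice, once with $\hat\C$ and once with the transposed cost $\hat\C^{\top}$ (the symmetric version of Lemma \ref{lemma:key}(4)), gives $\hat\phi_1^{\hat\C\hat\C}=\widehat{\phi_1^{\C\C}}$. Hence $\hat\phi_1=\hat\phi_1^{\hat\C\hat\C}$ if and only if $\phi_1=\phi_1^{\C\C}$, because $\iota_1$ is surjective.

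One point needs care here: $\phi_1^{\C}$ may take the value $-\infty$, and is $+\infty$ if $\phi_1\equiv-\infty$, which the hypothesis excludes. The definition of $\C$-concavity through \eqref{eq:11} also allows an arbitrary index set $A$. I will handle this by working directly with \eqref{eq:11}. If $\phi_1=\inf_{A}\C(\cdot,\mu_2)-\alpha$, then $\hat\phi_1=\inf_{\hat A}\hat\C(\cdot,\Z_2)-\alpha$, where $\hat A$ collects all lifts of elements of $A$; this uses Lemma \ref{lemma:key}(4) again. The converse follows by pushing through $\phi^{\C\C}$ as above.

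\emph{Claim 3.} $(\Z_1,\Z_2)\in\partial^+_{\hat\C}\hat\phi_1$ means
\[
\hat\phi_1(\Z_1)+\hat\phi_1^{\hat\C}(\Z_2)=\hat\C(\Z_1,\Z_2).
\]
By Claim 1 the left side equals $\phi_1(\mu_1)+\phi_1^\C(\mu_2)$, with $\mu_i=\iota_i(\Z_i)$. We always have
\[
\phi_1(\mu_1)+\phi_1^\C(\mu_2)\le\C(\mu_1,\mu_2)\le\hat\C(\Z_1,\Z_2),
\]
the second inequality by Lemma \ref{lemma:key}(1). Equality between the extremes therefore holds if and only if both inequalities are equalities, which is exactly the stated condition. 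Finiteness comes from the integrability built into $\XX_{i,\tta_i}$.

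\emph{Claim 5.} Combine Claim 3 with Proposition \ref{prop: char tot superdiff}(1) and the equality case of Lemma \ref{lemma:key}(1): $\hat\C(\Z_1,\Z_2)=\C(\mu_1,\mu_2)$ holds if and only if $\iota_{1,2}(\Z_1,\Z_2)$ is $\cost$-optimal. This is stated for $\C$-concave $\phi_1$; for general $\phi_1$, characterization \eqref{eq: eq} suffices, since it was derived using only the inequality chain.

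\emph{Claim 4.} This is the step I expect to be the main obstacle.

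For the direction ($\Rightarrow$), take $(\Z_{1,i},\Z_{2,i})\in\hat\rmF$ for $i\le N$ and set $\theta:=(\Z_{1,1},\Z_{2,1},\dots,\Z_{1,N},\Z_{2,N})_\sharp\bbM$. Its pair marginals lie in $\rmF$, and the integrals in Definition \ref{def: total cycl mon} are exactly the $\hat\C$ sums, so $\hat\rmF$ is $\hat\C$-cyclically monotone.

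For the direction ($\Leftarrow$), fix $\theta$ with pair marginals in $\rmF$. Since $(\rmQ,\bbM)$ is atomless standard Borel, there is a random vector in $(\X_1\times\X_2)^N$ with law $\theta$. Its components $(\Z_{1,i},\Z_{2,i})$ belong to $\hat\rmF$, and the $\hat\C$-cyclical monotonicity inequality for this vector is the integral inequality required by Definition \ref{def: total cycl mon}. Equivalently, one can prove ($\Leftarrow$) via Claim 5 together with Propositions \ref{prop: char tot mon} and \ref{thm:RR}, but the direct realization argument is simpler.

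The delicate bookkeeping in this claim is making sure that the components land in $\XX_{i,\tta_i}$ and that $+\infty$ values of $\cost$ are handled. I would address both by noting that all marginals are in $\PP_{\tta_i}$, so every cost appearing is bounded by $\tta_1\oplus\tta_2$ and is integrable.
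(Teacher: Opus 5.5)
Your proof is correct. Claims 1--4 are essentially the paper's argument. The paper also silently uses Lemma~\ref{lemma:key}, Claim 4, with the roles of $\Z_1$ and $\Z_2$ exchanged, so flagging that is good. For Claim 3 you go through Claim 1 and the identity $\phi_1(\mu_1)+\phi_1^{\C}(\mu_2)=\C(\mu_1,\mu_2)$ rather than writing out the two inequality chains, but the content is the same.

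The genuine difference is Claim 5. You compose Claim 3, the equality case of Lemma~\ref{lemma:key}(1) and Proposition~\ref{prop: char tot superdiff}(1). The paper instead argues directly:
\begin{itemize}
\item for the reverse inclusion, it tests $\partial^+_{\ttt,\C}\phi_1$ with $\theta=(\Z_1,\Z_2,\Z_1')_\sharp\bbM$;
\item for the forward inclusion, it bounds $\int\cost(x_1',x_2)\,d\theta\ge\C(\operatorname{pr}^3_\sharp\theta,\iota_2(\Z_2))$ and minimises the lifted superdifferential inequality over $\Z_1'\in\iota_1^{-1}(\operatorname{pr}^3_\sharp\theta)$.
\end{itemize}
Your route is shorter, but it makes the lifted statements depend on Proposition~\ref{prop: char tot superdiff}. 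The paper wants them independent, so that they give an alternative proof of the results of Section~\ref{sec: OT gen cost}; for that purpose your Claim 5 would be circular.

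Your reason for dropping $\C$-concavity is also not correct as written. \eqref{eq: eq} is itself derived from item 1 of Proposition~\ref{prop: char tot superdiff}, so it cannot remove the hypothesis. What is true is that the gluing-lemma proof of item 1 never uses $\C$-concavity, only $\phi_1\not\equiv-\infty$. The paper's direct argument needs neither the gluing lemma nor this remark.

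Finally, the alternative you mention for Claim 4 via Theorem~\ref{thm:RR} does not go through as stated. A potential produced by Rockafellar--Ruschendorf on the lifted space need not be law-invariant, so Claim 5 does not apply to it. This is harmless, since you do not rely on it.
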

	
	\begin{proof}
		\textit{1.} Thanks to Lemma \ref{lemma:key}, Claim 4., for all $\Z_2 \in \XX_{2,\tta_2}$ we have
		\begin{align*}
			\widehat{\phi_1^{\C}}(\Z_2) = & \phi_{1}^{\C}\big(\iota_2(\Z_2)\big) = \inf_{\mu_1\in \PP_{\tta_1}(\X_1)}\C(\mu_1,\iota_2(\Z_2)) - \phi_1(\mu_1) 
			\\
			= & \inf_{\mu_1\in \PP_{\tta_1}(\X_1)} \inf_{\Z_1\in \iota_{1}^{-1}(\mu_1)}\hat{\C}(\Z_1,\Z_2) - \hat{\phi}_1(\Z_1) = \inf_{\Z_1\in \XX_{1,\tta_1}}\hat{\C}(\Z_1,\Z_2) - \hat{\phi}_1(\Z_1) = \hat{\phi}_1^{\hat{\C}}(\Z_2).
		\end{align*}
		
		\textit{2.} $\phi_1$ is $\C$-concave if and only if $\phi_1^{\C\C}=\phi_1$, if and only if $\widehat{\phi_1^{\C\C}} = \hat{\phi}_1$. Applying twice the previous point, $\widehat{\phi_1^{\C\C}} = {\hat{\phi}_1^{\hat{\C}\hat{\C}}}$, and it is equal to $\hat{\phi}_1$ if and only if $\hat{\phi}_1$ is $\hat{\C}$-concave.
		
		\textit{3.} Let $(\Z_1,\Z_2)\in \XX_{1,\tta_1}\times \XX_{2,\tta_2}$ such that $\big(\iota_1(\Z_1),\iota_2 (\Z_2)\big) \in \partial_{\C}^+\phi_1$ and $\hat{\C}(\Z_1,\Z_2) = \C\big(\iota_1(\Z_1), \iota_2(\Z_2)\big)$. Then, for all $\Z_1' \in \XX_{1,\tta_1}$ it holds
		\[\begin{aligned}
			\hat{\C}(\Z_1,\Z_2) - \hat{\phi}_1(\Z_1) & = \C\big(\iota_1(\Z_1),\iota_2(\Z_2)\big) - \phi_1(\iota_1(\Z_1)) 
			\\
			&\leq \C\big(\iota_1(\Z_1'), \iota_2(\Z_2)) - \phi_{1}(\iota_1(\Z_1')) \leq \hat{\C}(\Z_1',\Z_2) - \hat{\phi}_1(\Z_1').
		\end{aligned}\]
		Vice versa, assume that $(\Z_1,\Z_2)\in \partial_{\hat{\C}}^+ \hat{\phi}_1$. Then, applying Lemma \ref{lemma:key}, for all $\mu_1'\in \PP_{\tta_1}(\X_1)$ we have
		\begin{align*}
			\C\big( \iota_1(\Z_1),\iota_2(\Z_2) \big) & -  \phi_1(\iota_1(\Z_1)) \leq \hat{\C}(\Z_1,\Z_2) - \hat{\phi_1}(\Z_1) 
			\\
			\leq &\inf_{\Z_1'\in \iota_1^{-1}(\mu_1')} \hat{\C}(\Z_1',\Z_2) - \hat{\phi}_1(\Z_1') = \C(\mu_1',\iota_2(\Z_2)) - \phi_1(\mu_1').
		\end{align*}
		Finally, the previous computation with $\mu_1'=\iota_1(\Z_1)$ yields that all the inequalities are actually equalities, and in particular $\C\big(\iota_1(\X_1,\iota_2(\X_2))\big) = \hat{\C}(\Z_1,\Z_2)$.
		
		\textit{4.} Assume $\rmF$ is total $\C$-cyclically monotone. Let $N\geq 1$, $\sigma$ permutation of $\{1,\dots,N\}$ and $(\Z_{1,i},\Z_{2,i}) \in \hat{\rmF}$. Then, defining $\theta := \big((\Z_{1,1},\Z_{2,1}),\dots, (\Z_{1,N},\Z_{2,N})\big)_\sharp \bbM$, we have 
		\begin{equation}\label{eq:ccc}
			\sum_{i=1}^N \hat{\C}(\Z_{1,i},\Z_{2,i}) - \hat{\C}(\Z_{1,i},\Z_{2,\sigma(i)}) = \int_{(\X_1\times \X_2)^N} \sum_{i=1}^N \cost(x_{1,i},x_{2,i}) - \cost(x_{1,i}, x_{2,\sigma(i)}) d\theta \leq 0.
		\end{equation}
		
		Vice versa, if $\hat{\rmF}$ is $\hat{\C}$-cyclically monotone, consider $N\geq 1$, $\sigma$ permutation of $\{1,\dots,N\}$ and  $\theta \in \PP((\X_1\times \X_2)^N)$ such that $(\operatorname{pr}^{1,i},\operatorname{pr}^{2,i})_\sharp \theta \in \rmF$. Then, there exists $(\Z_{1,i},\Z_{2,i}) \in \XX_1\times \XX_2$ such that $\theta = \big((\Z_{1,1},\Z_{2,1}),\dots, (\Z_{1,N},\Z_{2,N})\big)_\sharp \bbM$. By definition of $\hat{\rmF}$, we have that $(\Z_{1,i},\Z_{2,i}) \in \hat{\rmF}$, and then \eqref{eq:ccc} yields that $\rmF$ is totally $\C$-cyclically monotone.

		\textit{5.} Let $(\Z_1,\Z_2) \in \partial_{\hat{\C}}^+ \hat{\phi}_1$. Consider $\theta \in \PP(\X_1\times \X_2\times \X_{1})$ such that the projection on the first two marginals gives the law of $(\Z_1,\Z_2)$.
		Point \textit{3.} implies
		\[\int_{\X_1\times\X_2\times\X_1} \cost(x_1,x_2) - \cost(x_1',x_2) d\theta(x_1,x_2,x_1') 
		\leq \hat{\C}(\Z_1,\Z_2) - \C(\operatorname{pr}^3_\sharp\theta, \iota_2(\Z_2)).\]
		Moreover, by Lemma \ref{lemma:key} and definition of superdifferential, we have
		\[\hat{\C}(\Z_1,\Z_2) - \hat{\phi}_1(\Z_1) \leq \inf_{\Z_1'\in \iota_1^{-1}(\operatorname{pr}^3_\sharp\theta)} \hat{\C}(\Z_1',\Z_2) - \hat{\phi}_1(\Z_1') = \C(\operatorname{pr}^3_\sharp\theta, \iota_2(\Z_2)) - \phi_1(\operatorname{pr}^3_\sharp\theta). \]
		Putting everything together, we obtain that $(\Z_1,\Z_2)_\sharp \bbM \in \partial_{\ttt,\C}^+\phi_1$.
		
		Vice versa, let $(\Z_1,\Z_2) \in \XX_{1,\tta_1}\times \XX_{2,\tta_2}$ be such that their joint law belongs to the total $\C$-superdifferential of $\phi_1$. Then, for all $\Z_1'\in \XX_{1,\tta_1}$, considering $\theta:= (\Z_1,\Z_2,\Z_1')_\sharp\bbM$, the definition of total $\C$-superdifferential immediately yields the conclusion.
	\end{proof}

	Let $\ttM_i\in \PP(\PP(\X_i))$ for $i=1,2$. Clearly, the maps $(\iota_i)_\sharp: \PP(\XX_i) \to \PP(\PP(\X_i))$ are surjective, and it is natural to relate the cost $\calC(\ttM_1,\ttM_2)$ with
	\begin{equation}
		\hat{\calC}(\ttm_1,\ttm_2) := \min \left\{ \int_{\XX_1\times \XX_2} \hat{\C}(\Z_1,\Z_2) d\mathfrak{P}(\Z_1,\Z_2) \ : \ \mathfrak{P}\in \Gamma(\ttm_1.\ttm_2) \right\},
	\end{equation}
	for $\ttm_i \in \PP(\XX_i)$ such that $(\iota_i)_\sharp \ttm_i = \ttM_i$.
	
	\begin{theorem}\label{thm: OT lift}
		Let $\ttM_i \in \PP(\PP(\X_i))$ be satisfying Assumption \ref{ass: integr}.
		\begin{enumerate}
			\item  If $\ttm_i \in \PP(\XX_i)$ is such that $(\iota_i)_\sharp \ttm_i = \ttM_i$, then $\ttm_i$ is concentrated on $\XX_{i,\tta_i}$ and 
			\begin{equation}\label{eq:130}
				\calC(\ttM_1,\ttM_2) \leq \hat{\calC}(\ttm_1,\ttm_2);
			\end{equation}
			\item There exist $\overline{\ttm}_i \in \PP(\XX_i)$ such that $(\iota_i)_\sharp \overline{\ttm}_i = \ttM_i$ and 
			\begin{equation}\label{eq:131}
				\calC(\ttM_1,\ttM_2) = \hat{\calC}(\overline{\ttm}_1,\overline{\ttm}_2);
			\end{equation}
			\item Let $\overline{\ttm}_i \in \PP(\XX_i)$ be satisfying $(\iota_i)_\sharp \overline{\ttm}_i = \ttM_i$ and \eqref{eq:131}. Then, $\mathfrak{P} \in \Gamma_{o,\hat{\C}}(\overline{\ttm}_1,\overline{\ttm}_2)$ implies $(\iota_{1,2})_\sharp \mathfrak{P}\in \RGamma_{o,\C}(\ttM_1,\ttM_2)$. Moreover, $(\phi_1,\phi_1^{\C})$ are optimal Kantorovich potentials for $\calC(\ttM_1,\ttM_2)$ if and only if $(\hat{\phi}_1,\hat{\phi}_1^{\hat{\C}})$ are optimal Kantorovich potentials for $\hat{\calC}(\overline{\ttm}_1,\overline{\ttm}_2)$.
		\end{enumerate}
	\end{theorem}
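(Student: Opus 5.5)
For point \textit{1}, I will first note that $\ttm_i(\XX_{i,\tta_i}) = \ttm_i(\iota_i^{-1}(\PP_{\tta_i}(\X_i))) = \ttM_i(\PP_{\tta_i}(\X_i)) = 1$, using that $\Z_i\in\XX_{i,\tta_i}$ if and only if $\iota_i(\Z_i)\in\PP_{\tta_i}(\X_i)$. For \eqref{eq:130}, take any $\mathfrak{P}\in\Gamma(\ttm_1,\ttm_2)$ and set $\Pi:=(\iota_1\circ\operatorname{pr}^1,\iota_2\circ\operatorname{pr}^2)_\sharp\mathfrak{P}\in\Gamma(\ttM_1,\ttM_2)$. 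Lemma \ref{lemma:key}, Claim 1, gives $\C(\iota_1(\Z_1),\iota_2(\Z_2))\le\hat\C(\Z_1,\Z_2)$; integrating against $\mathfrak{P}$ yields $\calC(\ttM_1,\ttM_2)\le\int\C\,d\Pi\le\int\hat\C\,d\mathfrak{P}$, and then I take the infimum. Equivalently, $(\iota_{1,2})_\sharp\mathfrak{P}\in\RGamma(\ttM_1,\ttM_2)$ and \eqref{eq:72} applies directly.

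For point \textit{2}, I will fix optimal potentials $(\phi_1,\phi_1^{\C})$ for $\calC(\ttM_1,\ttM_2)$, which exist by the duality recalled in the introduction under Assumption \ref{ass: integr}. The aim is to choose lifts so that the lifted dual value is at least $\calC$. I would take \emph{any} $\overline\ttm_i$ with $(\iota_i)_\sharp\overline\ttm_i=\ttM_i$, for instance $\overline\ttm_i=(G_i)_\sharp\ttM_i$ for a universally measurable right inverse $G_i$ of $\iota_i$, obtained by the same selection theorem used in Lemma \ref{lemma: rand coupl}. By Proposition \ref{prop: lift cont}, Claim 1, the pair $(\hat\phi_1,\hat\phi_1^{\hat\C})=(\hat\phi_1,\widehat{\phi_1^{\C}})$ is admissible for the dual of $\hat\calC(\overline\ttm_1,\overline\ttm_2)$. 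Its value is
\[
\int\hat\phi_1\,d\overline\ttm_1+\int\widehat{\phi_1^{\C}}\,d\overline\ttm_2=\int\phi_1\,d\ttM_1+\int\phi_1^{\C}\,d\ttM_2=\calC(\ttM_1,\ttM_2).
\]
Weak duality for $\hat\calC$ then gives $\hat\calC\ge\calC$, which is already known from point \textit{1}. What I actually need is the reverse inequality, $\hat\calC\le\calC$, and I will prove it by strong duality for $\hat\calC$. Here $\hat\C$ is lower semicontinuous by Lemma \ref{lemma:key}, Claim 2, and satisfies $\hat\C\le\hat\ttA_1\oplus\hat\ttA_2$ with $\hat\ttA_i(\Z_i)=\ttA_i(\iota_i(\Z_i))\in L^1(\overline\ttm_i)$. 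Hence Kantorovich duality holds for $\hat\calC$, and it remains to bound every admissible dual pair $(\psi_1,\psi_2)$ for $\hat\calC$ by $\calC$.

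This is the main obstacle, because $\psi_i$ need not be law-invariant. My first attempt is to choose lifts that make law-invariance automatic in the relevant sense. Using that $\cost$ is continuous, Lemma \ref{lemma:key}, Claim 4, shows that for fixed $\Z_1$ the value $\inf_{\Z_2\in\iota_2^{-1}(\mu_2)}\hat\C(\Z_1,\Z_2)$ equals $\C(\iota_1(\Z_1),\mu_2)$. So I would construct the coupling directly. Take an optimal $\Pi\in\Gamma_{o,\C}(\ttM_1,\ttM_2)$, set $\overline\ttm_1=(G_1)_\sharp\ttM_1$, and, for $\varepsilon>0$, measurably select $\Z_2^\varepsilon(\Z_1,\mu_2)\in\iota_2^{-1}(\mu_2)$ with $\hat\C(\Z_1,\Z_2^\varepsilon)\le\C(\iota_1(\Z_1),\mu_2)+\varepsilon$, using measurable selection on the Polish space $\XX_1\times\PP(\X_2)\times\XX_2$. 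The issue is that $\overline\ttm_2$ would then depend on $\varepsilon$. To avoid this, I would instead fix $\overline\ttm_2$ as a countable mixture $\sum_k2^{-k}(\text{lift}_k)$, with $\overline\ttm_1$ built correspondingly. A cleaner route, which I prefer, is to allow $\overline\ttm_i$ to be the disintegrated ``mixture'' lift and pass to the limit via tightness of $\{\mathfrak{P}_\varepsilon\}$ in $\PP(\XX_1\times\XX_2)$, together with lower semicontinuity of $\hat\C$. If the second marginals are tight, I obtain the limit lift $\overline\ttm_2$ and a plan of cost at most $\calC$. I expect this tightness, or the choice of a single $\overline\ttm_2$ independent of $\varepsilon$, to be the delicate step.

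For point \textit{3}, let $\mathfrak{P}$ be optimal and put $\ttP=(\iota_{1,2})_\sharp\mathfrak{P}\in\RGamma(\ttM_1,\ttM_2)$. Its cost equals $\int\hat\C\,d\mathfrak{P}=\hat\calC=\calC$, so $\ttP$ is optimal by \eqref{eq:72}. For the potentials, the computation in point \textit{2}, combined with Proposition \ref{prop: lift cont}, Claim 1, shows that the dual values coincide:
\[
\int\hat\phi_1\,d\overline\ttm_1+\int\hat\phi_1^{\hat\C}\,d\overline\ttm_2=\int\phi_1\,d\ttM_1+\int\phi_1^{\C}\,d\ttM_2.
\]
Since the two primal values are equal by \eqref{eq:131}, one pair attains the optimal value if and only if the other does. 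Proposition \ref{prop: lift cont}, Claim 2, transfers $\C$-concavity in both directions.
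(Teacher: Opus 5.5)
Your points 1 and 3 are fine and match the paper. The gap is point 2. You leave it open, and the routes you sketch cannot close it. The statement only claims that \emph{some} lifts work, and it is false for arbitrary lifts, in particular for a first lift $\overline{\ttm}_1=(G_1)_\sharp\ttM_1$ fixed in advance.

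Here is a counterexample. Take $\X_1=\X_2=\R^2$, $\cost(x,y)=|x-y|^2$ (continuous), $\mu_1=\cH^1|_{\{0\}\times[0,1]}$, $\mu_2=\tfrac12\cH^1|_{\{-1\}\times[0,1]}+\tfrac12\cH^1|_{\{1\}\times[0,1]}$, $\ttM_i=\delta_{\mu_i}$, and $\rmQ=[0,1]$ with Lebesgue measure. Then $\calC(\ttM_1,\ttM_2)=\C(\mu_1,\mu_2)=1$. Suppose $G_1$ selects $\Z_1(q)=(0,q)$. Every $\Z_2\in\iota_2^{-1}(\mu_2)$ then has the form $T\circ\Z_1$, and the argument of the counterexample after Lemma \ref{lemma:key} gives $T_2(0,q)\neq q$ for a.e.\ $q$. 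Hence $\hat{\C}(\Z_1,\Z_2)=1+\int_0^1|q-T_2(0,q)|^2\,dq>1$, so $\hat{\calC}(\delta_{\Z_1},\overline{\ttm}_2)>\calC(\ttM_1,\ttM_2)$ for \emph{every} lift $\overline{\ttm}_2$ of $\ttM_2$. Three consequences follow in this example:
\begin{itemize}
\item The infimum in Lemma \ref{lemma:key}, Claim 4, is not attained.
\item The second marginals of your $\varepsilon$-optimal plans cannot be tight. A limit point would be a lift of $\ttM_2$ with cost at most $1$, by lower semicontinuity of $\hat{\C}$.
\item By strong duality for $\hat{\calC}$, there are admissible dual pairs with value strictly above $\calC$. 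So bounding all of them by $\calC$ is impossible.
\end{itemize}

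The missing idea is to lift the two marginals \emph{jointly} through $\iota_{1,2}$, not separately through $\iota_1$ and $\iota_2$. Take $\ttP\in\RGamma_{o,\C}(\ttM_1,\ttM_2)$, which exists by Lemma \ref{lemma: rand coupl}. Since $\iota_{1,2}\colon\XX_1\times\XX_2\to\PP(\X_1\times\X_2)$ is continuous and surjective, it admits a universally measurable right inverse $H$. Set $\mathfrak{P}:=H_\sharp\ttP$ and $\overline{\ttm}_i:=\operatorname{pr}^i_\sharp\mathfrak{P}$. From $\iota_i\circ\operatorname{pr}^i=\operatorname{pr}^i_\sharp\circ\iota_{1,2}$ you get $(\iota_i)_\sharp\overline{\ttm}_i=\operatorname{pr}^i_{\sharp\sharp}\ttP=\ttM_i$. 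From $\hat{\C}(\Z_1,\Z_2)=\int\cost\,d\big(\iota_{1,2}(\Z_1,\Z_2)\big)$ you get $\int\hat{\C}\,d\mathfrak{P}=\int\!\int\cost\,d\pi\,d\ttP(\pi)=\calC(\ttM_1,\ttM_2)$. Together with point 1, this gives \eqref{eq:131}. No duality, no Claim 4 and no limiting procedure is needed, and continuity of $\cost$ plays no role. The selected pairs $(\Z_1,\Z_2)$ realize optimal couplings exactly. A first lift chosen beforehand, possibly a $\Z_1$ with no independent randomization, cannot guarantee this.
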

	
	\begin{proof}
		The first part easily follows directly from the definitions, while the second part is a consequence of the surjectivity of $(\iota_{1,2})_\sharp:\XX_1\times \XX_2 \to \PP(\X_1\times \X_2)$ (and thus the existence of a universally measurable right-inverse) and the existence of an optimal random coupling $\ttP \in \RGamma_{o,\C}(\ttM_1,\ttM_2)$. Regarding Claim \textit{3.}, the first part is trivial, while the second part follows from the previous Proposition: if $(\hat{\phi}_1,\hat{\phi}_1^{\hat{\C}})$ are optimal, then
		\begin{align*}
			\calC(\ttM_1,\ttM_2) = \hat{\calC}(\overline{\ttm}_1,\overline{\ttm}_2) = \int \hat{\phi}_1 d\overline{\ttm}_1 + \int \hat{\phi}_1^{\hat{\C}}d\overline{\ttm}_2 = \int \phi_1 d\ttM_1 + \int \phi_1^{\C} d\ttM_2 \leq \calC(\ttM_1,\ttM_2), 
		\end{align*}
		and similarly we have also the opposite implication.
	\end{proof}

	\subsection{Lower semicontinuous cost: \texorpdfstring{$\C$}{}-concavity and \texorpdfstring{$\hat{\C}^{\operatorname{IR}}$}{}-concavity}\label{subsec: lions IR}
	Here, we reproduce a similar scheme that allows us to deal with \textit{lower-semicontinuous} functions $\cost\colon\X_1,\times \X_2 \to [0,+\infty]$ that satisfies \eqref{eq:100}. The difference with the previous subsection is that the lack of the property \eqref{eq:93} makes necessary the use of the lifted spaces $\XX_i^{\operatorname{IR}}$. In particular, we define 
	\begin{equation}
		\begin{gathered}
			\XX_{i,\tta_i}^{\operatorname{IR}} := \left\{ (\Z_i^{\operatorname{IR}},U_i) \in \XX_i^{\operatorname{IR}} \ : \ \int_{\rmQ}\tta_i(\Z_i^{\operatorname{IR}}) d\bbM <+\infty \right\}, \\  \hat{\phi}_i^{\operatorname{IR}}(\Z_i^{\operatorname{IR}},U_i):= \phi_i(\iota_i^{\operatorname{IR}}(\Z_i^{\operatorname{IR}},U_i)) = \phi_i\big((\Z_i^{\operatorname{IR}})_\sharp \bbM \big),\smallskip
			\\ \hat{\rmF}^{\operatorname{IR}}:= \left\{ (\Z_1^{\operatorname{IR}},U_1,\Z_2^{\operatorname{IR}},U_2) \ : \ \iota_{1,2}^{\operatorname{IR}}(\Z_1^{\operatorname{IR}},U_1,\Z_2^{\operatorname{IR}},U_2) \in \rmF \right\} \subseteq \XX_1^{\operatorname{IR}}\times \XX_2^{\operatorname{IR}},
		\end{gathered}
	\end{equation}
	for given $\phi_i\colon\PP_{\tta_i}(\X_i) \to [-\infty,+\infty)$ and $\rmF \in \PP_{\tta_1\oplus \tta_2}(\X_1\times \X_2)$, which imply
	\begin{itemize}
		\item $\hat{\phi}_1^{\operatorname{IR}}\colon\XX_{1,\tta_1}^{\operatorname{IR}}\to [-\infty,+\infty)$ and $(\hat{\phi}_1^{\operatorname{IR}})^{\hat{\C}^{\operatorname{IR}}}: \XX_{2,\tta_2}^{\operatorname{IR}}\to [-\infty,+\infty)$;
		\item $\hat{\rmF}^{\operatorname{IR}}\subset  \XX_{1,\tta_1}^{\operatorname{IR}}\times \XX_{2,\tta_2}^{\operatorname{IR}}$.
	\end{itemize}
	
	\noindent Moreover, define
	\begin{equation}
		\hat{\calC}^{\operatorname{IR}}(\ttm_1^{\operatorname{IR}},\ttm_2^{\operatorname{IR}}) := \min \left\{ \int_{\XX_1^{\operatorname{IR}}\times \XX_2^{\operatorname{IR}}} \hat{\C}^{\operatorname{IR}}(\Z_1^{\operatorname{IR}},U_1,\Z_2^{\operatorname{IR}},U_2) d\mathfrak{P}^{\operatorname{IR}}(\Z_1^{\operatorname{IR}},\Z_2^{\operatorname{IR}}) \ : \ \mathfrak{P}^{\operatorname{IR}}\in \Gamma(\ttm_1^{\operatorname{IR}},\ttm_2^{\operatorname{IR}}) \right\},
	\end{equation}
	for $\ttm_i^{\operatorname{IR}} \in \PP(\XX_i^{\operatorname{IR}})$.
	
	Then, we can replicate the previous results, whose statements are exposed below. The proofs follows the exact same line of Proposition \ref{prop: lift cont} and Theorem \ref{thm: OT lift}, but exploiting Lemma \ref{lemma:key}, Claim \textit{3.}, instead of Claim \textit{4.}.
	
	\begin{proposition}
		Let $\phi_1\colon\PP_{\tta_1}(\X_1) \to [-\infty, +\infty)$ be not identically $-\infty$. Then 
		\begin{enumerate}
			\item $(\widehat{\phi_1^{\C}})^{\operatorname{IR}} = (\hat{\phi}_1^{\operatorname{IR}})^{\hat{\C}^{\operatorname{IR}}}$;
			\item $\phi_1$ is $\C$-concave if and only if $\hat{\phi}_1^{\operatorname{IR}}$ is $\hat{\C}^{\operatorname{IR}}$-concave;
			\item $(\Z_1^{\operatorname{IR}},U_1,\Z_2^{\operatorname{IR}},U_2)\in \partial_{\hat{\C}^{\operatorname{IR}}}^+ \hat{\phi}_1^{\operatorname{IR}}$ if and only if $\big(\iota_1^{\operatorname{IR}}(\Z_1,U_1),\iota_2^{\operatorname{IR}} (\Z_2^{\operatorname{IR}},U_2)\big) \in \partial_{\C}^+\phi_1$ and $\hat{\C}^{\operatorname{IR}}(\Z_1,U_1,\Z_2^{\operatorname{IR}},U_2) = \C\big(\iota_1^{\operatorname{IR}}(\Z_1^{\operatorname{IR}},U_1), \iota_2^{\operatorname{IR}}(\Z_2^{\operatorname{IR}},U_2)\big)$;
			\item $\rmF \subset \PP_{\tta_1\oplus\tta_2}(\X_1\times \X_2)$ is total $\C$-cyclically monotone if and only if $\hat{\rmF}^{\operatorname{IR}}$ is $\hat{\C}^{\operatorname{IR}}$-cyclically monotone;
			\item $(\widehat{\partial_{\ttt,\C}^+\phi_1})^{\operatorname{IR}} = \partial_{\hat{\C}^{\operatorname{IR}}}^+ \hat{\phi}_1^{\operatorname{IR}}$, i.e. $(\Z_1^{\operatorname{IR}},U_1,\Z_2^{\operatorname{IR}},U_2) \in \partial_{\hat{\C}^{\operatorname{IR}}}^+ \hat{\phi}_1^{\operatorname{IR}}$ if and only if $(\Z_1^{\operatorname{IR}},\Z_2^{\operatorname{IR}})_\sharp \bbM \in \partial_{\ttt,\C}^+\phi_1$.
		\end{enumerate}
	\end{proposition}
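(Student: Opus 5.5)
The plan is to replay the proof of Proposition \ref{prop: lift cont} verbatim on the spaces $\XX_{i,\tta_i}^{\operatorname{IR}}$. The single ingredient that changes is the identity replacing \eqref{eq:93}. Lemma \ref{lemma:key}, Claim \textit{1.} and Claim \textit{3.}, together with the surjectivity of $\iota_1^{\operatorname{IR}}$, give for every $(\Z_2^{\operatorname{IR}},U_2)\in(\iota_2^{\operatorname{IR}})^{-1}(\mu_2)$ and every $\mu_1$
\[
\C(\mu_1,\mu_2)=\min_{(\Z_1^{\operatorname{IR}},U_1)\in(\iota_1^{\operatorname{IR}})^{-1}(\mu_1)}\hat{\C}^{\operatorname{IR}}(\Z_1^{\operatorname{IR}},U_1,\Z_2^{\operatorname{IR}},U_2),
\]
and the symmetric statement with the roles of $1$ and $2$ exchanged. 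The inequality $\le$ is Claim \textit{1.} The attainment is Claim \textit{3.} applied with the indices swapped: its proof is symmetric, since one only needs an independent uniform variable $U_2$ to randomize the disintegration of an optimal plan with respect to the second marginal. Here the independent randomization is exactly what restores the equality that fails for merely lower semicontinuous $\cost$ (cf. the counterexample after Lemma \ref{lemma:key}), and it holds without any atomless assumption. Finiteness issues are handled by restricting to $\XX_{i,\tta_i}^{\operatorname{IR}}$, which are exactly the preimages of $\PP_{\tta_i}(\X_i)$.

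With this identity in hand, the five claims follow one by one.

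Claim \textit{1.}: I would write $(\widehat{\phi_1^{\C}})^{\operatorname{IR}}(\Z_2^{\operatorname{IR}},U_2)$ as an infimum over $\mu_1$ and then over the fibre $(\iota_1^{\operatorname{IR}})^{-1}(\mu_1)$, exactly as in the continuous case. The two infima collapse to the infimum over $\XX_{1,\tta_1}^{\operatorname{IR}}$ of $\hat{\C}^{\operatorname{IR}}-\hat{\phi}_1^{\operatorname{IR}}$.

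Claim \textit{2.}: this follows by applying Claim \textit{1.} twice, once to $\phi_1$ and once to $\phi_1^{\C}$. The second application uses the symmetric version of the identity, and then $\C$-concavity is read as $\phi_1^{\C\C}=\phi_1$.

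Claim \textit{3.}: the direction ``$\Leftarrow$'' is the chain of inequalities using $\C\le\hat{\C}^{\operatorname{IR}}$. For ``$\Rightarrow$'', I would take the infimum of the superdifferential inequality over each fibre and use the identity. Testing with $\mu_1'=\iota_1^{\operatorname{IR}}(\Z_1^{\operatorname{IR}},U_1)$ then forces $\hat{\C}^{\operatorname{IR}}=\C$.

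Claim \textit{4.}: $\hat{\C}^{\operatorname{IR}}$-cyclical monotonicity of $\hat{\rmF}^{\operatorname{IR}}$ reduces to integrals against $\theta=((\Z_{1,i}^{\operatorname{IR}},\Z_{2,i}^{\operatorname{IR}})_{i})_\sharp\bbM$, since the $U_i$ do not enter the cost. For the converse I need that every $\theta\in\PP((\X_1\times\X_2)^N)$ is the joint law of some $(\Z_{1,i}^{\operatorname{IR}},\Z_{2,i}^{\operatorname{IR}})_i$, each of which admits independent uniform randomizations $U_{1,i}$, $U_{2,i}$. To get this, I would realize $\theta\otimes\bbM$ on $\rmQ$ via the isomorphism $\rmQ\simeq\rmQ\times\rmQ$. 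From the second factor I would extract $2N$ uniform variables that are independent of everything else and of each other, again by the isomorphism $\rmQ\simeq\rmQ^{2N}$. This representation step is the only genuinely new point, and it is where I expect the main care to be needed. The reason is that $\XX^{\operatorname{IR}}$ requires independence only between $\Z_i^{\operatorname{IR}}$ and $U_i$, and the construction above gives even more than that.

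Claim \textit{5.}: this follows as in point \textit{5.} of Proposition \ref{prop: lift cont}. The direction ``$\Rightarrow$'' uses Claim \textit{3.} together with the identity. For ``$\Leftarrow$'', given $(\Z_1'^{\operatorname{IR}},U_1')$, I take $\theta:=(\Z_1^{\operatorname{IR}},\Z_2^{\operatorname{IR}},\Z_1'^{\operatorname{IR}})_\sharp\bbM$ and apply the definition of the total $\C$-superdifferential directly.
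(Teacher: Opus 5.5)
Your proposal is correct and follows the paper's route, which simply replays the proof of Proposition~\ref{prop: lift cont} with Lemma~\ref{lemma:key}, Claim~3, in place of Claim~4. You are also more explicit than the paper on two points it leaves unstated. First, Claim~3 is needed with the roles of the two marginals exchanged, using $U_2$ to randomize the disintegration with respect to the second marginal. Second, in Claim~4 the law $\theta$ must be realized together with uniform randomizations independent of the $\Z_{j,i}^{\operatorname{IR}}$, for instance by realizing $\theta\otimes\bbM^{\otimes 2N}$ on $\rmQ$.
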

	
	\begin{theorem}\label{thm: OT lift IR}
		Let $\ttM_i \in \PP(\PP(\X_i))$ be satisfying Assumption \ref{ass: integr}.
		\begin{enumerate}
			\item  If $\ttm_i^{\operatorname{IR}} \in \PP(\XX_i^{\operatorname{IR}})$ is such that $(\iota_i^{\operatorname{IR}})_\sharp \ttm_i^{\operatorname{IR}} = \ttM_i$, then $\ttm_i^{\operatorname{IR}}$ is concentrated on $\XX_{i,\tta_i}^{\operatorname{IR}}$ and 
			\begin{equation}\label{eq:132}
				\calC(\ttM_1,\ttM_2) \leq \hat{\calC}^{\operatorname{IR}}(\ttm_1^{\operatorname{IR}},\ttm_2^{\operatorname{IR}});
			\end{equation}
			\item There exist $\overline{\ttm}_i^{\operatorname{IR}} \in \PP(\XX_i^{\operatorname{IR}})$ such that $(\iota_i^{\operatorname{IR}})_\sharp \overline{\ttm}_i^{\operatorname{IR}} = \ttM_i$ and 
			\begin{equation}\label{eq:133}
				\calC(\ttM_1,\ttM_2) = \hat{\calC}^{\operatorname{IR}}(\overline{\ttm}_1^{\operatorname{IR}}, \overline{\ttm}_2^{\operatorname{IR}});
			\end{equation}
			\item Assume $\overline{\ttm}_i^{\operatorname{IR}} \in \PP(\XX_i)$ such that $(\iota_i^{\operatorname{IR}})_\sharp \overline{\ttm}_i^{\operatorname{IR}} = \ttM_i$ and \eqref{eq:133} holds. Then, $\mathfrak{P}^{\operatorname{IR}} \in \Gamma_{o,\hat{\C}^{\operatorname{IR}}}(\overline{\ttm}_1^{\operatorname{IR}},\overline{\ttm}_2^{\operatorname{IR}})$ implies $(\iota_{1,2}^{\operatorname{IR}})_\sharp \mathfrak{P}^{\operatorname{IR}}\in \RGamma_{o,\C}(\ttM_1,\ttM_2)$. Moreover, $(\phi_1,\phi_1^{\C})$ are optimal Kantorovich potentials for $\calC(\ttM_1,\ttM_2)$ if and only if $(\hat{\phi}_1^{\operatorname{IR}},(\hat{\phi}_1^{\operatorname{IR}})^{\hat{\C}^{\operatorname{IR}}})$ are optimal Kantorovich potentials for $\hat{\calC}^{\operatorname{IR}}(\overline{\ttm}_1^{\operatorname{IR}},\overline{\ttm}_2^{\operatorname{IR}})$.
		\end{enumerate}
	\end{theorem}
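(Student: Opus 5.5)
The proof follows the pattern of Theorem \ref{thm: OT lift}. The only change is that the pointwise identity \eqref{eq:93} is replaced by its randomized counterpart, Lemma \ref{lemma:key}, Claim \textit{3.}. We also use the IR-version of Proposition \ref{prop: lift cont} stated just above.

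\textit{Claim 1.} Since $\iota_i^{\operatorname{IR}}(\Z_i^{\operatorname{IR}},U_i)\in\PP_{\tta_i}(\X_i)$ if and only if $\int\tta_i(\Z_i^{\operatorname{IR}})\,d\bbM<+\infty$, and $\ttM_i$ is concentrated on $\PP_{\tta_i}(\X_i)$ by Assumption \ref{ass: integr}, the measure $\ttm_i^{\operatorname{IR}}$ is concentrated on $\XX_{i,\tta_i}^{\operatorname{IR}}$. Now take any $\mathfrak{P}^{\operatorname{IR}}\in\Gamma(\ttm_1^{\operatorname{IR}},\ttm_2^{\operatorname{IR}})$ and set $\ttP:=(\iota_{1,2}^{\operatorname{IR}})_\sharp\mathfrak{P}^{\operatorname{IR}}$. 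Because $\operatorname{pr}^i_\sharp\circ\iota_{1,2}^{\operatorname{IR}}=\iota_i^{\operatorname{IR}}\circ\operatorname{pr}^i$, we get $\ttP\in\RGamma(\ttM_1,\ttM_2)$. Moreover $\hat{\C}^{\operatorname{IR}}=\int\cost\,d\iota_{1,2}^{\operatorname{IR}}$ holds identically, so \eqref{eq:72} gives
\[\calC(\ttM_1,\ttM_2)\le\int\hat{\C}^{\operatorname{IR}}\,d\mathfrak{P}^{\operatorname{IR}}.\]
Minimizing over $\mathfrak{P}^{\operatorname{IR}}$ yields \eqref{eq:132}. Measurability of $\iota_{1,2}^{\operatorname{IR}}$ is automatic, since it is continuous.

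\textit{Claim 2.} This is the main step. Take $\ttP\in\RGamma_{o,\C}(\ttM_1,\ttM_2)$, which exists by Lemma \ref{lemma: rand coupl}. The continuous map $\iota_{1,2}^{\operatorname{IR}}\colon\XX_1^{\operatorname{IR}}\times\XX_2^{\operatorname{IR}}\to\PP(\X_1\times\X_2)$ is surjective: given $\pi$, apply the construction in Lemma \ref{lemma:key}, Claim \textit{3.}, starting from any $(\Z_1^{\operatorname{IR}},U_1)$ with law $\operatorname{pr}^1_\sharp\pi\otimes\bbM$. Such a pair exists because $\rmQ$ is atomless standard Borel, so $\X_1\times\rmQ$ can be realised on it. By \cite[Theorem 6.9.1]{Bogachev07} there is a universally measurable right inverse $G$. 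Set $\mathfrak{P}^{\operatorname{IR}}:=G_\sharp\ttP$ and $\overline{\ttm}_i^{\operatorname{IR}}:=\operatorname{pr}^i_\sharp\mathfrak{P}^{\operatorname{IR}}$. Then $(\iota_i^{\operatorname{IR}})_\sharp\overline{\ttm}_i^{\operatorname{IR}}=\operatorname{pr}^i_{\sharp\sharp}\ttP=\ttM_i$, and
\[\int\hat{\C}^{\operatorname{IR}}\,d\mathfrak{P}^{\operatorname{IR}}=\int\!\!\int\cost\,d\pi\,d\ttP=\calC(\ttM_1,\ttM_2).\]
Together with Claim \textit{1.}, this gives \eqref{eq:133}. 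The delicate point is guaranteeing that $G$ takes values in pairs that genuinely satisfy the independence constraint. This holds because the preimage under $\iota^{\operatorname{IR}}_{1,2}$ is taken inside the closed set $\XX_1^{\operatorname{IR}}\times\XX_2^{\operatorname{IR}}$, which is Polish by the closedness lemma above, so the measurable selection theorem applies there.

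\textit{Claim 3.} Let $\mathfrak{P}^{\operatorname{IR}}$ be optimal. By Claim \textit{1.}, its image $(\iota_{1,2}^{\operatorname{IR}})_\sharp\mathfrak{P}^{\operatorname{IR}}$ is a random coupling, and its cost equals $\hat{\calC}^{\operatorname{IR}}(\overline{\ttm}_1^{\operatorname{IR}},\overline{\ttm}_2^{\operatorname{IR}})=\calC(\ttM_1,\ttM_2)$, so it is optimal. For the potentials, point 1 of the IR-proposition gives $(\hat{\phi}_1^{\operatorname{IR}})^{\hat{\C}^{\operatorname{IR}}}=(\widehat{\phi_1^{\C}})^{\operatorname{IR}}$. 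Hence
\[\int\hat{\phi}_1^{\operatorname{IR}}\,d\overline{\ttm}_1^{\operatorname{IR}}+\int(\hat{\phi}_1^{\operatorname{IR}})^{\hat{\C}^{\operatorname{IR}}}\,d\overline{\ttm}_2^{\operatorname{IR}}=\int\phi_1\,d\ttM_1+\int\phi_1^{\C}\,d\ttM_2.\]
Point 2 of the same proposition transfers $\C$-concavity in both directions. So if either side attains the common value $\calC=\hat{\calC}^{\operatorname{IR}}$, so does the other, and weak duality on both sides closes the chain of inequalities exactly as in the proof of Theorem \ref{thm: OT lift}.
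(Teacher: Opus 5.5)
Your proposal is correct and follows the paper's route. The paper proves this theorem by rerunning the proof of Theorem \ref{thm: OT lift}: push-forward under $\iota_{1,2}^{\operatorname{IR}}$ gives the inequality, a universally measurable right inverse of the surjective map $\iota_{1,2}^{\operatorname{IR}}$ applied to an optimal random coupling gives the equality, and the IR-version of Proposition \ref{prop: lift cont} handles the potentials, using Lemma \ref{lemma:key}, Claim 3, in place of Claim 4. You also spell out two points the paper leaves implicit: the surjectivity of $\iota_{1,2}^{\operatorname{IR}}$ via the construction in Lemma \ref{lemma:key}, Claim 3, and the Polishness of $\XX_1^{\operatorname{IR}}\times\XX_2^{\operatorname{IR}}$ via the closedness lemma.
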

	
	\begin{remark}
		The results presented in \ref{subsec: lions cont} and \ref{subsec: lions IR} may be used, following \cite{PS25convex}, to have alternative proofs for Propositions \ref{prop: char tot superdiff} and \ref{prop: char tot mon}, and consequently for Theorem \ref{thm: char opt RGamma} as well.
	\end{remark}
	
	In the Wasserstein example, i.e. $\X_1 = \X_2 = \X$ with $\cost(x_1,x_2) = \rmd^p(x_1,x_2)$ and $p\in[1,+\infty)$, the previous lifting can be rewritten as
	\[\iota\colon \XX_p \to \PP_{p}(\X), \qquad \XX_p:= L^p(\rmQ,\mathcal{F}_{\rmQ},\bbM; \X).\]
	In particular, if $\X$ is a separable Banach space, then $\XX_p$ itself is a Banach space. This structure will play a fundamental role in Section \ref{sec: strict}.
	
	\section{The strict Monge problem between laws of random measures}\label{sec: strict}
	The goal of this section is to introduce and study the strict Monge formulation of optimal transport between (laws of) random measures.
	
	\begin{problem}
		Using the setting of Section \ref{sec: OT gen cost}, let $\ttM_i \in \PP(\PP(\X_i))$, for $i=1,2$ and $\cost\colon\X_1\times \X_2 \to [0,+\infty]$ a lower semicontinuous function. The strict Monge formulation of the optimal transport between $\ttM_1$ and $\ttM_2$ is
		\begin{equation}\label{eq: strict cost}
			\begin{aligned}
				\calC_{sM}(\ttM_1,\ttM_2):=\inf\Bigg\{\int_{\PP(\X_1)} \int_{\X_1} \cost(x_1,\rmT(x_1,\mu_1)) d\mu_1(x_1) d\ttM_1(\mu_1) \, : \quad
				\\
				\rmT\colon\X_1 \times \PP(\X_1) \to \X_2 \text{ Borel,} \ \cT_\sharp \ttM_1 = \ttM_2\Bigg\},
			\end{aligned}
		\end{equation}
		where we denote $\cT(\mu_1) := \rmT(\cdot,\mu_1)_\sharp \mu_1$.
	\end{problem} 
	
	In this scenario, the induced map $\cT\colon\PP(\X_1) \to \PP(\X_2)$ is Borel measurable, applying \cite[Corollary D.6]{Pinzi-Savare25}, and it satisfies
	\[\int_{\PP(\X_1)} \C(\mu_1, \cT(\mu_1)) d\ttM_1(\mu_1) \leq \int_{\PP(\X_1)} \int_{\X_1} \cost(x_1,\rmT(x_1,\mu_1)) d\mu_1(x_1) d\ttM_1(\mu_1),\]
	so that $\cT$ is a competitor for the classic Monge problem
	\begin{equation}\label{eq: monge cost}
		\calC_{M}(\ttM_1,\ttM_2):=\inf\left\{\int_{\PP(\X_1)} \C(\mu_1, \cT(\mu_1)) d\ttM_1(\mu_1)  \,: \, \cT\colon\PP(\X_1) \to \PP(\X_2) \text{ Borel s.t. }\cT_\sharp \ttM_1 = \ttM_2\right\}.
	\end{equation}
	In particular, we have the following inequalities
	\begin{equation}\label{eq: ineq monge}
		\calC(\ttM_1,\ttM_2) \leq \calC_M(\ttM_1,\ttM_2) \leq \calC_{sM}(\ttM_1,\ttM_2).
	\end{equation}

	\noindent The goal of the following subsections will be:
	\begin{enumerate}
		\item to give sufficient conditions under which $\calC(\ttM_1,\ttM_2) = \calC_{sM}(\ttM_1,\ttM_2)$, in the spirit of the result by A. Pratelli \cite{pratelli2007equality};
		\item to show existence and uniqueness of solutions to the strict Monge problem in the case in which $\X_1=\X_2 = \rmB$, where $\big(\rmB,\|\cdot\|\big)$ is a strictly convex and separable Banach space, with $\cost(x_1,x_2) = \|x_1- x_2\|^p$, for $p>1$, as already studied in the Hilbertian setting with $p=2$ in \cite{PS25convex}. We stress that this result is new in the Hilbert case for $p\neq 2$, and for non-Hilbertian norms in the case of $p=2$ (even in finite dimension).
	\end{enumerate}

	\subsection{On the equality between strict Monge and Kantorovich formulation}
	Before proceeding, we introduce the sets of atomless/diffuse and deterministic probability measures that will play an important role in the following. Let $\X_1$ and $\X_2$ be two Polish spaces.
	\begin{equation}\label{eq: diff&det}
		\begin{gathered}
			\PPdiff(\X_1):= \left\{ \mu\in \PP(\X_1) : \mu(\{x_1\}) = 0 \ \forall x_1\in \X_1\right\},
			\\
			\PPdet(\X_1 \times \X_2) := \left\{ \pi\in \PP(\X_1\times \X_2) : \exists f\colon\X_1\to \X_2 \text{ s.t. }\pi = (\operatorname{id}\times f)_\sharp \mu_1, \ \mu_1= \operatorname{pr}^1_\sharp\pi \right\}.
		\end{gathered}
	\end{equation}
	
	\noindent They are Borel subsets of their respective spaces, see Lemma \ref{lemma: meas diff det}. Let us also recall an important characterization of deterministic couplings.

	\begin{lemma}\label{lemma: conc on graph}
		Let $\pi \in \PP(\X_1\times \X_2)$. Then, $\pi \in \PPdet(\X_1\times \X_2)$ if and only if there exists a measurable set $\Gamma \subset \X_1\times \X_2 $ such that $\pi(\Gamma)=1$ and for $\operatorname{pr}^1_\sharp\pi$-a.e. $x_1 \in \X_1$ there exists a unique $x_2\in \X_2$ such that $(x_1,x_2)\in \Gamma$.
	\end{lemma}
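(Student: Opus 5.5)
The plan is to prove the two implications separately. The forward direction is immediate, while the converse needs a measurable-selection argument.

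\emph{Necessity.} Assume $\pi = (\operatorname{id}\times f)_\sharp \mu_1$ with $\mu_1 = \operatorname{pr}^1_\sharp \pi$ and $f\colon \X_1\to\X_2$ Borel. The natural candidate is the graph $\Gamma := \{(x_1,f(x_1)) : x_1\in\X_1\}$.
\begin{itemize}
\item It is Borel, being the preimage of the diagonal of $\X_2\times\X_2$ under the Borel map $(x_1,x_2)\mapsto (f(x_1),x_2)$; the diagonal is closed since $\X_2$ is Hausdorff.
\item It carries full mass: $\pi(\Gamma) = \mu_1(\{x_1 : (x_1,f(x_1))\in\Gamma\}) = 1$.
\item Each vertical section $\Gamma_{x_1}$ is the singleton $\{f(x_1)\}$, for every $x_1$.
\end{itemize}
If the definition of $\PPdet$ allows $f$ to be only $\mu_1$-measurable, I would first replace it by a Borel version agreeing $\mu_1$-a.e.

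\emph{Sufficiency, step 1: reduce to a set with singleton sections everywhere.} Let $\Gamma$ be measurable with $\pi(\Gamma)=1$, and let $N\subset\X_1$ be the $\mu_1$-null set off which the sections $\Gamma_{x_1}$ are singletons. Enlarging $N$ to a Borel $\mu_1$-null set, I replace $\Gamma$ by $\Gamma' := \Gamma\cap\big((\X_1\setminus N)\times\X_2\big)$.
\begin{itemize}
\item Still $\pi(\Gamma')=1$, because $\pi(N\times\X_2)=\mu_1(N)=0$.
\item Every section of $\Gamma'$ has at most one point, so $\Gamma'$ is the graph of a function $f$ defined on $D:=\operatorname{pr}^1(\Gamma')$.
\end{itemize}
If "measurable" means $\pi$-measurable, I would also shrink $\Gamma'$ to a Borel subset of full $\pi$-measure. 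This keeps sections of at most one point.

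\emph{Sufficiency, step 2: measurability of $f$ (main obstacle).} In general $D$ is only analytic, not Borel. I would handle this with the Lusin–Souslin theorem. The restriction $\operatorname{pr}^1|_{\Gamma'}$ is an injective Borel map from a Borel subset of a Polish space. Hence it maps Borel subsets of $\Gamma'$ to Borel sets, so $D$ is Borel. Moreover, for any Borel $B\subset\X_2$,
\[
f^{-1}(B) = \operatorname{pr}^1\big(\Gamma'\cap(\X_1\times B)\big),
\]
which is Borel by the same theorem. Thus $f$ is Borel on $D$. I extend $f$ to all of $\X_1$ by a constant on $\X_1\setminus D$.

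\emph{Sufficiency, step 3: identify $\pi$.} First, $\mu_1(D)\ge \pi(\Gamma')=1$. It remains to show $\pi=(\operatorname{id}\times f)_\sharp\mu_1$. For Borel rectangles $A\times B$,
\[
\pi(A\times B)=\pi\big((A\times B)\cap\Gamma'\big)=\pi\big((A\cap f^{-1}(B))\times\X_2\big)=\mu_1\big(A\cap f^{-1}(B)\big).
\]
The middle equality holds because a point $(x_1,x_2)\in\Gamma'$ satisfies $x_2\in B$ exactly when $x_2 = f(x_1)\in B$; so both sets in the middle differ only by a $\pi$-null set off $\Gamma'$. Rectangles form a $\pi$-system generating the product $\sigma$-algebra, so the two measures agree everywhere. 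Alternatively, one can disintegrate $\pi$ and note that $\pi_{x_1}$ is concentrated on the singleton $\Gamma'_{x_1}$, giving $\pi_{x_1}=\delta_{f(x_1)}$ for $\mu_1$-a.e. $x_1$.
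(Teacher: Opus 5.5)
Your proof is correct. Necessity via the graph of a Borel version of $f$ works. For sufficiency, the three steps all check out:
\begin{itemize}
\item the reduction to a Borel $\Gamma'$ with at most one point in each vertical section;
\item the Lusin--Souslin theorem applied to the continuous injection $\operatorname{pr}^1|_{\Gamma'}$, which gives that $D$ is Borel and that $f^{-1}(B)=\operatorname{pr}^1\big(\Gamma'\cap(\mathrm{X}_1\times B)\big)$ is Borel;
\item the identification of $\pi$ on rectangles via the $\pi$--$\lambda$ theorem.
\end{itemize}

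The paper gives no argument of its own and only cites the Ambrosio--Bru\'e--Semola lectures. The comparison is therefore with the usual textbook proofs of this fact. These obtain measurability of $f$ with lighter tools, in one of two ways.
\begin{itemize}
\item Use inner regularity to shrink $\Gamma$ to a $\sigma$-compact set $\bigcup_n K_n$. A map whose graph is compact is continuous, so $f$ is continuous on each compact projection $\operatorname{pr}^1(K_n)$.
\item Observe that $f^{-1}(B)=\operatorname{pr}^1\big(\Gamma'\cap(\mathrm{X}_1\times B)\big)$ is analytic, hence universally measurable. Then $f$ is $\mu_1$-measurable and admits a Borel version.
\end{itemize}
These routes need only tightness of Borel probabilities on Polish spaces, or universal measurability of analytic sets. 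They produce a map that is Borel (or continuous on compact pieces) on a set of full $\mu_1$-measure, which is all the definition of deterministic couplings requires. Your use of Lusin--Souslin is heavier descriptive set theory. In return it gives a sharper statement: $f$ is Borel on the whole projection of any Borel set with at most one point per vertical section, without passing to a $\sigma$-compact subset or to a completion. Your rectangle-based identification of $\pi$ is equivalent to the usual test-function computation $\int\varphi\,d\pi=\int\varphi(x_1,f(x_1))\,d\mu_1(x_1)$.
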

	
	\begin{proof}
		See \cite[Theorem 2.3]{ambrosio2021lectures}.
	\end{proof}
	
	Recall that any competitor $\rmT\colon\X_1\times \PP(\X_1)\to \X_2$ for \eqref{eq: strict cost} directly induces a random coupling $\ttP\in \RGamma(\ttM_1,\ttM_2)$ considering
	\begin{equation}\label{eq: repr fully}
		\ttP:= \int_{\PP(\X_1)} \delta_{(\operatorname{id}, \rmT(\cdot,\mu_1))_\sharp\mu_1} d\ttM_1(\mu_1).
	\end{equation}
	Afterwards, random couplings of this form will be called \textit{fully deterministic}.
	
	\begin{lemma}\label{lemma: fully det}
		An element $\ttP\in \PP(\PP(\X_1\times \X_2))$ is fully deterministic if and only if 
		\[\operatorname{pr}^1_\sharp \text{ is }\ttP\text{-essentially injective} \quad \text{and} \quad \ttP\big(\PPdet(\X_1\times \X_2)\big) = 1.\]
	\end{lemma}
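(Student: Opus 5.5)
The forward direction is a direct check, and the converse is a measurable-selection construction.

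\emph{Forward direction.} Let $\ttP$ be given by \eqref{eq: repr fully} for some Borel $\rmT$. Then $\ttP = (G)_\sharp\ttM_1$ with $G(\mu_1):=(\operatorname{id},\rmT(\cdot,\mu_1))_\sharp\mu_1$, and $G$ is Borel by \cite[Corollary D.6]{Pinzi-Savare25}. Since $\operatorname{pr}^1_\sharp\circ G=\operatorname{id}$ on $\PP(\X_1)$, the map $\operatorname{pr}^1_\sharp$ is injective on the image of $G$. Moreover, $\ttP$ is concentrated on measures of the form $(\operatorname{id}\times f)_\sharp\mu_1$, which are deterministic by definition \eqref{eq: diff&det}. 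If one insists on Borel sets, injectivity holds on a Borel set of full $\ttP$-measure: take $\{\pi : \pi = G(\operatorname{pr}^1_\sharp\pi)\}$, which is Borel as the equalizer of two Borel maps into a Polish space.

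\emph{Converse.} Assume $\ttP(\PPdet)=1$ and that $\operatorname{pr}^1_\sharp$ is injective on a Borel set $\rmF\subset\PPdet(\X_1\times\X_2)$ with $\ttP(\rmF)=1$. Set $\ttM_1:=\operatorname{pr}^1_{\sharp\sharp}\ttP$.

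1. \emph{Invert $\operatorname{pr}^1_\sharp$.} By Lusin--Souslin, $\operatorname{pr}^1_\sharp|_\rmF$ is a Borel isomorphism onto the Borel set $\operatorname{pr}^1_\sharp(\rmF)$. Let $S$ be its Borel inverse, so that $\ttP=S_\sharp\ttM_1$.

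2. \emph{Select the maps jointly measurably.} I need a Borel $\rmT\colon\X_1\times\PP(\X_1)\to\X_2$ with $S(\mu_1)=(\operatorname{id},\rmT(\cdot,\mu_1))_\sharp\mu_1$ for $\ttM_1$-a.e.\ $\mu_1$. The plan is to use a measurable disintegration. Consider the measure $\overline{\ttP}:=\int \pi\otimes\delta_{\operatorname{pr}^1_\sharp\pi}\,d\ttP(\pi)$ on $\X_1\times\X_2\times\PP(\X_1)$, and disintegrate it with respect to the projection onto $(x_1,\mu_1)$, whose marginal is $\overline{\ttM_1}$ from \eqref{eq: overline RM}. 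This gives Borel kernels $\kappa_{x_1,\mu_1}\in\PP(\X_2)$. Because $\pi=S(\mu_1)$ is deterministic, the disintegration of $S(\mu_1)$ with respect to $\mu_1$ is a Dirac $\mu_1$-a.e. Uniqueness of the disintegration of $\overline{\ttP}$ fibrewise then gives that $\kappa_{x_1,\mu_1}$ is a Dirac for $\overline{\ttM_1}$-a.e.\ $(x_1,\mu_1)$.

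3. \emph{Define $\rmT$.} The set $\{\kappa\text{ is a Dirac}\}$ is Borel, because the set of Dirac masses is closed in $\PP(\X_2)$. On this set, the map $\delta_y\mapsto y$ is Borel (it is the inverse of a homeomorphism onto its image). Define $\rmT(x_1,\mu_1)$ as the atom of $\kappa_{x_1,\mu_1}$ there, and a fixed point $\bar x_2$ elsewhere.

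4. \emph{Verify.} By Fubini on $\overline{\ttM_1}$, for $\ttM_1$-a.e.\ $\mu_1$ we have $\kappa_{x_1,\mu_1}=\delta_{\rmT(x_1,\mu_1)}$ for $\mu_1$-a.e.\ $x_1$, hence $S(\mu_1)=(\operatorname{id},\rmT(\cdot,\mu_1))_\sharp\mu_1$. Integrating against $\ttM_1$ gives \eqref{eq: repr fully}.

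The main obstacle is Step 2: checking that the disintegration kernel of $\overline{\ttP}$ restricted to the fibre $\mu_1$ really coincides $\mu_1$-a.e.\ with the disintegration of $S(\mu_1)$. I would do this by testing against countably many product functions $f(x_1)g(x_2)h(\mu_1)$ and using the injectivity of $\operatorname{pr}^1_\sharp$, which is what makes $\mu_1$ determine $\pi$.
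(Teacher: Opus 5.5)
Your proof is correct, including the point you flag in Step 2: $\ttP=S_\sharp\ttM_1$ gives $\overline{\ttP}=\int S(\mu_1)\otimes\delta_{\mu_1}\,d\ttM_1(\mu_1)$, so testing against countably many $f(x_1)g(x_2)$ and arbitrary $h(\mu_1)$ yields $S(\mu_1)=\int\delta_{x_1}\otimes\kappa_{x_1,\mu_1}\,d\mu_1(x_1)$ for $\ttM_1$-a.e.\ $\mu_1$, and the Dirac property of $\kappa$ then follows from uniqueness of the disintegration of the deterministic plan $S(\mu_1)$. The paper only defers to \cite[Lemma 6.2]{PS25convex}; your route (Borel inversion of $\operatorname{pr}^1_\sharp$ on a full-measure set, then disintegration of $\overline{\ttP}$ with respect to $(x_1,\operatorname{pr}^1_\sharp\pi)$ to extract a jointly Borel $\rmT$) is the same disintegration mechanism the paper itself uses around \eqref{eq: repr fully overline} and in Step 4 of Theorem \ref{thm: BreRM}.
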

	
	\begin{proof}
		The proof is the very same of \cite[Lemma 6.2]{PS25convex}.
	\end{proof}
	
	The condition that \textit{$\operatorname{pr}^1_\sharp$ is $\ttP$-essentially injective} can be equivalently rephrased as: there exists a $\ttP$-measurable map $\mathcal{V}: \PP(\X_1) \to \PP(\X_1 \times \X_2)$ such that $\operatorname{pr}^1_\sharp (\mathcal{V}(\mu_1)) = \mu_1$ for $\ttM_1$-a.e. $\mu_1 \in \PP(\X_1)$ and $\ttP = \mathcal{V}_\sharp \ttM_1$, where $\ttM_1 := \operatorname{pr}^1_{\sharp\sharp} \ttP$.
	
	\smallskip
	
	Using the notation of \eqref{eq: overline RM}, observe that \eqref{eq: repr fully} is equivalent to
	\begin{equation}\label{eq: repr fully overline}
		\overline{\ttP}= \int_{\X_1\times \PP(\X_1)} \delta_{\big((x_1,\rmT(x_1,\mu_1)), (\operatorname{id},\rmT(\cdot,\mu_1))_\sharp\mu_1\big)} d\overline{\ttM}_1(x_1,\mu_1) \in \PP(\X_1 \times \X_2 \times \PP(\X_1\times \X_2)).
	\end{equation}

	\begin{theorem}\label{thm: pratelli}
		For $i=1,2$, let $\ttM_i \in \PP(\PP(\X_i))$. Let $\cost\colon\X_1\times \X_2 \to [0,+\infty)$ be continuous and bounded. Assume $\ttM_1\in \PPdiff(\PP(\X_1))$ and $\ttM_1(\PPdiff(\X_1)) = 1$.
		Then $\calC(\ttM_1,\ttM_2) = \calC_{sM}(\ttM_1,\ttM_2)$.
	\end{theorem}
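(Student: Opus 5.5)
Since $\calC\le\calC_{sM}$ holds by \eqref{eq: ineq monge}, it suffices to show that for every $\varepsilon>0$ there is a Borel $\rmT\colon\X_1\times\PP(\X_1)\to\X_2$ with $\cT_\sharp\ttM_1=\ttM_2$ and strict Monge cost at most $\calC(\ttM_1,\ttM_2)+\varepsilon$. Because $\cost$ is bounded, all the integrability hypotheses (Assumption \ref{ass: integr} with constant $\tta_i$) hold trivially and all costs are finite. The plan has two layers of approximation, one at the level of random measures and one inside each fiber.

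\textbf{Outer layer.} Apply Pratelli's theorem \cite[Theorem B]{pratelli2007equality} to the Polish spaces $\PP(\X_1)$, $\PP(\X_2)$ with the cost $\C$. This cost is continuous and bounded: $\cost$ is continuous and bounded, so $\C$ is lower semicontinuous by Lemma \ref{lemma: lsc C}. Upper semicontinuity follows from the stability of optimal plans under narrow convergence, or by writing $\C$ through duality with bounded continuous potentials; I take continuity as a routine point to check. Since $\ttM_1$ is atomless, Pratelli's theorem gives a Borel $\cT_\varepsilon\colon\PP(\X_1)\to\PP(\X_2)$ with $(\cT_\varepsilon)_\sharp\ttM_1=\ttM_2$ and
\[
\int\C(\mu_1,\cT_\varepsilon(\mu_1))\,d\ttM_1\le\calC(\ttM_1,\ttM_2)+\varepsilon/2 .
\]

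\textbf{Inner layer.} For $\ttM_1$-a.e.\ $\mu_1$, which is atomless, Pratelli's theorem again gives a map $T\colon\X_1\to\X_2$ with $T_\sharp\mu_1=\cT_\varepsilon(\mu_1)$ and $\int\cost(x,T(x))\,d\mu_1\le\C(\mu_1,\cT_\varepsilon(\mu_1))+\varepsilon/2$. Integrating this against $\ttM_1$ yields the desired bound, and the pushforward condition $\cT_\sharp\ttM_1=\ttM_2$ holds by construction. So everything reduces to one step.

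\textbf{Main obstacle: measurable selection.} We need a jointly Borel $\rmT(x_1,\mu_1)$ realizing these fiberwise maps. My first attempt is to consider the set
\[
\mathcal{S}=\Bigl\{(\mu_1,\pi):\ \pi\in\PPdet(\X_1\times\X_2),\ \operatorname{pr}^1_\sharp\pi=\mu_1,\ \operatorname{pr}^2_\sharp\pi=\cT_\varepsilon(\mu_1),\ \textstyle\int\cost\,d\pi\le\C(\mu_1,\cT_\varepsilon(\mu_1))+\varepsilon/2\Bigr\}.
\]
This set is Borel by Lemma \ref{lemma: meas diff det} and the Borel measurability of $\C$ and $\cT_\varepsilon$. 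Its sections are nonempty for $\ttM_1$-a.e.\ $\mu_1$ by Pratelli's theorem. The von Neumann selection theorem \cite[Theorem 6.9.1]{Bogachev07} then gives a universally measurable selection $\mu_1\mapsto\pi_{\mu_1}$, which we modify on a null set to be Borel. The resulting $\ttP=\int\delta_{\pi_{\mu_1}}\,d\ttM_1$ has $\operatorname{pr}^1_\sharp$ injective and is concentrated on deterministic couplings, so it is fully deterministic by Lemma \ref{lemma: fully det}. This produces the jointly measurable $\rmT$, e.g.\ by taking a Borel version of the disintegration barycenter map $(x_1,\mu_1)\mapsto\int x_2\,d(\pi_{\mu_1})_{x_1}$ after embedding $\X_2$ into a suitable space, or as in \cite[Lemma 6.2]{PS25convex}.

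Should the deterministic constraint cause measurability problems, the fallback is a single-lift construction. Choose a Borel map assigning to each atomless $\mu_1$ a parametrization, and apply Lemma \ref{lemma:key}, Claim 4, to build $\rmT$ explicitly.
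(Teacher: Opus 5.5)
Your proposal is correct and follows essentially the paper's proof. You apply Pratelli's Theorem B to the continuous bounded cost $\C$ on $\PP(\X_1)\times\PP(\X_2)$ to get $\cT_\varepsilon$, then use a von Neumann selection on a Borel set of deterministic $\varepsilon/2$-optimal couplings (with nonempty fibers by Pratelli again) to build $\ttP^\varepsilon$, and finally obtain $\rmT^\varepsilon$ from Lemma \ref{lemma: fully det}; the only cosmetic difference is that the paper selects a right inverse $G^\varepsilon$ of $(\operatorname{pr}^1_\sharp,\operatorname{pr}^2_\sharp)$ on the $\cT$-independent set $\cA_\varepsilon$ and sets $\mathcal{V}^\varepsilon(\mu_1)=G^\varepsilon(\mu_1,\cT^\varepsilon(\mu_1))$, whereas you select directly from the $\cT_\varepsilon$-dependent set $\mathcal{S}$, and your fallback construction is not needed.
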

	
	\begin{proof}
		Since $\cost$ is bounded, \cite[Theorem 6.8]{ambrosio2021lectures} implies the continuity (and boundedness as well) of $\C\colon \PP(\X_1)\times \PP(\X_2)\to [0,+\infty)$. Then, we can apply \cite[Theorem B]{pratelli2007equality}, so that $\calC(\ttM_1,\ttM_2) = \calC_M(\ttM_1,\ttM_2)$, and in particular for every $\varepsilon>0$ there exists $\Pi^\varepsilon \in \Gamma(\ttM_1,\ttM_2) \cap \PPdet(\PP(\X_1)\times \PP(\X_2))\text{ such that }$
		\[\calC_M(\ttM_1,\ttM_2) \leq \int \C(\mu_1,\mu_2) d\Pi^{\varepsilon}(\mu_1,\mu_2) \leq \calC(\ttM_1,\ttM_2) + \frac\varepsilon2.\]
		In particular, for all $\varepsilon>0$ there exists $\cT^\varepsilon\colon\PP(\X_1) \to \PP(\X_2)$ such that $\Pi^\varepsilon = (\operatorname{id},\cT^\varepsilon)_\sharp \ttM_1$.
		
		Now, fix $\varepsilon>0$. The marginal map $(\operatorname{pr}^1_\sharp,\operatorname{pr}^2_\sharp)$ is surjective from the set
		\[\cA_\varepsilon:= \left\{ \pi \in \PPdet(\X_1\times \X_2) : \operatorname{pr}^1_\sharp\pi\in \PPdiff(\X_1), \,  \int_{\X_1\times \X_2} \cost(x_1,x_2) d\pi(x_1,x_2) \leq \C(\operatorname{pr}^1_\sharp\pi, \operatorname{pr}^2_\sharp \pi)+\frac \varepsilon 2\right\}\]
		to the whole product space $\PPdiff(\X_1)\times \PP(\X_2)$. This is again a consequence of \cite[Theorem B]{pratelli2007equality}, and the continuity of $\cost$. Thanks to Lemma \ref{lemma: meas diff det} and the measurability of the other involved operations, the set $\mathcal{A}_\varepsilon$ is Borel measurable as a subset of $\PP(\X_1\times \X_2)$. Then, there exists a universally measurable map $G^\varepsilon\colon\PPdiff(\X_1)\times \PP(\X_2) \to \cA_\varepsilon$ that is a right-inverse for $(\operatorname{pr}^1_\sharp,\operatorname{pr}^2_\sharp)$, see e.g. \cite[Theorem 6.9.1]{Bogachev07}, that we can use to define 
		\[\ttP^\varepsilon:= G^\varepsilon_\sharp \Pi^\varepsilon \in \PP(\PP(\X_1\times \X_2)).\]
		This is well-defined because the first marginal of $\Pi^\varepsilon$ is $\ttM_1$, which is concentrated on $\PPdiff(\X_1)$. Since $G^\varepsilon$ is a right-inverse of the marginal map $(\operatorname{pr}^1_\sharp,\operatorname{pr}^2_\sharp)$, we have that $\Pi^\varepsilon = (\operatorname{pr}^1_\sharp, \operatorname{pr}^2_\sharp)_\sharp \ttP^\varepsilon$. Moreover, $\ttP^\varepsilon$ is concentrated on $\cA_\varepsilon\subset \PPdet(\X_1\times \X_2)$ and $\ttP^{\varepsilon} = \mathcal{V}^\varepsilon_\sharp \ttM_1$, where $\mathcal{V}^\varepsilon(\mu_1) = G^\varepsilon(\mu_1, \cT^\varepsilon(\mu_1))$. Then $\operatorname{pr}^1_\sharp$ is $\ttP$-essentially injective and by Lemma \ref{lemma: fully det} there exists $\rmT^\varepsilon\colon\X_1\times \PP(\X_1) \to \X_2$ representing $\ttP^\varepsilon$ as in \eqref{eq: repr fully}. In particular
		\begin{equation*}
			\begin{aligned}
				\hspace{2cm}\calC_{sM}(\ttM_1,\ttM_2) \leq & \int_{\PP(\X_1)} \int_{\X_1} \cost(x_1,\rmT^\varepsilon(x_1,\mu_1)) d\mu_1(x_1) d\ttM_1(\mu_1) 
				\\
				= & \int_{\PP(\X_1 \times \X_2)} \int_{\X_1\times \X_2} \cost(x_1,x_2) d\pi(x_1,x_2) d\ttP^{\varepsilon}(\pi)
				\\
				= &
				\int_{\PP(\X_1)\times \PP(\X_2)} \int_{\X_1\times \X_2} \cost(x_1,x_2) d\big[G^\varepsilon(\mu_1,\mu_2)\big](x_1,x_2) d\Pi^{\varepsilon}(\mu_1,\mu_2)
				\\
				\leq &
				\int_{\PP(\X_1) \times \PP(\X_2)} \C(\mu_1,\mu_2) d\Pi^{\varepsilon}(\mu_1,\mu_2) + \frac\varepsilon2
				\\
				\leq & \calC(\ttM_1,\ttM_2) + \varepsilon. \hspace{9.75cm} \qedhere
			\end{aligned}
		\end{equation*}
	\end{proof}

	\noindent The same result holds in the setting of Examples \ref{ex: wass} and \ref{ex: wass 2}.
	
	\begin{theorem}\label{thm: pratelli2}
		Let $(\X,\rmd)$ be a complete and separable metric space. Let $p\in[1,+\infty)$ and assume $\cost(x_1,x_2) = \rmd^p(x_1,x_2)$. Let $\ttM_1,\ttM_2 \in \PP_p(\PP_p(\X))$, that is 
		\[\int_{\PP(\X)} \int_{\X} \rmd^p(x_1,x_0) d\mu_1(x_1) d\ttM_1(\mu_1) + \int_{\PP(\X)} \int_{\X} \rmd^p(x_2,x_0) d\mu_2(x_2) d\ttM_2(\mu_2)<+\infty \qquad \text{for some }x_0\in \X. \]
		Assume that $\ttM_1\in \PPdiff(\PP(\X))$ and $\ttM_1(\PPdiff(\X)) =1$. Then for every $\varepsilon>0$ there exists $\rmT^\varepsilon\colon\X \times \PP_p(\X) \to \X$ such that, say $\cT^\varepsilon(\mu_1) := \rmT^\varepsilon(\cdot,\mu_1)_\sharp\mu_1$,
		\begin{equation}
			\cT^\varepsilon_\sharp \ttM_1 = \ttM_2 \qquad \text{and} \qquad \int_{\PP_p(\X)} \int_{\X} \rmd^p(x,\rmT^\varepsilon(x,\mu)) d\mu(x) d\ttM_1(\mu) \leq \sfW_p^p(\ttM_1,\ttM_2) +\varepsilon.
		\end{equation}
	\end{theorem}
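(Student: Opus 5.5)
The plan is to mimic the proof of Theorem \ref{thm: pratelli}, replacing the boundedness of $\cost$ by the $p$-moment integrability, and working on the Polish spaces $(\PP_p(\X),\sfw_p)$ and $(\PP_p(\PP_p(\X)),\sfW_p)$. Since $\C=\sfw_p^p$ is continuous on $\PP_p(\X)\times\PP_p(\X)$ with respect to $\sfw_p$, and $\ttM_1$ is atomless as a measure on $\PP_p(\X)$ (the Borel structures coincide), Pratelli's theorem \cite[Theorem B]{pratelli2007equality} applies to the cost $\sfw_p^p$ on the metric space $\PP_p(\X)$. It is applicable because $\sfw_p^p(\mu_1,\mu_2)\le 2^{p-1}(\ttA(\mu_1)+\ttA(\mu_2))$ with $\ttA(\mu)=\int\rmd^p(\cdot,x_0)d\mu\in L^1(\ttM_i)$, so the Kantorovich value is finite. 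This gives, for every $\varepsilon>0$, a Borel $\cT^\varepsilon\colon\PP_p(\X)\to\PP_p(\X)$ with $\cT^\varepsilon_\sharp\ttM_1=\ttM_2$ and $\int\sfw_p^p(\mu,\cT^\varepsilon(\mu))d\ttM_1\le\sfW_p^p(\ttM_1,\ttM_2)+\varepsilon/2$.

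Next, I will define the set
\[\cA_\varepsilon:=\Big\{\pi\in\PPdet(\X\times\X)\cap\PP_{\tta\oplus\tta}(\X\times\X):\operatorname{pr}^1_\sharp\pi\in\PPdiff(\X),\ \int\rmd^p\,d\pi\le\sfw_p^p(\operatorname{pr}^1_\sharp\pi,\operatorname{pr}^2_\sharp\pi)+\tfrac\varepsilon2\Big\},\]
with $\tta=\rmd^p(\cdot,x_0)$. It is Borel by Lemma \ref{lemma: meas diff det} and the Borel measurability of $\pi\mapsto\int\rmd^p d\pi$, $\ttA$ and $\sfw_p^p$ (the latter is lower semicontinuous by Lemma \ref{lemma: lsc C}). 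Then I will show that the marginal map $(\operatorname{pr}^1_\sharp,\operatorname{pr}^2_\sharp)$ sends $\cA_\varepsilon$ onto $(\PPdiff(\X)\cap\PP_p(\X))\times\PP_p(\X)$. This is again \cite[Theorem B]{pratelli2007equality}, now at the base level: for an atomless $\mu_1$ and finite cost $\sfw_p^p(\mu_1,\mu_2)<\infty$, the continuous cost $\rmd^p$ admits transport maps of cost within $\varepsilon/2$ of optimal. Equivalently, it is Lemma \ref{lemma:key}, Claim 4, with $\Z_1$ generating the $\sigma$-algebra. This step is where the unboundedness of $\cost$ is felt. Pratelli's result only needs continuity and finiteness of the optimal cost, but I must check that the statement I cite allows a cost taking finite but unbounded values. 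If needed, I would first truncate or use that $\rmd^p\le 2^{p-1}(\tta\oplus\tta)$ gives an integrable dominating function.

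Then I will take a universally measurable right inverse $G^\varepsilon$ of the marginal map on $\cA_\varepsilon$ via \cite[Theorem 6.9.1]{Bogachev07}. I will set $\mathcal V^\varepsilon(\mu_1):=G^\varepsilon(\mu_1,\cT^\varepsilon(\mu_1))$, which is defined for $\ttM_1$-a.e.\ $\mu_1$ since $\ttM_1$ is concentrated on $\PPdiff(\X)\cap\PP_p(\X)$, and put $\ttP^\varepsilon:=\mathcal V^\varepsilon_\sharp\ttM_1$. This $\ttP^\varepsilon$ is concentrated on deterministic couplings, and $\operatorname{pr}^1_\sharp$ is $\ttP^\varepsilon$-essentially injective, so Lemma \ref{lemma: fully det} yields a Borel $\rmT^\varepsilon\colon\X\times\PP_p(\X)\to\X$ representing $\ttP^\varepsilon$ as in \eqref{eq: repr fully}; modifying it on a $\overline{\ttM}_1$-null set is harmless. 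The pushforward identity follows from $\cT^\varepsilon(\mu_1)=\operatorname{pr}^2_\sharp\mathcal V^\varepsilon(\mu_1)$. The cost estimate is the same chain of inequalities as in Theorem \ref{thm: pratelli}: the integral of $\rmd^p(x,\rmT^\varepsilon(x,\mu))$ equals $\int\!\int\rmd^p d\pi\,d\ttP^\varepsilon$, which is at most $\int\sfw_p^p(\mu,\cT^\varepsilon(\mu))d\ttM_1+\varepsilon/2\le\sfW_p^p+\varepsilon$.

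The main obstacle is the first step: verifying that Pratelli's theorem applies on $\PP_p(\X)$ with the unbounded continuous cost $\sfw_p^p$. Concretely, I need that the atomlessness of $\ttM_1$ as a measure on $\PP(\X)$ transfers to $\PP_p(\X)$, which holds since the narrow and $\sfw_p$ Borel $\sigma$-algebras agree on $\PP_p(\X)$ and singletons are Borel. I also need the finiteness of the Kantorovich value, which the moment bound provides.
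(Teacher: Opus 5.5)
Your proposal is correct and follows the paper's proof. The paper likewise reruns the argument of Theorem \ref{thm: pratelli} on the Polish space $(\PP_p(\X),\sfw_p)$, where $\sfw_p^p$ is finite and continuous, so that \cite[Theorem B]{pratelli2007equality} applies both at the level of random measures and at the base level with cost $\rmd^p$. The obstacle you flag is not a real one: Pratelli's theorem requires only continuity of the cost, not boundedness. Boundedness was used in Theorem \ref{thm: pratelli} only to get narrow continuity of $\C$, and switching to the $\sfw_p$-topology supplies exactly that.
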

	
	\begin{proof}
		The hypothesis imply that $\ttM_1$ and $\ttM_2$ are concentrated on the complete and separable metric space $\PP_p(\X)$, endowed with the Wasserstein metric $\sfw_p$. Moreover, $\sfw_p^p\colon\PP_p(\X) \times \PP_p(\X) \to [0,+\infty)$ is finitely valued and continuous. Then, the same argument of the previous proof allows us to conclude. 
	\end{proof}
	
	\normalcolor
	
	It is interesting to notice that it may happen $\calC(\ttM_1,\ttM_2) = \calC_M(\ttM_1,\ttM_2)<+\infty$, but the set of competitors for the strict Monge \eqref{eq: strict cost} formulation is empty, giving $\calC_{sM}(\ttM_1,\ttM_2) = +\infty$. 
	
	\begin{example}
		Let $\cost:\X_1\times \X_2 \to [0,+\infty)$ be bounded.
		
		(1) Let $\mu_1 \in \PPdiff(\X_1)$. Define $\ttM_1 := \chi_\sharp \mu_1$, where $\chi(x):= \delta_x$. Then, clearly $\ttM_1 \in \PPdiff(\PP(\X_1))$, so that, thanks to the continuity of $\C:\PP(\X_1)\times \PP(\X_2) \to [0,+\infty)$ and \cite[Theorem B]{pratelli2007equality}, for every $\ttM_2\in \PP(\PP(\X_2))$ it holds $\calC(\ttM_1,\ttM_2) = \calC_M(\ttM_1,\ttM_2)<+\infty$. However, as soon as $\ttM_2$ is not concentrated on the image of $\chi$, we have $\calC_{sM}(\ttM_1,\ttM_2) = +\infty$ due to the emptiness of the set of competitors.
		
		(2) Let $ \operatorname{em}_i: \operatorname{Sim}^\infty \times \X_i^\infty \to \PP(\X_i)$ be defined as
		\[ \operatorname{Sim} ^\infty:= \left\{ (a_j)_{j\in \N} \in [0,1]^\infty : a_j\geq 0,\ a_j\geq a_{j+1}, \  \sum_{j\in \N} a_j = 1,\right\}, \quad \operatorname{em}\big( (a_i)_{i\in \N}, (x_i)_{i\in \N} \big) := \sum_{i\in \N} a_i \delta_{x_i}.\]
		For every measure $\sigma \in \PP([0,1]^\infty)$ concentrated on $\operatorname{Sim}^\infty$ and any measure $\Sigma \in \PP(\X_1^\infty)$, define $\ttM_1 = \ttM_{\sigma,\Sigma} := (\operatorname{em}_1)_\sharp (\sigma\otimes \Sigma)$. Then, for every $\ttM_2 \in \PP(\PP(\X_2))$ such that $\ttM_2(\PPdiff(\X_2)) >0$, there are no competitors for \eqref{eq: strict cost}. Also, if $\ttM_2$ is built in the same way but with an incompatible laws for weights $\sigma'\in \PP(\operatorname{Sim}^\infty)$, then the same holds. Indeed, assume $\ttM_2 = (\operatorname{em}_2)_\sharp (\sigma' \otimes \Sigma')$, for some $\Sigma'\in \PP(\X_1^\infty)$ and $\sigma'\in \PP([0,1]^\infty)$ concentrated on $\operatorname{Sim}^\infty$, and assume that there exists $\rmT:\X_1\times \PP(\X_1) \to \X_2$ such that $\cT_\sharp \ttM_1 = \ttM_2$. Then, for $\ttM_1$-a.e. $\mu_1 = \sum_{i\in \N} a_i \delta_{x_i}$, it holds $\cT(\mu_1) = \sum_{i\in \N} a_i \delta_{\rmT(x_i, \mu_1)}$. In particular, the weights can merge if $\rmT(x_i,\mu_1) = \rmT(x_{k},\mu_1)$ for some $i\neq k$, but this may contradict the existence of such a map $\rmT$. For example, if $\sigma$-a.e. $(a_i)$ is such that $a_1\geq 1/2$ and $\sigma'$-a.e. $(a_i')$ satisfies $a_1'<1/2$, then $\rmT$ cannot exist.
	\end{example}
	
	In general, the problem of whether $\calC_M = \calC_{sM}$ relates to the following question: given $\cT\colon\PP(\X_1) \to \PP(\X_2)$, under which assumptions there exists a function $\rmT\colon\X_1\times \PP(\X_1) \to \X_2$ such that $\cT(\mu_1) = \rmT(\cdot,\mu_1)_\sharp\mu_1 $? For a deeper understanding on this problem, we refer to \cite{LS25}.

	\subsection{Existence and uniqueness of the \texorpdfstring{$L^p$}{}-strict Monge problem}\label{subsec: BreBan}
	
	\subsubsection{The \texorpdfstring{$L^p$}{}-Monge problem in Banach spaces}\label{subsub: banach monge}

	Before proceeding, in this subsection we clarify under which set of assumptions we have a unique solution to the $L^p$-Monge problem in a separable Banach space $(\rmB,\|\cdot\|)$. Recall that a measure $\mu\in \PP(\rmB)$ is said to be a \textit{non-degenerate Gaussian}, and we write $\mu\in \PP^{\mathsf g}(\rmB)$, if 
	\begin{equation*}
		\text{for all }z\in \rmB^*\setminus\{0\} \ z_\sharp \mu\in \PP(\R) \text{ is a non-degenerate Gaussian}.
	\end{equation*}
	Afterwards, we say that a Borel subset $N\subset \rmB$ is \textit{Gaussian null} if 
	\begin{equation*}
		\mu(N) = 0 \quad \forall \mu\in \PP^{\sfg}(\rmB).
	\end{equation*}
	Moreover, we will say that a measure is Gaussian regular, i.e. $\mu\in \PP^{\mathsf{gr}}(\rmB)$ if $\mu(N) = 0$ for all Gaussian null set $N\subset \rmB$. Recall that, if $\rmB = \R^d$ (whatever its norm is) the Gaussian null sets coincide with Borel sets that have null Lebesgue measure, and Gaussian regular measures are the ones absolutely continuous with respect to Lebesgue.
	
	The interest in Gaussian null sets is justified by the next theorem, which is a generalization to Banach spaces of the classic Rademacher's theorem. 
	
	\begin{theorem}\label{thm: G-diff Lip}
		Let $\mathcal{O}\subset \rmB$ be a non-empty open set and $f\colon\mathcal{O}\to \R$. If $f$ is locally Lipschitz, then there exists $G\subset \mathcal{O}$ Gaussian null such that $f$ is Gateaux differentiable at every $x\in \mathcal{O}\setminus G$.
	\end{theorem}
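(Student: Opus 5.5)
This is the infinite-dimensional Rademacher theorem of Aronszajn, Christensen and Mankiewicz, in the form given by Phelps \cite{Phelps78}. Aronszajn null sets coincide with Gaussian null sets in a separable Banach space (Csörnyei), so it suffices to get the statement for either class. I would not reprove it from scratch. The plan is to reduce to the global Lipschitz case and then run the standard one-dimensional argument along a dense sequence of directions.

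\emph{Step 1: localization.} Since $\rmB$ is separable, $\mathcal{O}$ is Lindelöf. Cover it by countably many open balls $B_k$ with $\overline{B_k}\subset\mathcal{O}$ on which $f$ is $L_k$-Lipschitz. Gaussian null sets are stable under countable unions, because each $\mu\in\PP^{\sfg}(\rmB)$ is countably additive. Gâteaux differentiability is a local property. So it suffices to treat each $B_k$, and by McShane extension we may take $f$ Lipschitz on all of $\rmB$.

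\emph{Step 2: directional derivatives and linearity.} Fix a dense sequence $(v_j)$ in $\rmB$. For each $j$, let $N_j$ be the Borel set of points where the one-sided derivative $\partial_{v_j}f$ fails to exist as a two-sided limit. I would show each $N_j$ is Gaussian null. Take $\mu$ a nondegenerate Gaussian and write it, via a measurable splitting, as a mixture along lines parallel to $v_j$ of measures absolutely continuous with respect to $\mathcal{L}^1$. Lebesgue's theorem for Lipschitz functions on $\R$ then gives $\mu(N_j)=0$.

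\emph{Step 3: identification of the Gâteaux derivative.} Set $D:=\{x: \partial_{v}f(x) \text{ exists for all } v\in\operatorname{span}_{\mathbb Q}\{v_j\} \text{ and is } \mathbb{Q}\text{-linear in } v\}$. Off $D$, the map $v\mapsto\partial_v f(x)$ is $L$-Lipschitz on a dense set, so it extends to a bounded linear functional. The Lipschitz bound on $f$ then upgrades existence of directional derivatives on a dense set of directions to Gâteaux differentiability in every direction, by a standard $\varepsilon/3$ argument.

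\emph{Step 4 (main obstacle): linearity of the directional derivative.} The remaining task is to show that $\partial_{v+w}f=\partial_v f+\partial_w f$ holds off a Gaussian null set. The classical route uses the integration by parts identity $\int \partial_v f\,g\,d\mu=-\int f\,\partial_v^\ast g\,d\mu$ along Cameron–Martin directions of a Gaussian $\mu$, combined with quasi-invariance. This forces linearity $\mu$-a.e., but only for $v,w$ in the Cameron–Martin space of $\mu$. One must therefore choose the dense sequence, for each fixed Gaussian $\mu$, inside its Cameron–Martin space, and conclude $\mu(G)=0$ for every $\mu$. That per-measure bookkeeping is delicate, so at this step I would simply invoke \cite[Theorem 6.4]{Phelps78} (equivalently, Benyamini–Lindenstrauss, Theorem 6.42) rather than reprove it.
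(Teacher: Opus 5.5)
Your conclusion stands, because it rests on the same literature the paper uses. The paper's route differs from your sketch. It combines Aronszajn's theorem \cite{Aronszajn76} (the Gateaux non-differentiability set of a Lipschitz function is Aronszajn null) with Phelps's result \cite{Phelps78} that Aronszajn null sets are Gaussian null. That route is what takes care of your ``per-measure bookkeeping''. Aronszajn's decomposition of the bad set into pieces that are null on lines parallel to $x_n$ is available for \emph{every} complete sequence $(x_n)$. Given a non-degenerate Gaussian $\mu$, Phelps takes $(x_n)$ inside its Cameron--Martin space $H_\mu$, which is dense since $z_\sharp\mu$ is non-degenerate for all $z\neq0$. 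Quasi-invariance plus Fubini then makes each piece $\mu$-null.

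The self-contained sketch around your citation has a false step. In Step 2 you fix $(v_j)$ independently of $\mu$ and claim that $\mu$ disintegrates along lines parallel to $v_j$ into measures absolutely continuous with respect to $\mathcal{L}^1$. Split $\rmB=\ker z\oplus\R v_j$ with $z(v_j)=1$. The conditional law of the $\R v_j$-coordinate given the $\ker z$-component is Gaussian with variance $|v_j|_{H_\mu}^{-2}$, where $|v_j|_{H_\mu}=+\infty$ if $v_j\notin H_\mu$. So for $v_j\notin H_\mu$ the conditionals are Dirac masses. The graph of the Borel regression function then meets each line parallel to $v_j$ in a single point, yet carries full $\mu$-measure. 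In infinite dimensions no nonzero vector lies in $H_\mu$ for every non-degenerate $\mu$. Hence Step 2 as written does not show that $N_j$ is Gaussian null; that is true, but only a posteriori, as a consequence of the theorem.

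The repair is the one you reach only in Step 4: choose the directions in $H_\mu$ from the start. The bookkeeping is then harmless. Your Step 3 argument (where ``Off $D$'' should read ``On $D$''), together with the trivial converse, shows that $D$ equals the set of Gateaux differentiability points for \emph{any} dense sequence. So $\rmB\setminus D$ is Borel and does not depend on the choice. Proving $\mu(\rmB\setminus D)=0$ with a $\mu$-adapted sequence, for each $\mu$, makes it a Borel Gaussian null set, as the paper's definition requires.

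With that fix your sketch is a legitimate direct alternative to the Aronszajn route. Its only hard input is the $\mu$-a.e.\ linearity in Step 4, via Cameron--Martin integration by parts. The paper's route instead gives the finer Aronszajn-null conclusion, at the cost of Aronszajn's machinery.
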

	
	Its proof is a consequence of the main results in \cite{Aronszajn76,Phelps78} and follows from this argument: Aronszajn defined a class of null sets for which the points of (Gateaux) non-differentiability of a Lipschitz function form a null set; then, Phelps proved that every Aronszajn null set is Gaussian null, yielding in particular Theorem \ref{thm: G-diff Lip}. It is worth mentioning that Cs\"ornyei \cite{Csonryei99} proved that the class of Aronszajn null sets coincides with the one of Gaussian null sets, which also coincides with the class of cube null sets.

	An important tool for us is the dual multifunction, which encodes the strict convexity of the norm:
	\begin{equation}
		J\colon\rmB\rightrightarrows \rmB^*, \quad J(x) = \left\{z \in \rmB^* \, : \, z(x) = \|x\|^2, \, \|z\|_* = \|x\|\right\}.
	\end{equation}
	Notice that $J(0) = \{0\}$ and $0 \notin J(x)$ if $x\neq 0$.
	
	\begin{lemma}\label{lemma: strict conv norm}
		Let $(\rmB,\|\cdot\|)$ be a Banach space. The following are equivalent:
		\begin{enumerate}
			\item $\|\cdot\|$ is strictly convex, that is 
			\[x,y \in \rmB, \ x\neq y, \ \|x\| = \|y\| = 1 \implies \left\|\frac{x+y}{2}\right\| <1;\]
			\item $J(x) \cap J(y) = \emptyset$ whenever $x\neq y$.
		\end{enumerate}
		In this case, given $p>1$, we also have $J_p(x)\cap J_p(y) = \emptyset$ for $x\neq y$, where $J_p(x):= p\|x\|^{p-2}J(x)$ for $x\neq 0$ and $J_p(0) = \{0\}$.
	\end{lemma}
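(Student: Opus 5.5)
The plan is to prove the two implications separately, by elementary geometry of the unit sphere, using the characterization $z\in J(x)$ iff $\|z\|_*=\|x\|$ and $z(x)=\|x\|^2$. First the trivial observations: $J(0)=\{0\}$ and $0\notin J(x)$ for $x\neq 0$, so any intersection $J(x)\cap J(y)$ with $x\ne y$ forces $x,y\neq0$. Moreover, if $z\in J(x)\cap J(y)$, then $\|x\|=\|z\|_*=\|y\|=:r>0$.

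\emph{1.$\implies$2.} Suppose $x\neq y$ and $z\in J(x)\cap J(y)$. Rescale by $r$: set $u=x/r$, $v=y/r$, $w=z/r$, so that $\|u\|=\|v\|=\|w\|_*=1$, $u\ne v$, and $w(u)=w(v)=1$. Then $w\big(\tfrac{u+v}{2}\big)=1$, while $\big|w\big(\tfrac{u+v}2\big)\big|\le\|w\|_*\,\big\|\tfrac{u+v}2\big\|<1$ by strict convexity. This is a contradiction.

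\emph{2.$\implies$1.} Suppose the norm is not strictly convex. Then there are $u\neq v$ with $\|u\|=\|v\|=\|\tfrac{u+v}{2}\|=1$. By Hahn--Banach, pick $w\in\rmB^*$ with $\|w\|_*=1$ and $w(\tfrac{u+v}2)=1$. Since $w(u),w(v)\le1$ and their average is $1$, we get $w(u)=w(v)=1$. Hence $w\in J(u)\cap J(v)$, contradicting 2. Note that $J(x)\neq\emptyset$ for every $x$, also by Hahn--Banach, though this is not strictly needed here.

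\emph{The $J_p$ statement.} Let $x\ne y$ and suppose $z\in J_p(x)\cap J_p(y)$.
\begin{itemize}
\item If one of them, say $x$, is $0$, then $z=0$. But for $y\ne0$ every element of $J_p(y)$ has dual norm $p\|y\|^{p-1}>0$, which is impossible.
\item Otherwise $z=p\|x\|^{p-2}z_x=p\|y\|^{p-2}z_y$ with $z_x\in J(x)$ and $z_y\in J(y)$. Taking dual norms gives $p\|x\|^{p-1}=p\|y\|^{p-1}$, so $\|x\|=\|y\|$ because $p>1$ and $t\mapsto t^{p-1}$ is injective on $(0,\infty)$. Then $z_x=z_y\in J(x)\cap J(y)$, contradicting 2.
\end{itemize}

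The only point needing care is the Hahn--Banach step in 2.$\implies$1. Everything else is direct computation, so I do not expect a real obstacle; the $p>1$ hypothesis is used exactly to get injectivity of $t\mapsto t^{p-1}$.
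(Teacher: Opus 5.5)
Your proof is correct and follows the paper's argument essentially step by step. Both use the same three moves: normalize by the common norm $\|x\|=\|z\|_*=\|y\|$ and bound the pairing against the midpoint for 1.$\Rightarrow$2., take a Hahn--Banach norming functional of the midpoint for 2.$\Rightarrow$1., and compare dual norms to force $\|x\|=\|y\|$ in the $J_p$ claim. Your explicit handling of the case where one point is $0$ simply spells out what the paper calls trivial.
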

	
	\begin{proof}
		(1.$\Rightarrow$2.) By contradiction, assume there exists $x\neq y$ and $z \in J(x)\cap J(y)$. This means that $\|x\| = \|z\|_* = \|y\| \neq 0$. Then we have a contradiction
		\[\left\| \frac{x/\|x\|+y/\|y\|}{2} \right\| \geq\frac{1}{\|z\|_*^2} z\left( \frac{x+y}{2} \right) = \frac{1}{\|z\|_*^2}\frac{\|x\|^2+ \|y\|^2}{2}= 1. \]
		
		(2.$\Rightarrow$1.) Assume there exist $x,y \in \rmB$, $x\neq y$, $\|x\| = \| y\| = 1$ and $\|\frac{x+y}{2}\|=1$. By Hahn-Banach theorem there exists $z\in \rmB^*$ such that $\|z\| = 1$ and $z(\frac{x+y}{2}) = 1$. Then the trivial inequality
		\[1 = \frac{z(x) + z(y)}{2} \leq \frac{\|x\| + \|y\|}{2} = 1,\]
		implies $z(x) = z(y) = 1$, and in particular $z\in J(x) \cap J(y)$, yielding a contradiction.
		
		The last statement easily follows: assume $x\neq 0$ and $y\neq 0$, otherwise the result is trivial. If $w \in J_p(x)\cap J_p(y)$, there exist $z\in J(x)$ and $z'\in J(y)$ such that $w = p\|x\|^{p-2} z = p\|y\|^{p-2}z'$. By definition of $J$, it immediately follows that $\|x\| = \|y\|$, and then $ z = z'$, having a contradiction.
	\end{proof}
	
	\begin{remark}\label{rem: gsubdiff}
		Recall that $J_p(x)$ coincides with the Gateaux subdifferential of the convex map $x\mapsto \|x\|^p$ for $p>1$.
	\end{remark}

	\begin{theorem}\label{thm: Monge Banach}
		Let $p>1$ and assume that $(\rmB,\|\cdot\|)$ is a separable Banach space with strictly convex norm. Assume $\mu_1 \in \PP^{\mathsf{gr}}_p(\rmB)$ and $\mu_2 \in \PP_p(\rmB)$. Then, there exists a unique optimal transport plan $\pi\in \Gamma_{o,p}(\mu_1,\mu_2)$ and it is deterministic, that is 
		\begin{equation}
			\text{there exists }\rmt\colon \rmB \to \rmB \text{ measurable such that }\pi = (\operatorname{id},\rmt)_\sharp \mu_1.
		\end{equation}
	\end{theorem}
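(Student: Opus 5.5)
We follow the classical Brenier--Gangbo--McCann scheme, with Theorem \ref{thm: G-diff Lip} replacing Rademacher's theorem. Fix $\pi\in\Gamma_{o,p}(\mu_1,\mu_2)$, which exists by Theorem \ref{thm:OT_C}. Take $\cost(x,y)=\|x-y\|^p$ and $\tta_i(x)=2^{p-1}\|x\|^p$. Theorem \ref{thm:char opt plans} then gives a $\cost$-concave $\phi\in L^1(\mu_1)$ with $\phi^{\cost}\in L^1(\mu_2)$, such that $\pi$ is concentrated on $\partial^+_{\cost}\phi$.

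\emph{Step 1: local Lipschitz regularity of $\phi$.} We first restrict $\phi$ to a good region. Set
\[
\phi_k(x):=\inf_{y\in \operatorname{supp}\pi(\cdot)\,\cap \bar B_k}\big(\|x-y\|^p-\phi^{\cost}(y)\big),
\]
where the infimum runs over the $y$ in the projection of $\Gamma\cap(\rmB\times\bar B_k)$ and $\Gamma\subset\partial^+_\cost\phi$ is a set of full $\pi$-measure. Each $y\mapsto\|x-y\|^p$ with $\|y\|\le k$ is Lipschitz in $x$ on bounded sets, with a constant uniform in $y$. Hence $\phi_k$ is an infimum of uniformly locally Lipschitz functions, and it is locally Lipschitz on the open set $\{\phi_k>-\infty\}$. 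The points of interest lie in this set, because $\phi_k(x)\ge\phi(x)>-\infty$ on $\operatorname{dom}\phi$. If $(x,y)\in\Gamma$ with $\|y\|\le k$, then $\phi_k(x)=\phi(x)$ and $y$ realises the infimum. By Theorem \ref{thm: G-diff Lip}, each $\phi_k$ is Gateaux differentiable outside a Gaussian null set $G_k$, where we include in $G_k$ the possibly non-open part of the domain issue. Since $\mu_1$ is Gaussian regular, $\mu_1(\bigcup_k G_k)=0$.

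\emph{Step 2: first-order condition.} Let $(x,y)\in\Gamma$ with $\|y\|\le k$ and $x\notin G_k$. The function $x'\mapsto\|x'-y\|^p-\phi_k(x')$ attains its minimum at $x'=x$. Taking directional derivatives in every direction $v$ gives
\[
\text{the Gateaux derivative } d\phi_k(x) \text{ is a Gateaux supergradient-type element of } \|\cdot-y\|^p \text{ at } x.
\]
Concretely, $\|x+tv-y\|^p-\|x-y\|^p\ge\phi_k(x+tv)-\phi_k(x)$. Dividing by $t>0$ and letting $t\to0$ yields $d\phi_k(x)(v)\le$ the one-sided derivative of the norm power in direction $v$. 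By convexity, this places $d\phi_k(x)$ in the subdifferential of $\|\cdot\|^p$ at $x-y$, which by Remark \ref{rem: gsubdiff} is $J_p(x-y)$. Lemma \ref{lemma: strict conv norm} says that the sets $J_p(w)$ are pairwise disjoint. Therefore $x-y$ is uniquely determined by $d\phi_k(x)$, so $y=x-(J_p)^{-1}(d\phi_k(x))$ is unique. This value is independent of $k$, since any two admissible $y$ fall into a common level $k$.

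\emph{Step 3: conclusion.} Step 2 shows that $\Gamma$ minus a $\pi$-null set is a graph over $\mu_1$-a.e. $x$. Lemma \ref{lemma: conc on graph} then gives $\pi=(\operatorname{id},\rmt)_\sharp\mu_1$ for a measurable $\rmt$. For uniqueness, take two optimal plans $\pi,\pi'$. Their average $\frac12(\pi+\pi')$ is also optimal, hence deterministic by the argument above. A convex combination of two graph measures over the same $\mu_1$ is itself a graph measure only if the two maps agree $\mu_1$-a.e., so $\pi=\pi'$.

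The main obstacle is Step 1: we must make sure $\phi_k$ is locally Lipschitz on an \emph{open} set that contains $\mu_1$-almost all relevant points, so that Theorem \ref{thm: G-diff Lip} applies. If $\{\phi_k>-\infty\}$ is not open, the fallback is to note that $\phi_k$ is finite wherever the infimum is taken over a nonempty set, because $\phi^{\cost}$ is bounded above on that set. Finiteness at one point then propagates, thanks to the uniform local Lipschitz bound, so $\phi_k$ is finite everywhere once it is finite somewhere, and hence locally Lipschitz on all of $\rmB$.
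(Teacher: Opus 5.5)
Your proposal is correct and shares its core with the paper's proof. A locally Lipschitz modification of the Kantorovich potential is Gateaux differentiable off a Gaussian null set (Theorem \ref{thm: G-diff Lip}). Minimality of $\|\cdot-y\|^p-\phi_k$ at $x$ gives $D_G\phi_k(x)\in J_p(x-y)$ (Remark \ref{rem: gsubdiff}), and Lemma \ref{lemma: strict conv norm} makes $y$ unique. Uniqueness of $\pi$ then follows by averaging two optimal plans.

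The real difference is how you handle $\mu_2$ with unbounded support.
\begin{itemize}
\item The paper first assumes $\mu_2$ is concentrated on a ball $B_R$. It replaces $\phi$ by $\varphi=\inf_{\|x_2\|\le R}\big(\|\cdot-x_2\|^p-\phi^{\cost}(x_2)\big)$ and checks that $\varphi$ is still an optimal potential. For general $\mu_2$ it restricts $\pi$ to $\rmB\times B_n$ and uses that the renormalized restrictions are optimal. It then invokes the already proven uniqueness to get consistent maps $\rmt_n$, and glues their graphs through a $\limsup$ construction.
\item You localize the potential rather than the plan. Because $\phi_k$ is an infimum over $\operatorname{pr}^2\big(\Gamma\cap(\rmB\times\bar B_k)\big)$ and $\phi_k\ge\phi^{\cost\cost}\ge\phi$, the first-order condition holds at every $(x,y)\in\Gamma$ with $\|y\|\le k$. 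A single countable union $\bigcup_k G_k$ serves all levels. Two points $y,y'$ of a fibre are compared inside one $\phi_k$ with $k\ge\max(\|y\|,\|y'\|)$.
\end{itemize}
Your route avoids verifying optimality of the modified potential, restricting and renormalizing $\pi$, and the consistency-and-gluing step, so uniqueness is needed only at the end. The paper's two-stage scheme instead isolates a bounded-support statement. That scheme is also the template it reuses for random measures in Steps 1 and 5 of Theorem \ref{thm: BreRM}.

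When writing it up, fix three things:
\begin{itemize}
\item Drop the ``$\operatorname{supp}\pi(\cdot)$'' description of the index set, which does not make sense.
\item Take $\Gamma$ Borel so that Lemma \ref{lemma: conc on graph} applies.
\item Make your ``fallback'' the actual argument in Step 1. For any $x_0\in\operatorname{dom}\phi$ and $\|y\|\le k$ one has $\phi^{\cost}(y)\le(\|x_0\|+k)^p-\phi(x_0)$. Hence $\phi_k$ is real-valued, and so locally Lipschitz on all of $\rmB$, whenever its index set is nonempty. When the index set is empty, level $k$ plays no role.
\end{itemize}
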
	
	
	\begin{proof}
		Assume first that $\mu_2$ is supported on $B_R := \{\|x\|\leq R\}$ for some $R>0$. Let $\pi\in \Gamma_{o,p}(\mu_1,\mu_2)$ and define $\ttc(x_1,x_2):= \|x_1-x_2\|^p$. Consider an associated optimal $\ttc$-concave potential $\phi\colon\rmB \to [-\infty,+\infty)$ and define 
		\[\varphi(x_1):= \inf_{\|x_2\|\leq R} \|x_1-x_2\|^p - \phi^\ttc(x_2).\]
		We first show that $\varphi$ is locally Lipschitz. Let $x_1,x_1'\in \rmB$ with $\|x_1\|, \|x_1'\|\leq L$. For all $\varepsilon>0$ there exists $x_{2}'\in B_R$ such that $\|x_1'-x_{2}'\|^p - \phi^{\ttc}(x_{2}') \leq  \varphi(x_1') +\varepsilon$ . Then
		\begin{align*}
			\varphi(x_1) - \varphi(x_1') & \leq \varphi(x_1) -  \left(\|x_1'-x_{2}'\|^p - \phi^{\ttc}(x_{2}') - \varepsilon \right)
			\\
			& =
			\|x_1-x_{2}'\|^p - \|x_1'-x_{2}'\|^p +\varepsilon \leq \kappa (p,L,R) \|x_1 - x_1'\| +\varepsilon,
		\end{align*}
		where we used the Lipschitz continuity of $f(r):= |r|^p$ in $[-(L+R),(L+R)]$, with constant $\kappa(p,L,R) = p(L+R)^{p-1}$. By arbitrariness of $\varepsilon$ and switching the role of $x_1$ and $x_1'$, we have the local Lipschitz continuity of $\varphi$. 
		
		Moreover, $\varphi$ is another optimal Kantorovich potential associated to $\pi$, which coincides with $\phi$ $\mu_1$-a.e. Indeed, it is the $\ttc$-transform of $\psi \colon \rmB \to [-\infty,+\infty)$, $\psi(x_2):= \phi^{\ttc}(x_2) - \chi_{B_R}$, where $\chi_{B_R}$ is $0$ in $B_R$ and $+\infty$ outside, and since $\varphi \geq \phi$, we have
		\[\int \ttc d\pi = \int \phi d\mu_1 + \int \phi^{\ttc}d\mu_2 \leq \int \varphi d\mu_1 + \int \psi d\mu_2 \leq \int \ttc d\pi.\]
		Then, $\pi$ is concentrated on $\partial^+_{\ttc}\varphi$. Now, the goal is to show that $\partial_{\ttc}^+\varphi(x_1)$ is a singleton for $\mu_1$-a.e. $x_1 \in \rmB$. By definition of $\ttc$-superdifferential, $x_2\in \partial_{\ttc}^+ \varphi(x_1)$ if and only if $\|\cdot - x_2\|^p - \varphi(\cdot)$ is minimal at $x_1$, which means that $0\in \partial_{G}\big(\|\cdot - x_2\|^p - \varphi(\cdot)  \big)(x_1)$. Thanks to Remark \ref{rem: gsubdiff} and the fact that $\varphi$ is $\mu_1$-a.e. differentiable, we have 
		\begin{equation}\label{eq: gat}
			J_p(x_1-x_2) - D_G\varphi(x_1) \ni 0 \quad \text{ for }\mu_1\text{-a.e. }x_1\in \rmB.
		\end{equation}
		
		Thus, consider the set $E\subset \rmB$ in which $\varphi$ is Gateaux differentiable, which satisfies $\mu_1(\rmB\setminus E) = 0$. If $x_1\in E$ and $x_2,x_2' \in \partial_{\cost}^+ \varphi(x_1)$, then $D_G\varphi(x_1) \in J_p(x_1 - x_2) \cap J_p(x_1 - x_2')$, and Lemma \ref{lemma: strict conv norm} implies $x_2 = x_2'$. 
		
		This shows that $\partial_{\ttc}^+\varphi(x_1)$ is a singleton for $\mu_1$-a.e. $x_1$. Since $\pi$ is concentrated on it, there exists a Borel measurable map $\rmt:\rmB \to \rmB$ such that $\pi = (\operatorname{id},\rmt)_\sharp \mu_1$ (see Lemma \ref{lemma: conc on graph}). We just proved that any optimal coupling $\pi$ is induced by a map. Following a classical disintegration argument, this also implies uniqueness of the optimal coupling.
		
		For the general case in which we do not have a control on the support of $\mu_2$, let $\pi \in \Gamma_{o,p}(\mu_1,\mu_2)$ and define 
		\[\pi_n:= \pi|_{\rmB\times B_n}.\]
		Its first marginal is absolutely continuous with respect to $\mu_1$, and its second marginal has support in $B_n$. Since $\pi_n\leq \pi$, any $\pi_n$ is optimal (up to renormalization) because of Theorem \ref{thm:char opt plans}, and from the previous argument is induced by a map $\rmt_n:\rmB \to B_n$. By uniqueness and the fact that $\pi_n\leq \pi_{n'}$ if $n<n'$, we have $\rmt_n(x_1) = \rmt_{n'}(x_1)$ for $p^1_\sharp \pi_n$-a.e. $x_1\in \rmB$. Then, define the measurable sets
		\[\tilde\Gamma_n:=\big\{ (x_1,x_2)\in \rmB\times \rmB : x_2 = \rmt_n(x_1)\big\}, \quad \Gamma_n:= \bigcup_{n'\geq n} \tilde\Gamma_{n'}, \quad \Gamma = \bigcap_{n\in \N} \Gamma_n.\]
		Thanks to the previous argument, $\pi_k$ is concentrated on $\tilde\Gamma_k$ for all $k\in \N$,
		so that for all $n\in \N$
		\[\pi(\Gamma_n) = \lim_{k\to+\infty} \pi_k(\Gamma_n) = \lim_{k\to+\infty}\pi_k(\tilde\Gamma_k) = \lim_{k\to+\infty}\pi_k(\rmB \times B_k) = \lim_{k\to+\infty}\mu_2(B_k)= 1,\]
		and in particular $\pi(\Gamma)=1$.
		Moreover, for all $k\in \N$ it holds
		\[
		\begin{aligned}
			\operatorname{pr}^1_\sharp \pi_k & \Big(\big\{ x_1 : \exists x_2\neq x_2' \text{ s.t. } (x_1,x_2), (x_1,x_2')\in \Gamma \big\}\Big) 
			\\
			& = \operatorname{pr}^1_\sharp \pi_k\Big(\big\{ x_1: \forall k' \ \exists n,n'\geq k' \text{ s.t. }\rmt_{n}(x_1) \neq \rmt_{n'}(x_1) \big\}\Big)
			\\
			& \leq \operatorname{pr}^1_\sharp \pi_k \Big(\big\{ x_1: \ \exists n,n'\geq k \text{ s.t. }\rmt_{n}(x_1) \neq \rmt_{n'}(x_1) \big\}\Big) = 0.
		\end{aligned}\]
		Since $\operatorname{pr}^1_\sharp \pi_k(A) \nearrow \mu_1(A)$ for all Borel set $A$, we can pass to the limit on the left hand side and obtain that for $\mu_1$-a.e. $x_1\in \rmB$ there exists a unique $x_2$ such that $(x_1,x_2)\in \Gamma$. Then, thanks to Lemma \ref{lemma: conc on graph}, $\pi$ is deterministic, and uniqueness follows by the same disintegration argument.		
	\end{proof}
	
	\begin{remark}\label{rem: infty norm}
		The same result does not hold if we do not assume strict convexity. Let $\rmB = \R^2$ and $\|\cdot\| = \|\cdot\|_\infty$. Let $\mu_1$ and $\mu_2$ be, respectively, the uniform distribution of the rectangle $[0,1]\times [0,2]$ and $[2,3]\times [0,2]$. Clearly, the constant map $(x,y) \mapsto (x+2,y)$ induces an optimal coupling for every $p\in [1,+\infty)$. However, also the map 
		\[(x,y)\mapsto\begin{cases}
			(x+2,y+1) & \text{ if }y\in[0,1)
			\\
			(x+2,y-1) & \text{ if }y\in[1,2]
		\end{cases}\]
		induces an optimal coupling, violating uniqueness.
	\end{remark}

	\subsubsection{Solutions to the strict Monge problem on Wasserstein space}
	First, we specialize the result of Section \ref{subsec: lions cont} to the case of $\PP_p(\rmB)$, where $(\rmB,\|\cdot\|)$ is a separable Banach space and $p\in(1,+\infty)$. In particular, consider $\cost(x_1,x_2):= \|x_1-x_2\|^p$, for which the function $\tta_p(x):= 2^{p-1}\|x\|^p$ is such that \eqref{eq:100} holds with $\tta_1 = \tta_2 = \tta_p$ (see also Examples \ref{ex: wass}, \ref{ex: wass 2} and \ref{ex: wass 3}). Then, the space in \eqref{eq:repr cont}, can be rewritten as the Bochner space
	\begin{equation}
		\cB:= L^p\big( \rmQ, \cF_\rmQ, \bbM; \rmB \big), \quad \|\Z\|_{\cB}^p := \int_{\rmQ} \|\Z(q)\|^p d\bbM(q).
	\end{equation}
	The advantage of this setting is that $\cB$ is itself a separable Banach space.
	As a direct consequence of Lemma \ref{lemma:key}, Claim 4. (see also \cite{CSS2}), we have that the map 
	\begin{equation}
		\iota\colon\big(\cB,\|\cdot\|\big) \to \big(\PP_p(\rmB),\sfw_p\big), \quad \iota(\Z):= \Z_\sharp \bbM,
	\end{equation}
	is surjective, $1$-Lipschitz and for all $\mu_1,\mu_2 \in \PP_p(\rmB)$ and $\Z_1 \in \iota^{-1}(\mu_1)$, there exists a sequence $\{\Z_{2,n}\}\subset \iota^{-1}(\mu_2)$ such that 
	\begin{equation}
		\|\Z_1-\Z_{2,n}\|_{\cB} \to \sfw_p(\mu_1,\mu_2) \quad \text{ as }n\to +\infty.
	\end{equation}
	In particular, a function $\phi\colon\PP_p(\rmB)\to \R$ is $L$-Lipschitz in $\{\mu :\sfw_p(\mu,\delta_0)< R\}$ if and only if the lifted function $\hat{\phi}:= \phi \circ \iota \colon \cB \to \R$ is $L$-Lipschitz in $\{\Z : \|Z\|< R\}$.

	\begin{lemma}\label{lemma: inherit} If $\rmB$ has strictly convex norm, then $\cB$ has strictly convex norm.
	\end{lemma}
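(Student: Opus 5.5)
To show that the Bochner norm of $\cB$ is strictly convex, I will check the defining property of Lemma \ref{lemma: strict conv norm} directly. Take $\Z,\W\in\cB$ with $\|\Z\|_\cB=\|\W\|_\cB=1$ and $\|\tfrac{\Z+\W}{2}\|_\cB=1$; the goal is to prove $\Z=\W$ $\bbM$-a.e. The argument combines pointwise strict convexity of $\|\cdot\|$ on $\rmB$ with strict convexity of $r\mapsto r^p$ on $[0,+\infty)$ for $p>1$.

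First I compare norms pointwise. By the triangle inequality,
\[\Big\|\tfrac{\Z(q)+\W(q)}{2}\Big\| \le \tfrac{\|\Z(q)\|+\|\W(q)\|}{2}.\]
Convexity of $r\mapsto r^p$ then gives
\[\Big(\tfrac{\|\Z(q)\|+\|\W(q)\|}{2}\Big)^p\le \tfrac{\|\Z(q)\|^p+\|\W(q)\|^p}{2}.\]
Integrating over $\rmQ$, the left-hand side has integral $1$ and the right-hand side has integral $1$. So the chain of two pointwise inequalities has equal integrals at both ends, and both inequalities must be equalities $\bbM$-a.e.

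Next I extract the conclusion from the two equalities. Since $p>1$, the map $r\mapsto r^p$ is strictly convex, so equality in the second inequality forces $\|\Z(q)\|=\|\W(q)\|=:r(q)$ for a.e. $q$. Equality in the first inequality then reads $\|\Z(q)+\W(q)\|=2r(q)$. Where $r(q)=0$ we get $\Z(q)=\W(q)=0$. Where $r(q)>0$, the normalized vectors $\Z(q)/r(q)$ and $\W(q)/r(q)$ have unit norm and midpoint of norm $1$, so strict convexity of $\|\cdot\|$ gives $\Z(q)=\W(q)$. Hence $\Z=\W$ in $\cB$.

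The step that needs care is justifying that the pointwise inequalities are equalities almost everywhere. This holds because the integrands are nonnegative, measurable and integrable (all functions lie in $L^p$), and a nonnegative function with zero integral vanishes a.e.; measurability of $q\mapsto\|\Z(q)\|$ is standard for Bochner measurable maps. I expect no real obstacle beyond this. The restriction $p>1$ is essential here: for $p=1$ the norm equality step fails, and indeed $L^1$ norms are not strictly convex.
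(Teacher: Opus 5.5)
Your proof is correct and follows the paper's argument essentially step by step. Both use the pointwise chain (triangle inequality, then convexity of $r\mapsto r^p$), integrate to force equality $\bbM$-a.e., and then invoke strict convexity of $r\mapsto r^p$ and of the norm on $\rmB$ to get $\Z=\W$ a.e. You are, if anything, slightly more explicit than the paper about the case $\|\Z(q)\|=0$ and the normalization step.
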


	\begin{proof}
		Let \(\Z_1,\Z_2\in \cB\) satisfy
		$\|\Z_1\|_\cB=\|\Z_2\|_\cB=1$ and $\left\|\frac{\Z_1+\Z_2}{2}\right\|_\cB=1.$
		For a.e. \(q\in\rmQ\), using the triangle inequality in \(\rmB\) and the strict convexity of \( r\mapsto r^p\), we have
		\[
		\left(\frac{\|\Z_1(q) + \Z_2(q)\|_\rmB}{2}\right)^p
		\leq \left( \frac{\|\Z_1(q)\|_\rmB + \|\Z_2(q)\|_\rmB}{2} \right)^p \leq 
		\frac{\|\Z_1(q)\|_\rmB^p+ \|\Z_2(q)\|_\rmB^p}{2}.
		\]
		Integrating with respect to $\bbM$, we obtain
		\[
		1 = \left\|\frac{\Z_1+\Z_2}{2}\right\|_\cB^p
		\leq
		\frac{\|\Z_1\|_\cB^p+\|\Z_2\|_\cB^p}{2}
		=1.
		\]
		Therefore equality holds throughout and, in particular, for $\bbM$-a.e. $q\in \rmQ$
		\[
		\left(\frac{\|\Z_1(q) + \Z_2(q)\|_\rmB}{2}\right)^p
		= \left( \frac{\|\Z_1(q)\|_\rmB + \|\Z_2(q)\|_\rmB}{2} \right)^p = 
		\frac{\|\Z_1(q)\|_\rmB^p+\|\Z_2(q)\|_\rmB^p}{2}.
		\]
		By strict convexity of \(t\mapsto t^p\), the equality in the triangle inequality and the strict convexity of $\|\cdot\|_\rmB$, we have $\Z_1(q) = \Z_2(q)$ for $\bbM$-a.e. $q\in \rmQ$, and in particular $\Z_1 = \Z_2$.
	\end{proof}

	\newcommand{\PPtr}{\PP^{\mathsf{tr}}_p}
	
	Let us now define the set of $p$-transport regular measures $\PPtr(\rmB) \subset \PP_p(\rmB)$ as
	\begin{equation}
		\PPtr(\rmB):= \left\{ \mu_1 \in \PP_p(\rmB) : \forall \mu_2 \in \PP_p(\rmB) \ \ \Gamma_{o,p}(\mu_1,\mu_2) \subset \PPdet(\rmB \times \rmB)\right\}.
	\end{equation}
	By a usual disintegration argument, already used in the proof of Theorem \ref{thm: Monge Banach}, such a set coincide with the set of measures $\mu_1 \in \PP_p(\rmB)$ such that for all $\mu_2 \in \PP_p(\rmB)$ there exists a unique optimal coupling, which is also deterministic. In particular, Theorem \ref{thm: Monge Banach} implies that $\PP_p^{\mathsf{gr}}(\rmB) \subseteq \PPtr(\rmB)$.
	\normalcolor

	Next, we exploit the space $\cB$ to transfer properties that are specific of Banach spaces to $\PP_p(\rmB)$. We use this paradigm to set the notions of Gaussian distributions and Gaussian null sets in $\PP_p(\rmB)$.

	\begin{definition}\label{def: gauss reg superreg}
		We say that:
		\begin{enumerate}
			\item $\ttM \in \PP_p(\PP_p(\rmB))$ is a non-degenerate Gaussian measure if there exists $\frg \in \PP_p(\cB)$ non-degenerate Gaussian such that $\ttM = \iota_\sharp \frg$. We will write $\ttM\in \PP^{\sfg}_p(\PP_p(\rmB))$;
			\item $\rmN\subset \PP_p(\rmB)$ is Gaussian null if $\iota^{-1}(\rmN)$ is Gaussian null in $\cB$. Equivalently, we could ask that $\ttM(\rmN) = 0$ for all $\ttM\in \PP_p^\sfg(\PP_p(\rmB))$;
			\item $\ttM\in \PP_p(\PP_p(\rmB))$ is Gaussian regular if $\ttM(\rmN) = 0$ for all Gaussian null sets $\rmN\subset \PP_p(\rmB)$. We will write $\ttM\in \PP^{\mathsf{gr}}_p(\PP_p(\rmB))$;
			\item $\ttM\in \PP_p(\PP_p(\rmB))$ is super Gaussian regular if $\ttM\in \PP^{\mathsf{gr}}(\PP_p(\rmB))$ and is concentrated on $\PP_p^{\mathsf{tr}}(\rmB)$.
			We will write $\ttM\in \PP^{\mathsf{sgr}}_p (\PP_p(\rmB))$.
		\end{enumerate} 
	\end{definition}
	
	A priori, the definition of Gaussian measures defined on $\PP_p(\rmB)$ (and of Gaussian null sets) may depend on the choice of the triplet $(\rmQ,\cF_\rmQ, \bbM)$ used to define $\cB$. However, since all atomless standard Borel spaces are isomorphic, arguing as in \cite[Section 6]{PS25convex}, it is not hard to prove that there is no dependence on such a choice.

	\begin{theorem}\label{thm: BreRM}
		Let $p\in(1,+\infty)$ and $(\rmB,\|\cdot\|)$ a strictly convex and separable Banach space. Let $\ttM_1,\ttM_2 \in \PP_p(\PP_p(\rmB))$ be such that $\ttM_1$ is super Gaussian regular. Then, there exists a unique optimal random coupling $\ttP \in \RGamma_{o,p}(\ttM_1,\ttM_2)$ and it is fully deterministic, i.e. it has the form \eqref{eq: repr fully} for some Borel measurable $\rmT\colon\rmB\times \PP_p(\rmB) \to \rmB$. In particular
		\begin{equation*}
			\sfW_p^p(\ttM_1,\ttM_2) = \int_{\PP_p(\rmB)} \int_{\rmB} \|x_1 - \rmT(x_1,\mu_1)\|_{}^p d\mu_1(x_1)d\ttM_1(\mu_1), \quad \cT_\sharp \ttM_1 = \ttM_2, \quad \cT(\mu_1) = \rmT(\cdot,\mu_1)_\sharp\mu_1. 
		\end{equation*}
	\end{theorem}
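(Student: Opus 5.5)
The proof splits into an outer step and an inner step. At the outer level ($\PP_p(\rmB)$) we show that every optimal coupling $\Pi\in\Gamma_{o,\C}(\ttM_1,\ttM_2)$ is induced by a map $\cT$. This is done by running the argument of Theorem \ref{thm: Monge Banach} in the lifted Bochner space $\cB$ and pushing the conclusion down through $\iota$. At the inner level, super Gaussian regularity gives transport regularity of $\ttM_1$-a.e. $\mu_1$, so each optimal inner coupling is unique and deterministic. Lemma \ref{lemma: fully det} together with Lemma \ref{lemma: rand coupl} then yields that the optimal random coupling is unique and fully deterministic.

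For the outer step, we fix an optimal random coupling $\ttP$, which exists by Lemma \ref{lemma: rand coupl}. By Theorem \ref{thm: char opt RGamma} there is a $\C$-concave $\phi_1$ such that $\ttP$ is concentrated on $\partial^+_{\ttt,\C}\phi_1$. We first treat the case where $\ttM_2$ is concentrated on $\{\sfw_p(\mu_2,\delta_0)\le R\}$. As in the proof of Theorem \ref{thm: Monge Banach}, we replace $\phi_1$ by $\varphi(\mu_1):=\inf_{\sfw_p(\mu_2,\delta_0)\le R}\sfw_p^p(\mu_1,\mu_2)-\phi_1^{\C}(\mu_2)$. This function is locally Lipschitz on $\PP_p(\rmB)$, is still an optimal potential, and $\ttP$ remains concentrated on $\partial^+_{\ttt,\C}\varphi$. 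Its lift $\hat\varphi=\varphi\circ\iota$ is locally Lipschitz on $\cB$, since Lipschitz constants transfer as noted before Lemma \ref{lemma: inherit}. By Theorem \ref{thm: G-diff Lip}, $\hat\varphi$ is Gateaux differentiable outside a Gaussian null set $G\subset\cB$. Because $\hat\varphi$ is law-invariant, $G$ can be taken law-invariant, namely $G=\iota^{-1}(\rmN)$ with $\rmN=\iota(G)$. Then $\rmN$ is Gaussian null in $\PP_p(\rmB)$, so $\ttM_1(\rmN)=0$ by Gaussian regularity. To get this we set $\rmN:=\{\mu: \hat\varphi \text{ is not Gateaux differentiable at some } \Z\in\iota^{-1}(\mu)\}$. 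Its preimage is exactly the non-differentiability set, which is Gaussian null; measurability of $\rmN$ is a minor point and is handled by universal measurability.

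Now take $\mu_1\notin\rmN$ and $\mu_2,\mu_2'\in\partial^+_\C\varphi(\mu_1)$. Transport regularity of $\mu_1$ gives optimal maps $\rmt,\rmt'$. We pick $\Z_1\in\iota^{-1}(\mu_1)$ and set $\Z_2=\rmt\circ\Z_1$ and $\Z_2'=\rmt'\circ\Z_1$. By Proposition \ref{prop: lift cont}(3), both $(\Z_1,\Z_2)$ and $(\Z_1,\Z_2')$ lie in $\partial^+_{\hat\C}\hat\varphi$, where $\hat\C(\Z_1,\Z_2)=\|\Z_1-\Z_2\|_\cB^p$. The first-order condition of Theorem \ref{thm: Monge Banach}, applied in $\cB$, gives $D_G\hat\varphi(\Z_1)\in J_p(\Z_1-\Z_2)\cap J_p(\Z_1-\Z_2')$. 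Since $\cB$ is strictly convex by Lemma \ref{lemma: inherit}, Lemma \ref{lemma: strict conv norm} forces $\Z_2=\Z_2'$, hence $\mu_2=\mu_2'$. So $\partial^+_\C\varphi(\mu_1)$ is a singleton for $\ttM_1$-a.e. $\mu_1$, and by Lemma \ref{lemma: conc on graph} (in the Polish space $\PP_p(\rmB)$) the coupling $\Pi=(\operatorname{pr}^1_\sharp,\operatorname{pr}^2_\sharp)_\sharp\ttP$ is induced by a Borel map $\cT$. The general case, with unbounded support of $\ttM_2$, follows by restricting $\Pi$ to $\PP_p(\rmB)\times\{\sfw_p(\cdot,\delta_0)\le n\}$ and repeating the exhaustion argument of Theorem \ref{thm: Monge Banach} verbatim. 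This yields uniqueness of the optimal $\Pi$ via the usual disintegration argument.

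For the conclusion, $\ttP$ is concentrated on optimal couplings $\pi$ with $(\operatorname{pr}^1_\sharp\pi,\operatorname{pr}^2_\sharp\pi)=(\mu_1,\cT(\mu_1))$ for $\ttM_1$-a.e. $\mu_1$. Transport regularity makes such $\pi$ unique and deterministic, namely $\pi=\mathcal V(\mu_1)$ with $\mathcal V(\mu_1)$ the unique element of $\Gamma_{o,p}(\mu_1,\cT(\mu_1))$. Hence $\ttP=\mathcal V_\sharp\ttM_1$ is concentrated on $\PPdet$ and $\operatorname{pr}^1_\sharp$ is $\ttP$-essentially injective. Lemma \ref{lemma: fully det} then gives $\rmT$, and uniqueness of $\ttP$ follows because $\cT$ and $\mathcal V$ are uniquely determined. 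The main obstacle is the transfer of Gaussian-null non-differentiability sets from $\cB$ to $\PP_p(\rmB)$, which is where law-invariance must be used carefully. The rest is a careful adaptation of Theorem \ref{thm: Monge Banach}.
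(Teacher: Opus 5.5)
Your overall route is the paper's: a locally Lipschitz outer potential lifted to $\cB$, Theorem \ref{thm: G-diff Lip} plus strict convexity of $\cB$ to make $\partial^+_{\C}\varphi(\mu_1)$ a singleton, then transport regularity and Lemma \ref{lemma: fully det}. However, the transfer of the exceptional set from $\cB$ to $\PP_p(\rmB)$ is wrong as written.

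You take $\rmN=\iota(G)$, the measures having \emph{some} lift at which $\hat{\varphi}$ is not Gateaux differentiable. You then assert that $\iota^{-1}(\rmN)=G$ because $\hat{\varphi}$ is law-invariant. That would require Gateaux differentiability of $\hat{\varphi}$ at $\Z$ to depend only on $\iota(\Z)$. This does not follow from law-invariance: it only lets you move differentiability along measure-preserving automorphisms of $\rmQ$, and two random variables with the same law need not be related by one. It is also false in general.

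For a counterexample, take $\mu_1,\mu_2\in\PP_2(\R^2)$ the two measures of the counterexample after Lemma \ref{lemma:key}. Write the lift of $\sfw_2^2(\cdot,\mu_2)$ as $\|\Z\|^2_{L^2}+\frac43-2\sup\{\langle \Z,Y\rangle_{L^2} : Y_\sharp\bbM=\mu_2\}$.
\begin{itemize}
\item At the lift $\Z_1(q)=(0,q)$ used there, the supremum is not attained and its only subgradient is $\Z_1$. So the lift is Gateaux differentiable at $\Z_1$.
\item At a lift of $\mu_1$ admitting an independent randomization, the supremum is attained at two maximizers differing in the sign of the first component. So the lift is not Gateaux differentiable there.
\end{itemize}
Hence $\iota^{-1}(\iota(G))$ can be strictly larger than $G$, and you have not shown it is Gaussian null.

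The repair is to reverse the quantifier, which is exactly the paper's Step 2. Your uniqueness argument only needs \emph{one} lift of $\mu_1$ at which $\hat{\varphi}$ is Gateaux differentiable. Read the other way, it shows the following: if $\mu_1\in\PP^{\mathsf{tr}}_p(\rmB)$ and $\partial^+_{\C}\varphi(\mu_1)$ contains $\mu_2\neq\mu_2'$, then for \emph{every} $\Z_1\in\iota^{-1}(\mu_1)$ the distinct points $\rmt\circ\Z_1$ and $\rmt'\circ\Z_1$ lie in $\partial^+_{\hat{\C}}\hat{\varphi}(\Z_1)$. So every lift of such a $\mu_1$ lies in $G$.

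Work instead with $\rmN':=\PP_p(\rmB)\setminus\iota(\cB\setminus G)$.
\begin{itemize}
\item $\rmN'$ is co-Souslin, and $\iota^{-1}(\rmN')\subset G$ trivially.
\item Hence $\ttM_1(\rmN')=0$ by Gaussian regularity, applied to a Borel subset of full measure.
\item For $\mu_1\notin\rmN'$ you can pick a lift $\Z_1\notin G$ and run your argument there.
\end{itemize}

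With this change the rest of your argument goes through, and it differs from the paper only in organization. You run the exhaustion at the level of $\Pi$, as in Theorem \ref{thm: Monge Banach}. You then deduce uniqueness of $\ttP$ from uniqueness of $\Pi$ and of the inner optimal plan, instead of the paper's mixture argument in Step 4 and restriction of $\ttP$ in Step 5; this is slightly cleaner. You should still justify that $\mathcal{V}(\mu_1)$, the unique element of $\Gamma_{o,p}(\mu_1,\cT(\mu_1))$, is $\ttM_1$-measurable. The paper does this in Step 3 via a Borel inverse of $(\operatorname{pr}^1_\sharp,\operatorname{pr}^2_\sharp)$ on a Borel set of optimal deterministic couplings (Lemmas \ref{lemma: meas diff det} and \ref{lemma: meas tr}).
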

	
	\begin{proof}
		\textbf{Step 1}: assume first that $\ttM_2$ is supported on a bounded set of $\PP_p(\rmB)$, say $\{\mu_2 \in \PP_p(\rmB) : \int_{\rmB} \|x_2\|^pd\mu_2(x_2) \leq R^p \}$ for some $R>0$. Let $\ttP \in \RGamma_{o,p}(\ttM_1,\ttM_2)$. Thanks to Lemma \ref{lemma: rand coupl}, $\ttP$ is concentrated on optimal couplings, that is for $\ttP$-a.e. $\pi \in \PP_p(\rmB\times \rmB)$ it holds $\pi \in \Gamma_{o,p}( \mu_1^\pi,\mu_2^\pi)$, where $\mu_i^\pi :=\operatorname{pr}^i_\sharp\pi$. Moreover, since $\operatorname{pr}^1_{\sharp\sharp}\ttP = \ttM_1$ is concentrated on $\PP^{\mathsf{tr}}_p(\rmB)$, $\ttP$-a.e. $\pi$ is in $\PPdet(\X_1\times \X_2)$; in particular there exists $\rmt^\pi \in L^p(\rmB, \mu_1^\pi;\rmB)$ such that $\pi = (\operatorname{id},t^\pi)_\sharp \mu_1^\pi$ and $\mu_2^\pi = \rmt^\pi_\sharp \mu_1^\pi$. 
		
		\textbf{Step 2}: we show that $\Pi:= \big(\operatorname{pr}^1_\sharp, \operatorname{pr}^2_\sharp \big)_\sharp \ttP \in \PPdet\big(\PP_p(\rmB)\times \PP_p(\rmB)\big)$.
		In particular, there exists $\cT\colon\PP_p(\rmB) \to \PP_p(\rmB)$ such that $\Pi = (\operatorname{id},\cT)_\sharp \ttM_1$. To this aim, we will rely on Proposition \ref{prop: lift cont}. 
		\\
		Thanks to Lemma \ref{lemma: rand coupl}, $\Pi\in \Gamma_{o,p}(\ttM_1,\ttM_2)$. Consider $\phi \colon \PP_p(\rmB) \to [-\infty,+\infty)$ an associated $\C$-concave optimal Kantorovich potential, where $\C = \sfw_p^p$. Arguing as in the proof of Theorem \ref{thm: Monge Banach}, up to substituting $\phi$ with $\varphi(\mu_1):=\inf\left\{ \sfw_p^p(\mu_1,\mu_2) - \phi^{\C}(\mu_2) : \sfw_p(\mu_2,\delta_0)\leq R \right\}$, we can assume that $\phi$ is locally Lipschitz. From the standard theory, see Theorem \ref{thm:char opt plans}, $\Pi$ is concentrated on $\partial^+_{\C}\phi$. We are done if we prove that for $\ttM_1$-a.e. $\mu_1 \in \PPtr(\rmB)$ the set $\partial^+_{\C}\phi$ is a singleton. Clearly, the latter is non empty for $\ttM_1$-a.e. $\mu_1$. So, assume there exist $\mu_2,\mu_2' \in \partial^+_{\C} \phi(\mu_1)$ and $\mu_1 \in \PP^{\mathsf{tr}}_p(\rmB)$. Then, 
		\begin{itemize}
			\item from Proposition \ref{prop: char tot superdiff}, if $\pi \in \Gamma_{o,p}(\mu_1,\mu_2)$ and $\pi'\in\Gamma_{o,p}(\mu_1,\mu_2')$, we have $\pi,\pi' \in \partial_{\ttt,\C}^+\phi(\mu_1)$;
			\item since $\mu_1 \in \PPtr(\rmB)$, there exists $\rmt^{\pi},\rmt^{\pi'}\in L^p(\rmB,\mu_1;\rmB)$ inducing, respectively, $\pi$ and $\pi'$. Then, thanks to Proposition \ref{prop: lift cont}, for all $Z_1\in \iota^{-1}(\mu_1)$ it holds $\rmt^\pi(\Z_1), \rmt^{\pi'}(\Z_1) \in \partial_{\hat{\C}}^+\hat{\phi}(\Z_1)$. Notice that, in this setting, $\hat{\C}(\Z_1,\Z_2) = \|\Z_1 - \Z_2 \|_{\cB}^p$.
		\end{itemize}
		Since $\mu_2\neq \mu_2'$, this yields $\partial_{\hat{\C}}^+\hat{\phi}(\Z_1)$ is not a singleton for every $Z_1 \in \iota^{-1}(\mu_1)$, and in particular any point in $\iota^{-1}(\mu_1)$ is a point of non-Gateaux differentiability of $\hat{\phi}$, since in the proof of Theorem \ref{thm: Monge Banach} we have already observed that the superdifferential is a singleton at the points where the potential is Gateaux differentiable: this is because $\cB$ inherits separability and strict convexity from $\rmB$ thanks to Lemma \ref{lemma: inherit}. Now, the local Lipschitz continuity of $\phi$ implies the local Lipschitz continuity of $\hat{\phi}$, and this means that there exists a Gaussian null set $N\subset \cB$ such that $\hat{\phi}$ fails to be Gateaux differentiable only on $N$. In particular, $\iota^{-1}(\{\mu_1 \in \PP^{\mathsf{tr}}_p(\rmB) : |\partial_{\C}^+\phi(\mu_1)|>1\}) \subset N$, so that by Gaussian regularity of $\ttM_1$, we have that for $\ttM_1$-a.e. $\mu_1$, the set $\partial_{\C}^+\phi(\mu_1)$ is a singleton.
		
		
		\textbf{Step 3}: denote with $\widetilde{\mathcal{A}} \subset \PP_p(\rmB\times \rmB)$ the set of couplings
		\[\widetilde{\mathcal{A}} := \left\{ \pi \in \PP_p(\rmB\times \rmB) : \operatorname{pr}^1_\sharp \pi \in \PPtr(\rmB), \ \pi \in \Gamma_{o,p}(\operatorname{pr}^1_\sharp \pi , \operatorname{pr}^2_\sharp \pi) \cap \PPdet(\rmB \times \rmB)\right\},\]
		which is $\ttP$-measurable thanks to Lemmas \ref{lemma: meas diff det} and \ref{lemma: meas tr}. From Step 1, we know that $\ttP(\widetilde{\mathcal{A}}) = 1$. Moreover, by inner regularity of $\ttP$ there exists $\mathcal{A}\subset \widetilde{\mathcal{A}}$ that is Borel measurable and $\ttP(\mathcal{A}) = 1$. Moreover, the continuous map $(\operatorname{pr}^1_\sharp, \operatorname{pr}^2_\sharp)$ is bijective from $\widetilde{\mathcal{A}}$ to $\PPtr(\rmB) \times \PP_p(\rmB)$, and thus its inverse, that we denote $G$, is Borel measurable from $(\operatorname{pr}^1_\sharp,\operatorname{pr}^2_\sharp)(\mathcal{A})$ to $\mathcal{A}$, for example thanks to \cite[Lemma 6.7.1]{Bogachev07}. Thus, we have that $\ttP = G_\sharp \Pi$, and from Step 2 we then have that $\ttP = \mathcal{V}_\sharp \ttM_1$, where $\mathcal{V}(\mu_1) := G(\mu_1,\cT(\mu_1))$ for all $\mu_1 \in \PPtr(\rmB)$.

		\normalcolor
		
		\textbf{Step 4}: thanks to Lemma \ref{lemma: fully det}, we proved that for all $\ttP, \ttP' \in \RGamma_{o,p}(\ttM_1,\ttM_2)$ there exist $\rmT, \rmT': \rmB\times \PP_p(\rmB)\to \rmB$, defined $\overline{\ttM}_1$-a.e., representing, respectively, $\ttP$ and $\ttP'$ as in \eqref{eq: repr fully}. Then 
		\[\ttP'':= \frac12\ttP + \frac12\ttP'\] 
		is still optimal between $\ttM_1$ and $\ttM_2$, so that there exists $\rmT''\in L^p(\rmB \times \PP_p(\rmB),\overline{\ttM}_1;\rmB)$ representing it. Then, \eqref{eq: repr fully overline} can be written as 
		\[
		\begin{aligned}
			\ttP''= & \int_{\X_1\times \PP(\X_1)} \delta_{\big((x_1,\rmT''(x_1,\mu_1)), (\operatorname{id},\rmT''(\cdot,\mu_1))_\sharp\mu_1\big)} d\overline{\ttM}_1(x_1,\mu_1)
			\\
			= & \int_{\X_1\times \PP(\X_1)} \frac12\delta_{\big((x_1,\rmT(x_1,\mu_1)), (\operatorname{id},\rmT(\cdot,\mu_1))_\sharp\mu_1\big)} + \frac12\delta_{\big((x_1,\rmT{'}(x_1,\mu_1)), (\operatorname{id},\rmT'(\cdot,\mu_1))_\sharp\mu_1\big)}d\overline{\ttM}_1(x_1,\mu_1).
		\end{aligned}
		\]
		The uniqueness of disintegration with respect to the map $(\operatorname{pr}^1,\operatorname{pr}^1_\sharp)\colon \rmB\times \rmB \times \PP(\rmB \times \rmB) \to \rmB \times \PP(\rmB)$ yields that $\rmT = \rmT'= \rmT''$ $\overline{\ttM}_1$-a.e., and in particular $\ttP=\ttP'$.
		
		\textbf{Step 5}: if we do not have the additional hypothesis on $\ttM_2$, let $\ttP\in \RGamma_{o,p}(\ttM_1,\ttM_2)$ and define 
		\[\ttP_n := \frac{1}{\ttP(A_n)} \ttP|_{A_n}, \ \ A_n:= \left\{\pi : \int \|x_2\|^p d\pi(x_1,x_2)\leq n^p \right\}, \ \ \Pi_n:=\big(\operatorname{pr}^1_\sharp,\operatorname{pr}^2_\sharp\big)_\sharp \ttP_n, \ \ \ttM_{n,i}:= \operatorname{pr}^i_{\sharp\sharp} \ttP_{n}.\]
		Now, $\ttP_n \in \RGamma_{o,p}(\ttM_{n,1},\ttM_{n,2})$ and $\ttM_{n,2}$ has bounded support, so that, from the previous steps and Lemma \ref{lemma: fully det}, there exists $\rmT_n:\rmB\times \PP_p(\rmB)\to \rmB$ representing it. In particular:
		\begin{enumerate}
			\item $\ttP$ is concentrated on deterministic couplings \[\ttP(\PPdet(\X_1\times \X_2)) = \lim_{n\to+\infty} \ttP(\PPdet( \X_1\times \X_2)\cap A_n) = \lim_{n\to+\infty}\ttP_n(\PPdet(\X_1\times \X_2)) = 1; \]
			\item $\Pi_n$ is deterministic, and arguing as in the proof of Theorem \ref{thm: Monge Banach}, we obtain that $\Pi:= \big(\operatorname{pr}^1_\sharp,\operatorname{pr}^2_\sharp\big)_\sharp \ttP$ is deterministic, i.e. there exists $\cT:\PP_p(\rmB) \to \PP_p(\rmB)$ Borel measurable such that $\Pi = (\operatorname{id},\cT)_\sharp \ttM_1$;
			\item arguing as in Step 3, we have also that $\operatorname{pr}^1_\sharp$ is $\ttP$-essentially injective.
			\normalcolor
		\end{enumerate}
		
		This proves that $\ttP$ is fully deterministic, thanks again to Lemma \ref{lemma: fully det}, and by a disintegration argument as in Step 4 we also achieve uniqueness.
	\end{proof} 
	
	\begin{remark}\label{rem: R}
		If $\rmB = \R$, one could also require that $\ttM_1 \in \PP_p^{\mathsf{gr}}(\PP_p(\rmB))$ and is concentrated on $\PPdiff_p(\rmB)$, since the latter is enough to have that the unique optimal coupling between $(\mu_1,\mu_2)$ is induced by a map for $\ttM_1$-a.e. $\mu_1$ and any $\mu_2\in \PP_p(\rmB)$.
	\end{remark}
	
	\begin{proposition}
		If there exists $\ttM\in \PP^{\mathsf{sgr}}_p(\PP_p(\rmB))$ with full support, then $\PP^{\mathsf{sgr}}_p(\PP_p(\rmB))$ is dense in $\PP_p(\PP_p(\rmB))$. In particular, if $\rmB$ is finite dimensional, the previous conclusion holds.
	\end{proposition}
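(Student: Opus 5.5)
The plan is to prove density by a convolution-type mixing argument, using the fact that the class $\PP^{\mathsf{sgr}}_p(\PP_p(\rmB))$ is stable under two operations: passing to measures absolutely continuous with respect to a super Gaussian regular measure, and taking convex combinations. Both are immediate from Definition \ref{def: gauss reg superreg}. Gaussian null sets form a fixed family, so $\ttM(\rmN)=0$ passes to any $\ttM'\ll\ttM$ and to convex combinations. Concentration on $\PPtr(\rmB)$ is likewise inherited by absolutely continuous measures and by mixtures.

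\textbf{Step 1.} Fix $\ttM\in\PP^{\mathsf{sgr}}_p(\PP_p(\rmB))$ with full support and a target $\ttN\in\PP_p(\PP_p(\rmB))$. Since finitely supported measures $\sum_{k} c_k\delta_{\nu_k}$ are dense in $(\PP_p(\PP_p(\rmB)),\sfW_p)$, it suffices to approximate each $\delta_{\nu_k}$. For $r>0$, set
\[
\ttM^{k,r}:=\frac{1}{\ttM(B_r(\nu_k))}\,\ttM\mres B_r(\nu_k),
\]
where $B_r(\nu_k)$ is the $\sfw_p$-ball. This is well defined because full support gives $\ttM(B_r(\nu_k))>0$. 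It lies in $\PP^{\mathsf{sgr}}_p$ by the stability remark above, and it satisfies $\sfW_p(\ttM^{k,r},\delta_{\nu_k})\le r$, since every measure in its support is within $\sfw_p$-distance $r$ of $\nu_k$. Then $\sum_k c_k\ttM^{k,r}\in\PP^{\mathsf{sgr}}_p$, and by joint convexity of $\sfW_p^p$ along mixtures (glue the couplings) its distance to $\sum_k c_k\delta_{\nu_k}$ is at most $r$. A diagonal argument then gives density.

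\textbf{Step 2 (finite dimension).} Here we must exhibit one $\ttM\in\PP^{\mathsf{sgr}}_p$ with full support. Take $\frg$ a non-degenerate Gaussian on $\cB$ that has full support in $\cB$; such Gaussians exist on separable Banach spaces, for instance $\sum_j \lambda_j\xi_j e_j$ with a dense sequence $e_j$, summable $\lambda_j>0$, and i.i.d.\ standard normals $\xi_j$. Set $\ttM:=\iota_\sharp\frg$.

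The Gaussian regularity of $\ttM$ needs checking. A Gaussian null $\rmN$ has $\iota^{-1}(\rmN)$ Gaussian null in $\cB$, hence $\frg(\iota^{-1}(\rmN))=0$, and by definition this is $\ttM(\rmN)=0$.

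Full support follows because $\iota$ is continuous and surjective: for any open $U\subset\PP_p(\rmB)$, the preimage $\iota^{-1}(U)$ is open and nonempty, so it has positive $\frg$-measure.

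Concentration on $\PPtr(\rmB)$ is the main obstacle. Since $\rmB=\R^d$ and Theorem \ref{thm: Monge Banach} gives $\PP^{\mathsf{gr}}_p(\rmB)\subseteq\PPtr(\rmB)$, it suffices to show that $\frg$-a.e.\ $\Z$ has law absolutely continuous with respect to Lebesgue measure. First try choosing the Gaussian adapted to the Lions lift. Take $\rmQ=[0,1]$ and let $\frg$ be the law of $\Z(q)=\sum_j\lambda_j\xi_j e_j(q)$, where the coordinates of the $e_j$ include a system that makes the random function $q\mapsto\Z(q)$ have a.s.\ absolutely continuous pushforward. A candidate is to include a deterministic-shape component that already has a.c.\ law, such as $e_0(q)$ with $(e_0)_\sharp\mathcal L=$ uniform on a cube, plus a smooth random perturbation. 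The a.s.\ a.c.\ property of $\Z_\sharp\mathcal L$ would then follow from an a.s.\ nondegenerate Jacobian, via the area formula on a piecewise smooth realization.

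If this is delicate, fall back on the stability remark and the fact that the set of $\Z\in\cB$ whose law is absolutely continuous is not Gaussian null. The alternative is then to restrict $\frg$ to that set (when it has positive measure) and re-spread it to full support by mixing translates $\frg(\cdot-h_m)$ with a dense sequence $(h_m)$. Translates of non-degenerate Gaussians remain non-degenerate Gaussians, so they do not charge Gaussian null sets.
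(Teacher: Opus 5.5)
Your Step 1 is correct and is essentially the paper's argument: the paper's $\ttM_r$ is exactly your mixture $\sum_k c_k\ttM^{k,r}$ on disjoint balls. In Step 2, your checks that $\iota_\sharp\frg$ is Gaussian regular and has full support are also fine.

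The gap is the one you flag yourself: concentration on transport regular measures. This is the whole content of the finite-dimensional claim. The paper does not build it from scratch; it imports it from \cite[Theorem 6.25]{PS25convex}. There one takes $\rmQ=[0,1]^d$ and a non-degenerate Gaussian on $C^1([0,1]^d;\R^d)$. Through the continuous dense embedding into $L^p([0,1]^d;\R^d)$, this is a non-degenerate Gaussian on $\cB$ with full support, and almost every realization has a law absolutely continuous with respect to $\mathcal{L}^d$. Your proposal offers only a ``candidate'' and a ``fallback'', and neither works as written.

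\emph{(i) Wrong parameter space.} With $\rmQ=[0,1]$ and $d\ge 2$, any (piecewise) $C^1$ map $q\mapsto Z(q)$ has image of zero $\mathcal{L}^d$-measure. Its law is therefore singular, so no area-formula or Jacobian argument is available. A map $e_0$ pushing $\mathcal{L}^1$ to the uniform law on a cube is necessarily non-smooth, and adding a perturbation smooth in $q$ gives nothing to which the area formula applies. You need $\rmQ=[0,1]^d$.

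\emph{(ii) The Jacobian statement is unproved and misformulated.} Even with $\rmQ=[0,1]^d$, ``a.s.\ nondegenerate Jacobian'' is the actual statement to prove. Read as ``nonvanishing everywhere'', it is false for a full-support Gaussian field, since the Jacobian determinant changes sign with positive probability. What is needed is $\mathcal{L}^d(\{\det DZ=0\})=0$ almost surely. This follows from Fubini once $DZ(q)$ has a non-degenerate Gaussian law on $\R^{d\times d}$ for each $q$. That holds for a non-degenerate Gaussian on $C^1$, because $f\mapsto \operatorname{tr}(a^{\top}Df(q))$ is a nonzero continuous functional for every $a\neq 0$. Combining this with the area formula gives $Z_\sharp\mathcal{L}^d\ll\mathcal{L}^d$.

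\emph{(iii) The fallback fails, for three reasons.}
First, the claim that $S:=\{Z:\ Z_\sharp\bbM\ll\mathcal{L}^d\}$ is not Gaussian null is asserted, not proved; it is essentially what you are trying to establish.
Second, $\frg(S)>0$ does not give full support of $\frg\mres S$.
Third, translation destroys concentration on $S$. Take $d=1$, $\rmQ=[0,1]$, $Z(q)=q$ and $h(q)=-q$. Then $Z$ has uniform law, but $Z+h\equiv 0$ has law $\delta_0$. Since its only coupling with a non-Dirac $\mu_2$ is not deterministic, $\delta_0\notin\PP^{\mathsf{tr}}_p(\rmB)$. So a mixture of translates of $\frg\mres S$ need not be concentrated on transport regular measures.
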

	
	\begin{proof}
		Directly from the definition, it is immediate to verify that $\ttM'\ll \ttM$ implies $\ttM'\in \PP_p^{\mathsf{sgr}}(\PP_p(\rmB))$, so that it is enough to show $\{\ttM' \in \PP_p(\PP_p(\rmB)) : \ttM'\ll\ttM\}$ is dense in $\PP_p(\PP_p(\rmB))$. Since the collection of discrete random measures \[\left\{ \sum_{i=1}^n a_i \delta_{\mu_i} : n\geq 1, a_i>0, \sum_{i=1}^n a_i = 1, \mu_i\in \PP_p(\rmB)\right\}\] 
		is dense, is enough to show that any random measure of such a form can be approximated. So, let $\ttN = \sum_{i=1}^n a_i \delta_{\mu_i}$ with $\mu_i\neq \mu_{i'}$ if $i\neq i'$, and let $r>0$ be such that the $\sfw_p$-balls $\{B(\mu_i,r): i\leq n\}$ are disjoint. Define 
		\[\ttM_r:= \left( \sum_{i=1}^n \frac{a_i}{\ttM(B(\mu_i,r))} \mathds{1}_{B(\mu_i,r)}\right) \ttM \ll \ttM,\]
		which clearly converges to $\ttN$ as $r\to0$.
		
		If $\rmB$ is finite dimensional, say $\rmB = \R^d$ endowed with a norm $\|\cdot\|$, \cite[Theorem 6.25]{PS25convex} shows that taking any non-degenerate Gaussian $\frg \in \PP\big( C^1([0,1]^d,\R^d) \big)$, interpreting $C^1([0,1]^d,\R^d)$ as a subset of $\rmB = L^p\big([0,1]^d, \cF_{[0,1]^d}, \lambda^d; \R^d \big)$, then $\frg\in \PP^{\sfg}_p(\cB)$ and the density of $C^1([0,1]^d,\R^d)$ in $\cB$ gives that $\ttG:=\iota_\sharp \frg \in \PP_p(\PP_p(\R^d))$ is Gaussian and with full support. Relying again on \cite[Theorem 6.25]{PS25convex}, $\ttG$-a.e. $\mu$ is absolutely continuous with respect to the Lebesgue measure, and in finite dimension this is equivalent to be Gaussian regular, implying in particular that $\ttG\in \PP^{\mathsf{sgr}_p}(\PP_p(\rmB))$.
	\end{proof}

	\normalcolor

	\appendix

	\section{Appendix}
	In this appendix we collect some measurability results for subsets of probability measures used along the paper.
	
	\begin{lemma}\label{lemma: meas diff det}
		Let $\X_1$ and $\X_2$ be two Polish spaces. The sets $\PPdiff(\X_1)$ and $\PPdet(\X_1\times \X_2)$ defined in \eqref{eq: diff&det} are Borel measurable in their respective spaces.
	\end{lemma}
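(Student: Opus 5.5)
We need to show that $\PPdiff(\X_1)$ is Borel in $\PP(\X_1)$ and that $\PPdet(\X_1\times\X_2)$ is Borel in $\PP(\X_1\times\X_2)$. We treat the two sets separately.

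\textbf{Atomless measures.} Fix a complete metric $\d$ on $\X_1$. The function
\[
m(\mu):=\sup_{x\in\X_1}\mu(\{x\})
\]
is upper semicontinuous on $\PP(\X_1)$, and $\PPdiff(\X_1)=\{m=0\}$. A cleaner route, avoiding the supremum, is to write
\[
\PPdiff(\X_1)=\bigcap_{k}\bigcap_{j}\bigcup_{\delta\in\mathbb{Q}_+}\Big\{\mu:\ \sup_{x\in K_j}\mu(\bar B(x,\delta))<1/k\Big\},
\]
where $(K_j)$ is an exhausting sequence. Tightness, however, is not uniform in $\mu$, so we instead use
\[
\{\mu: m(\mu)\ge 1/k\}=\{\mu:\ \exists x,\ \mu(\{x\})\ge 1/k\}.
\]
This set is the projection of the set
\[
\{(x,\mu):\ \mu(\{x\})\ge 1/k\}=\bigcap_n\{(x,\mu):\ \mu(\bar B(x,1/n))\ge 1/k\}.
\]
Each of the sets on the right is closed, because $(x,\mu)\mapsto\mu(\bar B(x,r))$ is upper semicontinuous by Portmanteau. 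Hence the intersection is closed.

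Its projection is analytic. To obtain Borel measurability, note that the $x$-section over any $\mu$ is finite, with at most $k$ points. The projection of a Borel set with countable (here finite) sections is Borel by the Lusin--Novikov theorem. So $\PPdiff(\X_1)=\bigcap_k\{m<1/k\}$ is Borel.

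\textbf{Deterministic couplings.} We characterise them through a disintegration variance functional. Fix a bounded metric $\d_2\le 1$ on $\X_2$ and set
\[
V(\pi):=\int\!\!\int \d_2(y,y')\,d\pi_{x}(y)\,d\pi_x(y')\,d\mu_1(x),
\]
where $\mu_1=\operatorname{pr}^1_\sharp\pi$ and $\{\pi_x\}$ is the disintegration of $\pi$ with respect to $\mu_1$. By Lemma \ref{lemma: conc on graph}, $\pi\in\PPdet$ if and only if $\pi_x$ is a Dirac mass for $\mu_1$-a.e.\ $x$, that is, if and only if $V(\pi)=0$. It therefore suffices to show that $V$ is Borel.

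Fix a countable algebra generated by a sequence of finite Borel partitions $\mathcal{P}_n=\{A^n_1,A^n_2,\dots\}$ of $\X_1$ that separate points, with mesh tending to zero. Define
\[
V_n(\pi):=\sum_i \frac{1}{\pi(A^n_i\times\X_2)}\int_{(A^n_i\times\X_2)^2}\d_2(y,y')\,d\pi(x,y)\,d\pi(x',y').
\]
Each summand is Borel in $\pi$, since $\pi\mapsto\pi\otimes\pi$ is continuous and $\pi\mapsto\int f\,d\pi$ is Borel for bounded Borel $f$.

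The key claim is that $V_n(\pi)\to V(\pi)$. Indeed, $V_n(\pi)=\int \Phi(\mathbb{E}[\pi_x\mid\mathcal{P}_n])\,d\mu_1$, where $\Phi(\nu)=\iint\d_2\,d\nu\,d\nu$. By martingale convergence, $\mathbb{E}[\pi_x\mid\mathcal{P}_n]\to\pi_x$ narrowly for $\mu_1$-a.e.\ $x$; this can be checked on a countable convergence-determining class. Since $\Phi$ is bounded and continuous, dominated convergence gives the limit. As a pointwise limit of Borel functions, $V$ is Borel, and so is $\PPdet=V^{-1}(0)$.

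\textbf{Main obstacle.} The delicate step is this martingale convergence applied simultaneously to all $\pi$, without any selection. The functional form handles this, because for each fixed $\pi$ the filtration is the same fixed one generated by $(\mathcal{P}_n)$. Since the partitions generate $\mathcal{B}(\X_1)$, the limit is the full disintegration.
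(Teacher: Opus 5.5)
Your proof is correct, but it takes a different route from the paper on both halves.

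\textbf{Atomless measures.} The paper uses a one-line observation: $\mu\in\PPdiff(\X_1)$ iff $(\mu\otimes\mu)(\Delta)=0$, where $\Delta$ is the diagonal of $\X_1\times\X_1$ (since $(\mu\otimes\mu)(\Delta)=\sum_x\mu(\{x\})^2$). The map $\mu\mapsto\mu\otimes\mu$ is continuous, and $\sigma\mapsto\sigma(\Delta)$ is Borel; it is even upper semicontinuous, so the set is actually $G_\delta$. Your argument (projection of a closed set with finite sections, then Lusin--Novikov) is valid, but it uses much heavier descriptive set theory than needed. Two further remarks:
\begin{itemize}
\item The upper semicontinuity of $m$ that you state in your first line (a short tightness argument) would by itself already finish this half.
\item The product-measure trick you use in the second half ($\Phi(\nu)=0$ iff $\nu$ is a Dirac mass), applied to $\mu$ itself, is exactly the paper's argument.
\end{itemize}
The abandoned ``cleaner route'' paragraph should be removed from the write-up.

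\textbf{Deterministic couplings.} For $\PPdet(\X_1\times\X_2)$ the paper gives no proof and simply cites \cite{LS25}. Your argument is self-contained and correct: the conditional-variance functional $V$ is the pointwise limit of the Borel functionals $V_n$, by L\'evy's upward martingale theorem along a filtration that does not depend on $\pi$. It also has a bonus: it exhibits $\PPdet$ as the zero set of an explicit Borel quantity measuring non-determinism. Three small repairs are needed:
\begin{itemize}
\item The partitions $\mathcal P_n$ must be nested, so that $\sigma(\mathcal P_n)$ is increasing.
\item Finite partitions with mesh tending to zero do not exist when $\X_1$ is not totally bounded. Either allow countable partitions or drop the mesh condition; for example, the finite partitions generated by the first $n$ elements of a countable base already generate $\mathcal B(\X_1)$.
\item Fix the convention $0/0=0$ for cells of zero mass in $V_n$.
\end{itemize}
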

	
	\begin{proof}
		The second claim has been proved in \cite{LS25}, so we focus on the first one. Recall that $\mu\in \PPdiff$ if and only if $\mu\otimes \mu (\Delta) = 0$, where $\Delta := \{(x_1,x_1)\in \X_1\times \X_1 : x_1 \in \X_1 \}$. Now, the map $\operatorname{Prod}\colon\PP(\X_1) \to \PP(\X_1\times \X_1)$ mapping $\mu$ to $\mu\otimes \mu$ is continuous. Then, $\PPdiff(\X_1) = \operatorname{Prod}^{-1}(\{\sigma \in \PP(\X_1\times \X_1) : \sigma(\Delta) = 0\})$, which is measurable thanks to \cite[Lemma D.1]{Pinzi-Savare25}.
	\end{proof}
	
	
	\begin{lemma}
		Let $\rmB$ be a separable Banach space. Then:
		\begin{enumerate}
			\item $\PP^{\mathsf g}(\rmB)$ is Borel measurable;
			\item if $\rmB$ is finite dimensional, then $\PP^{\mathsf{gr}}(\rmB)$ is Borel measurable.
		\end{enumerate}
	\end{lemma}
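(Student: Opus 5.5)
For the first claim I will write $\PP^{\mathsf g}(\rmB)$ as a countable intersection of Borel sets. Since $\rmB$ is separable, the unit ball of $\rmB^*$ is weak$^*$-separable, and I fix a countable weak$^*$-dense set $\{z_k\}_{k\in\N}\subset \rmB^*\setminus\{0\}$. Linear combinations of the $z_k$ with rational coefficients form a countable set $D$, and each $z\in\rmB^*$ is a weak$^*$ limit of a sequence in $D$ (a bounded sequence, by metrizability of the dual ball). The set of Gaussian measures on $\R$ (including Dirac masses) is closed in $\PP(\R)$: this can be checked through characteristic functions, or through the closedness of the family $\{N(m,\sigma^2):m\in\R,\sigma\ge0\}$ under narrow limits. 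The set of non-degenerate Gaussians is therefore Borel, being this closed set minus the closed set of Dirac masses. For fixed $z$ the map $\mu\mapsto z_\sharp\mu$ is continuous, so
\[
\mathcal G_D:=\bigcap_{z\in D\setminus\{0\}}\{\mu : z_\sharp\mu \text{ is a non-degenerate Gaussian}\}
\]
is Borel.

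The main step is to show that $\mathcal G_D=\PP^{\mathsf g}(\rmB)$. The inclusion $\supseteq$ is immediate. For $\subseteq$, take $\mu\in\mathcal G_D$; I first prove that $\mu$ is Gaussian, i.e.\ that every $z_\sharp\mu$ is Gaussian. For $z\in\rmB^*$ pick $z_n\in D$ with $z_n\to z$ weak$^*$ and $\sup_n\|z_n\|<\infty$. Then $z_n(x)\to z(x)$ pointwise, so $(z_n)_\sharp\mu\to z_\sharp\mu$ narrowly by dominated convergence applied to the characteristic functions. Each $(z_n)_\sharp\mu$ is Gaussian, and the closedness noted above shows that $z_\sharp\mu$ is Gaussian, possibly degenerate. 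Hence $\mu$ is a centred-or-shifted Gaussian measure on $\rmB$ with covariance operator $Q$, and it remains to rule out $Q z=0$ (i.e.\ $z$ degenerate) for some $z\neq0$. Here I expect the obstacle: a weak$^*$ approximation from $D$ does not preserve non-degeneracy. I would instead use the characterization that a Gaussian $\mu$ is non-degenerate if and only if its support is all of $\rmB$, equivalently if and only if the closed support of $\mu$ is not contained in a proper closed affine hyperplane. The set $\{\mu:\operatorname{supp}\mu=\rmB\}=\bigcap_j\{\mu:\mu(U_j)>0\}$, over a countable base $\{U_j\}$, is a $G_\delta$ in the narrow topology. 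I therefore plan to define the Borel set
\[
\PP^{\mathsf g}(\rmB)=\{\mu \text{ Gaussian}\}\cap\{\operatorname{supp}\mu=\rmB\},
\]
with the Gaussian set Borel (indeed closed) by the approximation argument above applied with all $z\in D$. Under this route the non-degeneracy discussion of $\mathcal G_D$ becomes unnecessary.

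For the second claim, with $\rmB=\R^d$, the Gaussian regular measures are exactly those absolutely continuous with respect to $\mathcal L^d$. I will express this countably. By Lebesgue differentiation, $\mu\ll\mathcal L^d$ if and only if for every $\varepsilon=1/m$ there is $\delta=1/n$ such that $\mu(K)\le\varepsilon$ for every finite union $K$ of closed dyadic cubes with $\mathcal L^d(K)<\delta$. Since $\mu\mapsto\mu(K)$ is upper semicontinuous for closed $K$, each condition defines a closed set, so $\PP^{\mathsf{gr}}(\R^d)$ is an $F_{\sigma\delta}$ set and hence Borel.
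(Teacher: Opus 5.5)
For the first claim, your final route is the paper's proof. You test Gaussianity on a countable weak$^*$-dense family, using that Gaussian laws on $\R$ (Dirac masses included) form a narrowly closed set. You intersect with the $G_\delta$ set $\bigcap_j\{\mu:\mu(U_j)>0\}$. Finally you use that a Gaussian measure is non-degenerate iff it has full support. Dropping $\mathcal G_D$ was necessary, not just convenient. With $D=\mathbb{Q}^2\subset(\R^2)^*$, the law of $t(\sqrt2,-1)$ with $t\sim N(0,1)$ has non-degenerate projections along every nonzero $w\in D$, yet it is degenerate. Your bounded-sequence and dominated-convergence argument also supplies a detail the paper leaves implicit.

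For the second claim, the paper only says to adapt \cite[Proposition 2.6]{PS25convex}, so your self-contained argument is a different route. Your characterization of absolute continuity is correct. Its justification is the $\varepsilon$--$\delta$ form of absolute continuity for finite measures plus outer regularity of $\mathcal L^d$, writing an open set as an increasing union of finite unions of closed dyadic cubes contained in it. It is not Lebesgue differentiation.

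The topological step, however, is wrong. For closed $K$, upper semicontinuity of $\mu\mapsto\mu(K)$ makes the superlevel sets $\{\mu(K)\ge c\}$ closed, not the sublevel sets. For instance, with $K=[0,1]^d$ and $\mu_n:=\delta_{(-1/n,0,\dots,0)}$ you have $\mu_n(K)=0$ and $\mu_n\to\delta_0$, but $\delta_0(K)=1$. The set $\{\mu(K)\le\varepsilon\}=\bigcap_k\{\mu(K)<\varepsilon+1/k\}$ is only $G_\delta$.

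The Borel conclusion survives, because the family of finite unions of dyadic cubes is countable. The $F_{\sigma\delta}$ count, however, does not follow. If you want closed sets, test instead on finite unions of open boxes with rational vertices. Lower semicontinuity then makes $\{\mu(U)\le\varepsilon\}$ closed, and the same outer-regularity argument goes through. Open dyadic cubes would not work, since they never charge the dyadic hyperplanes.
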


	\begin{proof}
		Regarding the first claim, notice that $\mu \in \PP^{\mathsf{g}}(\rmB)$ if and only if $z_\sharp \mu$ is Gaussian in $\R$ for all $z\in \rmB^*$ and $\mu$ has full support. These two properties can be checked by countably many Borel operations: indeed, it suffices to check that $(z_k)_\sharp \mu$ is Gaussian for all $k\in \N$, where $\{z_k:k\in \N\}\subset \rmB^*$ is weakly* dense. On the other hand, take a countable basis of open sets $\{U_j : j\in \N\}$ for the topology of $\rmB$, then $\mu$ has full support if and only if $\mu(U_j)>0$ for all $j\in \N$. Finally
		\[\PP^{\mathsf g}(\rmB) = \bigcap_{j,k \in \N} \big\{\mu\in \PP(\rmB) : (z_{k})_\sharp \mu \text{ is Gaussian in }\R, \ \mu(U_j) >0 \big\}, \]
		and we conclude since (possibly degenerate) Gaussian measures in $\R$ are closed by narrow convergence, and the maps $\mu \mapsto (z_k)_\sharp \mu$ and $\mu \mapsto \mu(U_j)$ are Borel measurable.
		
		The measurability of Gaussian regular measures in finite dimension follows adapting the same argument of \cite[Proposition 2.6]{PS25convex}.
	\end{proof}

	\begin{lemma}\label{lemma: meas tr}
		The set $\PP^{\mathsf{tr}}_p(\rmB) \subset \PP_p(\rmB)$ is co-Souslin, i.e. $\PP_p(\rmB) \setminus \PP^{\mathsf{tr}}_p(\rmB)$ is Souslin. In particular, it is universally measurable. 
		Moreover, if $\rmB$ is finite dimensional, then $\PP_p^{\mathsf{tr}}(\rmB)$ is also a Borel set.
	\end{lemma}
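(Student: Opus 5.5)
We will show that the complement $\PP_p(\rmB)\setminus\PPtr(\rmB)$ is the projection of a Borel subset of a Polish space, hence Souslin. Then $\PPtr(\rmB)$ is co-Souslin and therefore universally measurable, since Souslin sets are universally measurable.

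By definition, $\mu_1\notin\PPtr(\rmB)$ if and only if there exist $\mu_2\in\PP_p(\rmB)$ and $\pi\in\Gamma_{o,p}(\mu_1,\mu_2)$ with $\pi\notin\PPdet(\rmB\times\rmB)$. Consider the set
\[
\mathcal{S}:=\Big\{\pi\in\PP_p(\rmB\times\rmB)\ :\ \int\|x_1-x_2\|^p\,d\pi=\sfw_p^p\big(\operatorname{pr}^1_\sharp\pi,\operatorname{pr}^2_\sharp\pi\big),\ \pi\notin\PPdet(\rmB\times\rmB)\Big\}.
\]
It is Borel in the Polish space $(\PP_p(\rmB\times\rmB),\sfw_p)$, for three reasons. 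The map $\pi\mapsto\int\|x_1-x_2\|^p\,d\pi$ is continuous for $\sfw_p$. The map $\pi\mapsto\sfw_p^p(\operatorname{pr}^1_\sharp\pi,\operatorname{pr}^2_\sharp\pi)$ is continuous as well. Finally, $\PPdet$ is Borel by Lemma \ref{lemma: meas diff det}, and its trace on $\PP_p$ is Borel because the Borel $\sigma$-algebra of $\PP_p$ for $\sfw_p$ coincides with the trace of the narrow one. We then have $\PP_p(\rmB)\setminus\PPtr(\rmB)=\operatorname{pr}^1_\sharp(\mathcal{S})$, which is the continuous image of a Borel set and hence Souslin.

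For the finite-dimensional claim, take $\rmB=\R^d$ with a strictly convex norm, so that Theorem \ref{thm: Monge Banach} applies. The plan is to show that the projection above has a Borel-measurable description, for instance through a Lusin-type theorem on projections with $\sigma$-compact sections (Arsenin--Kunugui), or through an alternative characterization of $\PPtr$. The natural candidate comes from Theorem \ref{thm: Monge Banach} together with the fact that, in finite dimension, the potentials are locally Lipschitz and (for $p>1$, strictly convex norm) semiconcave away from the diagonal. These suggest that $\mu_1\in\PPtr(\R^d)$ if and only if $\mu_1$ gives zero mass to the relevant exceptional sets. Those sets are the non-differentiability sets of $\ttc$-concave functions, which are contained in countable unions of Lipschitz (c-c) hypersurfaces. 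One could then try to express this condition by countably many Borel operations, in the spirit of \cite[Proposition 2.6]{PS25convex}, for example by testing against a countable dense family of Lipschitz graphs or of finitely supported targets $\mu_2$.

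A cleaner route I would try first is a reduction to countably many targets. Take a countable $\sfw_p$-dense family $\{\nu_k\}$ of finitely supported measures. The goal is to show that $\mu_1\in\PPtr$ if and only if, for every $k$, the optimal plan to $\nu_k$ is unique and deterministic, using stability of optimal plans and the semi-discrete structure. Each condition ``unique and deterministic plan to $\nu_k$'' is Borel in $\mu_1$. It is in fact the complement of the projection of a set with compact sections, namely the compact set of optimal plans, so Arsenin--Kunugui applies.

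The main obstacle is this finite-dimensional Borel claim. The co-Souslin part is routine. The equivalence with countably many tests, or the verification of the $\sigma$-compact-section hypothesis for the full projection, is delicate: $\Gamma_{o,p}(\mu_1,\mu_2)$ is compact, but the union over $\mu_2$ is not, and non-determinism is not preserved under limits. I would likely need to invoke the finite-dimensional structure (McCann/Gangbo--McCann-type characterization via null sets of $(d-1)$-rectifiable type) to conclude.
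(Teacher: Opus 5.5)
Your argument for the co-Souslin part is correct, and slightly more economical than the paper's. The paper takes the set $\mathcal{M}$ of pairs $(\mu_1,\mu_2)$ all of whose optimal plans are deterministic, imports its Borel measurability from \cite[Lemmas 4.2 \& 4.3]{beiglbock2025brenier}, and writes the complement of $\PP^{\mathsf{tr}}_p(\rmB)$ as $\operatorname{pr}^1(\mathcal{M}^c)$. You instead project the set $\mathcal{S}$ of non-deterministic optimal plans directly through $\operatorname{pr}^1_\sharp$. Since $\mathcal{M}^c$ is itself the image of your $\mathcal{S}$ under $(\operatorname{pr}^1_\sharp,\operatorname{pr}^2_\sharp)$, Borel measurability of $\mathcal{M}$ is more than this statement needs. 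Analyticity suffices, and you obtain it from nothing but Lemma~\ref{lemma: meas diff det} and the $\sfw_p$-closedness of the optimality condition.

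The finite-dimensional Borel claim, however, is not proved: you list strategies but carry none of them out. The paper does not derive this part itself; it imports it from \cite[Theorem 4.5]{beiglbock2025brenier}. Moreover, both routes you sketch fail as stated.

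The reduction to a countable dense family $\{\nu_k\}$ of finitely supported targets cannot work for any choice of family. Take $p=2$ and the Euclidean norm on $\R^2$. The directions $y_j-y_i$ between atoms of the $\nu_k$ form a countable set, so pick a unit vector $n$ parallel to none of them, and let $\mu_1$ be uniform on a segment of a line $L$ with normal $n$. For each $k$, the Laguerre cell boundaries lie on lines with normals $y_j-y_i$, and each such line meets $L$ in at most one point. Hence every optimal plan from $\mu_1$ to $\nu_k$ is deterministic, and therefore unique. Yet for $\nu=\frac12\delta_{a+n}+\frac12\delta_{a-n}$ with $a\in L$, the cost $|x-(a\pm n)|^2=|x-a|^2+1$ does not depend on the chosen atom for $x\in L$. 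So every coupling is optimal, $\mu_1\otimes\nu$ included, and $\mu_1\notin\PP^{\mathsf{tr}}_p(\R^2)$. Failure of transport regularity can thus be invisible to every member of a given countable family of finitely supported targets.

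The other route also breaks down: the semiconcavity you invoke fails for general strictly convex norms. For $\ell^q$ with $1<q<2$, or for a strictly convex norm whose unit ball has corners, $\|\cdot\|^p$ is not semiconcave away from the origin. So the c-c hypersurface description of the non-differentiability sets is not available in the generality of the statement. To close the gap you need either a genuinely different finite-dimensional argument or the cited theorem, as the paper does.
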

	
	\begin{proof}
		Thanks to \cite[Lemmas 4.2 \& 4.3]{beiglbock2025brenier}, the set $\mathcal{M}:= \{(\mu_1,\mu_2) \in \PP_p(\rmB) \times \PP_p(\rmB) : \Gamma_{o,p}(\mu_1,\mu_2) \subset \PPdet(\X_1\times \X_2)\}$ is Borel measurable. Note that $\mu_1 \in \PP_p(\rmB) \setminus \PP_p^{\mathsf{tr}}(\rmB)$ if and only if there exists $\mu_2 \in \PP_p(\rmB)$ such that $(\mu_1,\mu_2) \in \mathcal{M}^c,$ so that $\PP_p(\rmB) \setminus \PP_p^{\mathsf{tr}}(\rmB) = \operatorname{pr}^1(\mathcal{M}^c)$ concluding the proof. 
		The last part of the statement is a consequence of \cite[Theorem 4.5]{beiglbock2025brenier}.
	\end{proof}
	
	\printbibliography
	
\end{document}